\documentclass[12pt, reqno]{amsart}
\usepackage{amssymb, amsthm, amsmath, amsfonts}
\usepackage{array, epsfig}
\usepackage{bbm, enumerate, textcomp}
\usepackage{color}
\usepackage{hyperref,verbatim}
\usepackage{lineno}

\setlength{\oddsidemargin}{-0.0in} \setlength{\textwidth}{6.5in}
\setlength{\topmargin}{-0.0in} \setlength{\textheight}{8.4in} \evensidemargin
\oddsidemargin
\parindent=8mm


\newcommand*\patchAmsMathEnvironmentForLineno[1]{%
  \expandafter\let\csname old#1\expandafter\endcsname\csname #1\endcsname
  \expandafter\let\csname oldend#1\expandafter\endcsname\csname end#1\endcsname
  \renewenvironment{#1}%
     {\linenomath\csname old#1\endcsname}%
     {\csname oldend#1\endcsname\endlinenomath}}%
\newcommand*\patchBothAmsMathEnvironmentsForLineno[1]{%
  \patchAmsMathEnvironmentForLineno{#1}%
  \patchAmsMathEnvironmentForLineno{#1*}}%
\AtBeginDocument{%
\patchBothAmsMathEnvironmentsForLineno{equation}%
\patchBothAmsMathEnvironmentsForLineno{align}%
\patchBothAmsMathEnvironmentsForLineno{flalign}%
\patchBothAmsMathEnvironmentsForLineno{alignat}%
\patchBothAmsMathEnvironmentsForLineno{gather}%
\patchBothAmsMathEnvironmentsForLineno{multline}%
}

\begin{document}

\newcommand{\be}{\begin{equation}}
\newcommand{\ee}{\end{equation}}
\newcommand{\bea}{\begin{eqnarray}}
\newcommand{\eea}{\end{eqnarray}}
\newcommand{\beaa}{\begin{eqnarray*}}
\newcommand{\eeaa}{\end{eqnarray*}}
\newcommand{\Var}{\mathop{\mathrm{Var}}\nolimits}

\renewcommand{\proofname}{\bf Proof}
\newtheorem{conj}{Conjecture}
\newtheorem*{cor*}{Corollary}
\newtheorem{cor}{Corollary}[section]
\newtheorem{proposition}{Proposition}[section]
\newtheorem{lemma}{Lemma}[section]

\newtheorem{taggedlemmax}{Lemma}
\newenvironment{taggedlemma}[1]
 {\renewcommand\thetaggedlemmax{#1}\taggedlemmax}
 {\endtaggedlemmax}
 
\newtheorem*{crit}{Conditions for conservativity}
\newtheorem*{Hopf}{Hopf's ratio ergodic theorem}
\newtheorem{theorem}{Theorem}[section]

\theoremstyle{remark}
\newtheorem{remark}{Remark}[section]

\newfont{\zapf}{pzcmi}

\def\PP{\mathrm{P}}
\def\EE{\mathrm{E}}

\def\R{\mathbb{R}}
\def\Q{\mathbb{Q}}
\def\Z{\mathbb{Z}}
\def\N{\mathbb{N}}
\def\E{\mathbb{E}}
\def\P{\mathbb{P}}
\def\V{\mathbb{D}}
\def\ZZ{\mathcal{Z}}
\def\XX{\mathcal{X}}
\def\BB{\mathcal{B}}
\def\I{\mathbbm{1}}
\newcommand{\D}{\hbox{\zapf D}}
\newcommand{\eps}{\varepsilon}
\newcommand{\sgn}{\mathop{\mathrm{sign}}\nolimits}
\newcommand{\eqdistr}{\stackrel{d}{=}}
\newcommand{\supp}{\mathop{\mathrm{supp}}\nolimits}
\newcommand{\lin}{\mathop{\mathrm{lin}}\nolimits}
\newcommand{\Int}{\mathop{\mathrm{Int}}\nolimits}
\newcommand{\Cl}{\mathop{\mathrm{Cl}}\nolimits}
\newcommand{\Inv}{\mathop{\mathrm{Inv}}\nolimits}
\newcommand{\Sym}{\text{\rm Sym}}
\newcommand{\dist}{\operatorname{dist}}
\newcommand{\tc}{\textcolor{red}}
\renewcommand{\Re}{\operatorname{Re}}
\renewcommand{\mod}{\operatorname{mod}}
\newcommand{\Mod}[1]{\ \mathrm{mod}\ #1}

\newcommand{\overbar}[1]{\mkern 1.5mu\overline{\mkern-3mu#1\mkern-3mu}\mkern 1.5mu}
\newcommand{\todistr}{\overset{d}{\underset{n\to\infty}\longrightarrow}}
\newcommand{\toprobab}{\overset{\P}{\underset{n\to\infty}\longrightarrow}}

\title[Stationary entrance Markov chains, inducing, and level-crossings]{Stationary entrance Markov chains, \\inducing, and level-crossings of random walks}
\author{Aleksandar Mijatovi\'c} 
\address{Aleksandar Mijatovi\'c, Department of Statistcis, University of Warwick \& The Alan Turing Institute}
\email{a.mijatovic@warwick.ac.uk}

\author{Vladislav Vysotsky}
\address{Vladislav Vysotsky, University of Sussex,
Pevensey 2 Building,
Falmer Campus,
Brighton BN1 9QH,
United Kingdom
and 
St.\ Petersburg Department of Steklov Mathematical Institute,
Fontanka~27,
191011 St.\ Petersburg,
Russia}
\email{v.vysotskiy@sussex.ac.uk}

\subjclass[2010]{Primary: 60J10, 60G50, 37A50; secondary: 60J55, 60G10, 60G40, 60F05, 28D05}
\keywords{Level crossing, random walk, overshoot, undershoot, local time of random walk, invariant measure, stationary distribution, entrance Markov chain, exit Markov chain, induced Markov chain, embedded Markov chain, inducing, infinite ergodic theory}

\thanks{AM is supported by the EPSRC grant EP/P003818/1 and a Fellowship at The Alan Turing Institute, sponsored by the Programme on Data-Centric Engineering funded by Lloyd's Register Foundation. VV is supported in part by the RFBR Grant 19-01-00356.}

\begin{abstract}
For a Markov chain $Y$ with values in a Polish space, consider the {\it entrance Markov chain} obtained by sampling $Y$ at the moments when it enters a fixed set $A$ from its complement $A^c$. Similarly, consider the {\it exit Markov chain}, obtained by sampling $Y$ at the exit times from $A^c$ to $A$. This paper provides a framework for analysing
invariant measures of these two types of Markov chains in the case when the
initial chain $Y$ has a known $\sigma$-finite invariant measure. Under 
certain recurrence-type assumptions ($Y$ can be {\it transient}), we give explicit formulas for invariant measures of these chains. Then we study their uniqueness and ergodicity   assuming that $Y$ is topologically recurrent, irreducible, and weak Feller. 

Our approach is based on the technique of {\it inducing} from infinite ergodic theory. This also yields, in a natural way, the versions of the results above (provided in the paper) for the classical {\it induced Markov chains}.

We give applications to random walks in $\R^d$, which we regard as
``stationary''  Markov chains started under the Lebesgue measure. We are mostly interested in dimension one, where we study the Markov chain of overshoots above the zero level of a random walk that oscillates between $-\infty$ and $+\infty$. We show that this chain is ergodic, and use this result to prove a central limit theorem for the number of level crossings for random walks with zero mean and finite variance of increments.  
\end{abstract}

\maketitle

\tableofcontents

\section{Introduction}
\subsection{Introduction and description of the main results} \label{sec:Intro} Let $S= (S_n)_{n \ge 0}$ with $S_n= S_0 + X_1+ \ldots + X_n$ be a non-degenerate random walk in $\R^d$, where $d \ge 1$, with independent identically distributed (i.i.d.) increments $X_1, X_2, \ldots$ and the starting point $S_0$ that is a random vector independent of the increments. Define the {\it state space} of the walk $S$, denoted by $\ZZ$, as the minimal topologically closed subgroup of $(\R^d, +)$ containing the topological support of the distribution of $X_1$. Assume throughout that $S_0 \in \ZZ$. Let $\lambda$ be the normalized Haar measure on $(\ZZ, +)$; in dimension one this means that $\lambda([0,x))=x$ for positive $x \in \ZZ$.

Consider the case $d=1$ and assume that either $\E X_1=0$ or $\E X_1$ does not exist. This is equivalent to assuming that the random walk $S$ {\it oscillates}, that is $\limsup S_n = -\liminf S_n = +\infty$ a.s.\ as $n \to \infty$ (see Section~\ref{Sec: Application to RWs}). In particular, $S$ can be transient. Define the {\it crossing times} of the zero level by $\mathcal{T}_0:=0$ and
\[
\mathcal{T}_{n}:= \inf\{k>\mathcal{T}_{n-1}: S_{k-1} < 0, S_k \ge 0 \text{ or } S_{k-1} \ge 0, S_k <0\}, \qquad n \in\N,
\]
and let 
\begin{equation} \label{eq: crossing chain}
\mathcal{O}_n:=S_{\mathcal{T}_n}, \qquad \mathcal{U}_n:= S_{\mathcal{T}_n - 1}, \qquad n \in \N 
\end{equation}
be the corresponding {\it overshoots} and {\it undershoots}; put $\mathcal{O}_0=\mathcal{U}_0:=S_0$. It is easy to see, using that the $\mathcal{T}_n$'s are stopping times, that the sequence $\mathcal{O}:=(\mathcal{O}_n)_{n \ge 0}$ is a Markov chain. The sequence   $\mathcal{U}:=(\mathcal{U}_n)_{n \ge 0}$ also is a Markov chain but this is far less intuitive (cf.~Lemma~\ref{lem: Markov}). 

This paper was motivated by our interest in stationarity and stability properties of the Markov chain of overshoots $\mathcal{O}$. In our companion paper~\cite{MijatovicVysotsky} we essentially showed that this chain has {\it an} invariant measure 
\begin{equation} \label{eq: pi}
\pi(dx):= \frac{c_1}{2} \bigl[ \I_{[0, \infty)}(x) \P(X_1 > x) + \I_{(-\infty,0)}(x) \P(X_1 \le x) \bigr] \lambda(dx), \qquad x \in \ZZ,
\end{equation}
where either $c_1:=2/\E|X_1|$ if $\E |X_1|<\infty$ (and $d=1$) or $c_1:=1$ otherwise. Note that $\pi$ is finite  if and only if $\E |X_1| < \infty$, in which case by the assumption of oscillation we have $\E X_1 =0$ and so $\pi$ is a probability.  

We found this measure in~\cite{MijatovicVysotsky}  computing it in a special case using an ergodic averaging argument and  assuming that an invariant measure exists. Then we proved invariance of $\pi$ in general case using quite interesting but yet complicated and apparently ad-hoc arguments based on time-reversibility. The same approach of computing or even guessing and then proving invariance was used in a number of other works (e.g.~\cite{Borovkov, Knight, PeigneWoessUnpub}, commented below) concerning stability of certain Markov chains. In all these cases this neither gives insights on the form of invariant measures nor shows how to find them. In particular, this does not explain why the Haar measure, which was introduced in ~\cite{MijatovicVysotsky} {\it only} to simplify the notation, appears in formula~\eqref{eq: pi}. Moreover, this approach does not allow one to prove uniqueness.  

A universal approach for proving uniqueness of the invariant measure is in establishing some type of distributional convergence of $\mathcal{O}_n$ towards $\pi$ as $n \to \infty$ (when $\E X_1 = 0$) starting $S$ from every $x \in \ZZ$. In this paper we set this delicate problem aside. It is considered, under additional smoothness assumptions on the distribution of $X_1$, in the companion paper~\cite{MijatovicVysotsky} using the methods that are entirely different from the ones used here. However, we were not able to establish convergence in full generality. The main difficulty is that no standard criteria of convergence apply to~$\mathcal{O}$: in particular, this chain in general is neither weak Feller (Remark~\ref{rem: not Feller}) nor $\psi$-irreducible. By the same reasoning, a priori it is unclear if the chain $\mathcal O$ has a stationary distribution regardless of moment assumptions on $S$.

This paper presents a new approach which allows us to {\it find}
invariant measures and obtain their uniqueness and ergodicity in much more
general context than level-crossings of one-dimensional random walks. 

In order to proceed to this general setting, note that the chain of overshoots has periodic structure since its values at consecutive steps have different signs. Therefore, it suffices to consider the non-negative Markov chain $O=(O_n)_{n\ge 0}$ of {\it overshoots at up-crossings} defined by $O_n:= \mathcal{O}_{2n - \I(S_0<0)}$ for $n \ge 1$ and starting at $O_0:=S_0$, and the analogous negative chain $O^\downarrow$ of {\it overshoots at down-crossings}. Similarly, define the negative-valued chain $U=(U_n)_{n \ge 0}$ of {\it undershoots at up-crossings} given by $U_n:= \mathcal{U}_{2n - \I(S_0<0)}$ for $n \ge 1$ and $ U_0:=S_0$. This chain played an important role in the proof of equality \eqref{eq: pi} in~\cite{MijatovicVysotsky}.

Observe that the
Markov chain of overshoots $O$ at up-crossings above the zero level is obtained by
sampling the one-dimensional  random walk $S$ at the moments it enters the set $[0,
\infty)$ from $(-\infty, 0)$. 
Similarly, for any Markov chain $Y$ we can consider the {\it entrance Markov chain}, denoted by $Y^{\to A}$, constructed by sampling $Y$ at the moments of  entry into an arbitrary fixed set $A$ from its complement $A^c$. We also consider the {\it exit Markov chain}, denoted by $Y^{A^c \to}$, obtained by sampling $Y$ at the exit times 
from $A^c$ to $A$; the Markov property of this sequence is not obvious and we refer to Lemma~\ref{lem: Markov} for its proof. In this notation, we have $O=S^{\to [0, \infty)}$ and $U=S^{(-\infty,0) \to}$. Note that alternating the values of the entrance and exit chains gives the {\it Markov chain of overshoots over the boundary} $\partial A$ analogous to $\mathcal O$ but we will not give it any consideration. 


We will show (Theorems~\ref{thm: duality} and \ref{thm: inducing MC}) that for any Borel set $A$ in a Polish space $\XX$ and any time-homogeneous Markov chain $Y$ that takes values in $\XX$ and has an  invariant $\sigma$-finite measure~$\mu$, the entrance chain $Y^{\to A}$ and the exit chain $Y^{A^c \to}$ have respective invariant  measures 
\begin{equation} \label{eq: measures def}
\mu_A^{entr}(dx)=\P_x(\hat{Y}_1 \in A^c) \mu(dx) \text{ on }A, \qquad 
\mu_{A^c}^{exit}(dx)= \P_x(Y_1 \in A) \mu(dx) \text{ on } A^c,
\end{equation}
where $\hat Y$ is a Markov chain {\it dual} to $Y$ with respect to $\mu$ and $\P_x(Y_0=\hat Y_0 = x)=1$, assuming that 
$Y$ visits both sets $A$ and $A^c$ infinitely often $\P_x$-a.s.\ for $\mu_{A^c}^{exit}$-a.e.~$x$, and the same holds true for $\hat Y$ (in particular, $A$ and $A^c$ are {\it recurrent sets} for $Y$ and $\hat Y$). We will also show that the chains $Y^{\to A}$ and $Y^{A^c \to}$ are recurrent and ergodic if so is $Y$ when ``started'' under $\mu$.

The assumptions above are satisfied (Remark~\ref{rem: Polish}.b) if $Y$ is recurrent started under $\mu$ and it can get from $A^c$ to $A$. We stress that this not a  requirement, that is $Y$ {\it can be  transient}.

It turns out that the exit chain $Y^{A^c \to}$ of $Y$ from $A^c$ to $A$ is  dual to the entrance chain $\hat Y^{\to A^c}$ of  $\hat Y$ into $A^c$ from $A$  with respect to the measure $\mu_{A^c}^{exit}$ (Theorem~\ref{thm: duality}). This immediately implies that  $\mu_{A^c}^{exit}$  is invariant for the exit chain  $Y^{A^c \to}$. This in turn yields invariance of $\mu_A^{entr}$ for the entrance chain $Y^{\to A}$ since formulas~\eqref{eq: measures def} are antisymmetric  in the sense that their right-hand sides interchange if we swap the chain $Y$ and the set $A$ with the dual chain $\hat Y$ and the complement set $A^c$. Therefore, we can concentrate on entrance Markov chains. 

Our  results apply for two very wide classes of Markov chains with a known invariant measure, namely {\it reversible} chains and {\it random walks on topological groups} with the Haar measure. In this paper we will only consider applications with concrete examples for random walks on $(\R^d, +)$ (see Section~\ref{Sec: Application to RWs}). Let us briefly describe possible applications for the first class of Markov chains. Recall that a chain $Y$ is reversible if it is self-dual with respect to a $\sigma$-finite measure $\mu$, which has to be invariant. For such chains, formula \eqref{eq: measures def} for $\mu_A^{entr}$ is particularly simple since we can take $\hat Y_1 =Y_1$. The assumptions can be verified as follows. A simple necessary and sufficient condition for recurrence of countable reversible Markov chains is due to Lyons~\cite{Lyons}. For transient chains other than random walks, we are aware of only one particular example with explicit characterization of all recurrent sets: by Gantert et al.~\cite[Theorem~1.7]{GPV}, a planar simple random walk conditioned on never hitting the origin visits any infinite subset of $\Z^2$ infinitely often a.s. Necessary and sufficient conditions for recurrence of a set for a countable state space Markov chain can be found in Bucy~\cite{Bucy}, Murdoch~\cite{Murdoch}, and Menshikov et al.~\cite[Theorem~2.5.8]{Menshikov+}. 

Let us stress that the above simple probabilistic explanation of invariance of the measures in \eqref{eq: measures def} neither clarifies how to {\it find} them and why they have the form given, nor allows us to prove their  uniqueness. Therefore, we use an entirely different method.

Our approach is built on {\it inducing}, a basic tool of ergodic theory, introduced by S.~Kakutani in 1943.
We need to use infinite ergodic theory since in many cases of interest the invariant measure $\mu$ of the chain $Y$ is infinite. In particular, so is  the Haar measure $\lambda$ on the subgroup $\ZZ$ of $\R^d$. Since $\lambda$  is invariant for the random walk~$S$ and the dual of $S$ with respect to $\lambda$ is $-S$, the first formula in \eqref{eq: measures def} reads as
\[
\lambda_A^{entr}(dx)=\P(X_1 \in x - A^c) \lambda(dx). 
\]
This example provides all applications of inducing presented in this paper (see Section~\ref{Sec: Application to RWs}). Our main interest is in level-crossings of random walks in dimension one, corresponding to $A=\pm [0,\infty)$. We also briefly consider other choices of $A$, two of which are discussed below.

First, let $A \subset \ZZ$ be such that both sets $A$ and $A^c$ have non-empty interior, and assume that the walk $S$ is topologically recurrent on $\ZZ$, so $d=1$ or $d=2$. Then the measure $\lambda_A^{entr}$ is always invariant for the entrance chain $S^{\to A}$ into $A$ and is finite if $A$ is bounded.

Second, $A$ is either the non-negative or the negative orthant in $\R^d$. Assume that both events $\{S_n \ge 0\}$ and $\{S_n < 0\}$ occur infinitely often a.s.; we always mean that inequalities between points in $\R^d$ hold coordinate-wise. Notice that in dimension one this assumption is equivalent to oscillation of $S$. Let us again stress that $S$ can be transient. Then the measure
\begin{equation} \label{eq: inv quadrant}
\pi_+(dx):= c_1 \I_{[0, \infty)^d}(x) (1 - \P(X_1 \le x)) \lambda(dx), \qquad x \in \ZZ,
\end{equation}
which satisfies $\pi_+=c_1 \lambda_{[0, \infty)^d}^{entr}$, is invariant for the entrance chain $S^{\to [0, \infty)^d}$ into the non-negative orthant $[0, \infty)^d$. In particular, for $d=1$ this means that $\pi_+$ is invariant for the chain $O$. Combining this with the analogous result for the chain $O^\downarrow$ of overshoots at down-crossings, which is invariant under $\pi_-:=c_1 \lambda_{(-\infty,0)^d}^{entr}$, we obtain that the measure $\pi=\frac12 \pi_+ + \frac12 \pi_-$ is invariant for the chain of overshoots $\mathcal O$. Let us mention that distributions of the same form as $\pi_+$ in $d=1$ appear on many occasions, as discussed in detail in~\cite[Sections 2.1 and~2.2]{MijatovicVysotsky}.

Our further result (Theorem~\ref{thm: inducing bijection}) implies that under the topological assumptions of recurrence, irreducibility, and weak Feller property of the chain $Y$ and, essentially, non-emptiness of the interiors of the sets $A$ and $A^c$, the questions of existence of an invariant measure, its ergodicity and uniqueness (up to a constant factor)  in the class of locally finite Borel measures have the same answer for each of the chains $Y$, $Y^{\to A}$, $Y^{A^c \to}$. In particular, since the Haar measure is known to be ergodic and unique locally finite invariant measure of the topologically recurrent random walk $S$ on $\ZZ$, this yields uniqueness and ergodicity of the invariant measure $\pi$ for the chain of overshoots $\mathcal O$. More generally (Theorem~\ref{thm: uniqueness}),
the same holds for the measure $\lambda_A^{entr}$ and the chain $Y^{\to A}$ if we assume for simplicity that $\lambda(\partial A)=0$.  Theorem~\ref{thm: inducing bijection} also is a useful tool for proving existence (Proposition~\ref{prop: existence}) of a locally finite invariant measure of the weak Feller chain $Y$ when its path empirical distributions are not tight and so the classical Bogolubov--Krylov theorem does not apply. This is based on {\it Kac-type formulas} of Proposition~\ref{prop: Kac MC}, which in a sense are inverse to those of Theorem~\ref{thm: inducing MC} obtained by inducing. {\it NB:} after this paper was finished, we found the works by Lin~\cite{Lin} and Skorokhod~\cite{Skorokhod} on invariant distributions of weak Feller topologically recurrent Markov chains, with a stronger existence result (see Proposition~\ref{prop: Skorokhod} and the preceding discussion).

Our approach also provides a natural venue to study {\it induced Markov chains} (sometimes referred to as {\it embedded} chains), obtained by sampling a Markov chain  when it belongs to an arbitrary fixed set. These chains are closely related to the entrance and exit chains, and some of our results can actually be obtained by sampling the bivariate chain $Z$ on $\XX \times \XX$, formed by the pair of two consecutive values of $Y$, when $Z$ belongs to the set $A^c \times A$. Induced chains are discussed, for example, in Revuz~\cite[Chapters~2.4 and~3.2]{Revuz}, where they appear in the context of probabilistic potential theory. For completeness of exposition, we also give versions of all our statements on existence and uniqueness of invariant measures for general induced chains. Some of these results, stated in Parts 1 of respective assertions, essentially are not new but it is hard to extract them from the literature even for recurrent chains. 

Based on our experience, the use of inducing in probability theory is mostly reverse, being related to Kac-type formulas for stationary processes and even more specifically, for Markov chains but only under the omnipresent assumption of recurrence. We stress that our main results Theorems~\ref{thm: duality},~\ref{thm: inducing MC} and Proposition~\ref{prop: Kac MC} also apply to {\it transient} Markov chains. 

We are not aware of any applications of (direct) inducing in problems specifically related to random
walks. In the context of level-crossings by one-dimensional random walks, the
classical and universal  tool is the Wiener--Hopf factorization.
Among the vast literature on the topic,  the works of Baxter~\cite{Baxter},
Borovkov~\cite{Borovkov}, and Kemperman~\cite{Kemperman} are  the most
relevant  to the questions considered here. These papers rely  on the
Wiener--Hopf factorization (which does not yield much for our problem!), in contrast to our entirely different approach.
As far as we know, fluctuation theory for random walks in $\R^d$ is limited to a single paper by Greenwood and Shaked~\cite{GreenwoodShaked}, and our formulas for invariant measures such as \eqref{eq: inv quadrant} are the only explicit high-dimensional results available.
The idea to study properties of random walks regarding them as ``stationary'' processes starting from the Haar measure is novel to us, and we have never seen it in the vast literature on random walks. 

We are not aware of any works concerning entrance and exit Markov chains of any type. 

We conclude the paper with a number of one-dimensional results on level-crossings of  random walks presented in Section~\ref{Sec: L_n}. The main result there is the limit theorem for the number of level-crossings motivated below in Section~\ref{Sec: motivation}. Crucially, this theorem requires no assumption other than that the increments are zero-mean and have finite variance. We also present several formulas for expected occupation times between level-crossings. 

\subsection{Motivation and related questions} \label{Sec: motivation} This section concerns only the one-dimensional case. There are several good reasons to study overshoots of random walks besides purely theoretical interest. First, there is connection to the local time. Namely, the Markov chain $\mathcal{O}$ features in Perkins's~\cite{Perkins} definition of the {\it local time of a random walk}. There is no conventional definition of this notion, see Cs\"org\H o and R\'ev\'esz~\cite{CsorgoRevesz} for other versions. Let
\begin{equation} \label{eq: Ln def}
L_n:=\max\{k \ge 0 : \mathcal{T}_k \le n\}
\end{equation}
be the {\it number of zero-level crossings} of the walk by time $n$, and let $\ell_0$ be the local time at level $0$ at time $1$ of a standard Brownian motion. Perkins~\cite[Theorem~1.3]{Perkins} proved that for a zero
mean random walk $S$ with finite variance $\sigma^2:= \E X_1^2$ and starting at $S_0=0$, one has
\begin{equation} \label{eq: Perkins 0}
\frac{1}{\sqrt{n}} \sum_{k=1}^{L_n} |\mathcal{O}_k| \todistr \sigma \ell_0.
\end{equation}
To the best of our knowledge, all the other limit theorems for the local time (under either definition) of a random walk with finite variance require additional smoothness assumptions on the distribution of increments. 

Under the above assumptions on the random walk, by ergodicity of the Markov chain $\mathcal{O}$ (Theorem~\ref{thm: uniqueness}), we have $\frac1n \sum_{k=1}^n |\mathcal{O}_k| \to \int_\ZZ |x| \pi(dx)$ a.s.\ for $\pi$-a.e.\ starting point $S_0=x\in \ZZ$; we will show that this convergence actually holds for every $x$. Hence \eqref{eq: Perkins 0} immediately gives a limit theorem for the number of level crossings $L_n$ divided by $\sqrt n$ (Theorem~\ref{thm: level-crossings}). Under the optimal moment assumption $\E X_1^2 < \infty$,  limit theorems of such type  were first obtained  in the early 1980s by  A.N.\ Borodin, who studied more general questions of convergence of additive functionals of consecutive steps of random walks; see Borodin and Ibragimov~\cite[Chapter V]{BorodinIbragimov} and references therein. However, Borodin's method assumes that the distribution of increments of the walk is either aperiodic integer-valued or has a square-integrable characteristic function, and hence it is (Kawata~\cite[Theorem~11.6.1]{Kawata}) absolutely continuous. We stress that our result, Theorem~\ref{thm: level-crossings}, does not require any smoothness assumptions. 

Second, the Markov chain $O$ appeared in the study of the probabilities that the {\it integrated random walk} $(S_1 + \ldots + S_k)_{1 \le k \le n}$ stays positive; see Vysotsky~\cite{Vysotsky2010, Vysotsky2014}. The main idea of the approach of~\cite{Vysotsky2010, Vysotsky2014} is in a) splitting the trajectory of the walk into consecutive ``cycles'' between the up-crossing times; and b)  using that for certain particular distributions of increments, e.g.\ in the case when the distribution $\P(X_1 \in \cdot \, | X_1>0)$ is exponential, the overshoots $(O_n)_{n \ge 1}$ are stationary (actually, i.i.d.) regardless of the starting point $S_0$. The current paper was originally motivated by the question whether this approach can be extended to general distributions of increments. 

Third, there is a close connection to so-called {\it switching random walks}.  Define the {\it switching ladder times}
\[
\mathcal{T}_0':=0, \quad \mathcal{T}_n':= 
\begin{cases}
\inf \{k>\mathcal{T}_{n-1}': S_k \le S_{\mathcal{T}_{n-1}'} \} , &  \text{if } S_{\mathcal{T}_{n-1}'} \ge 0,\\
\inf \{k>\mathcal{T}_{n-1}': S_k \ge S_{\mathcal{T}_{n-1}'} \} , &  \text{if } S_{\mathcal{T}_{n-1}'} < 0,\\
\end{cases}
\qquad n \in \N.
\]
and the {\it switching ladder heights} $Z_n:= S_{\mathcal{T}_n'}$, $n \ge 0$. The random sequence  $Z=(Z_n)_{n \ge 0}$ belongs to a special type of Markov chains that we call {\it random walks with switch} at zero, whose distributions of increments depend only on the sign of the current position of the chain; the other authors call them  {\it oscillating random walks} but this can be easily confused with oscillation in the sense used in Section~\ref{sec:Intro}. More precisely, the transition probabilities $P(x, dy)$ of such a chain $Y$ are of the form $P(x, dy) = P_{\sgn x}(dy -x)$ for $x \neq 0$ and $P(0, dy) = \alpha P_+(dy) + (1-\alpha) P_-(dy)$, where $P_+$ and $P_-$ are two probability distributions on  $\R$ and $\alpha \in [0,1]$. In the symmetric case when $P_+(dy)=P_-(-dy)$ and $\alpha=\frac12$, the sequence $(|Y_n|)_{n \ge 0}$ is called a {\it reflected random walk}. 

Random walks with switch were introduced by Kemperman~\cite{Kemperman}, and then considered in a few works including that by Borovkov~\cite{Borovkov}. Reflected random walks received much more attention; see Peign\'e and Woess~\cite{PeigneWoess} for  the most recent and comprehensive list of references and generalizations to processes of iterated i.i.d.\ random continuous mappings. Their relevance to the present paper is that the overshoots above zero level of the switching ladder heights chain $Z$ coincide with those of the random walk~$S$. 
Recently we learned about the unpublished work by Peign\'e and Woess~\cite{PeigneWoessUnpub} who found the invariant measure for a reflected random walk $(|Y_n|)_{n \ge 0}$ sampled at the moments of reflection at zero. In our terminology, this is the random sequence of absolute values of {\it non-strict} overshoots above the zero level by $Y$. One can check using the Wiener--Hopf factorization that in the special case when $Y$ is the switching ladder heights chain $Z$ generated by a random walk $S$  on $\ZZ=\R$ with symmetrically distributed increments, the invariant measure of~\cite[Theorem 4.7]{PeigneWoessUnpub} equals $\pi(|\cdot|)$. The other notable fact is that, if $\E X_1 =0$ and $\E X_1^2 <\infty$, then $\pi$ is an invariant distribution for a random walk with switch at zero defined by $P_+=\P(X_1 \in \cdot | X_1 <0)$, $P_-=\P(X_1 \in \cdot | X_1 >0)$, and $\alpha =1$; this can be shown using a stationary distribution for $Y$ found in~\cite{Borovkov}. We will use our inducing approach to explore these connections and show further relations to classical stationary distributions of renewal theory in the separate paper~\cite{Vysotsky2018}.  

\subsection{Structure of the paper} 
In Section~\ref{Sec: duality} we carefully define the entrance and exit chains and use probabilistic arguments to check their Markov property and the crucial property of duality. In Section~\ref{sec: stationarity MC} we study stationarity of induced, entrance and exit chains using the idea of inducing from ergodic theory: in Section~\ref{sec: setup MCs} we provide a self-contained setup needed to apply inducing in the context of Markov chains; in Section~\ref{sec: Inducing MCs} we show its use to find invariant measures for the three types of chains in question, obtained from a general Markov chain on a topological space; and in Section~\ref{sec: MC e and u} we study existence and uniqueness of these invariant measures for a more specific class  of weak Feller chains on Polish spaces. In Section~\ref{Sec: Application to RWs} we give applications to the entrance chains sampled from random walks in arbitrary dimension, including the chains of overshoots above a level in dimension one. In Section~\ref{Sec: L_n} present further one-dimensional results on level-crossings of random walks, including the limit theorem for the number of level-crossings. We conclude the paper with the Appendix, where we give the basics on the use of inducing in infinite ergodic theory. We present them in a nearly self-contained way and with proofs to ensure assumptions more general than those we found in the literature.

\section{Entrance and exit Markov chains and their duality} \label{Sec: duality}

\subsection{Basic notation} Throughout this paper $\XX$ will be a topological space. We always equip subsets of $\XX$ with the subspace (induced) topology. All the measures on $\XX$ (and its subsets) considered here will be {\it Borel measures}, that is defined on the Borel $\sigma$-algebra $\mathcal{B}(\XX)$.  

Let $Y=(Y_n)_{n \ge 0}$ be a time-homogeneous Markov chain taking values in $\XX$. By saying this, we assume that $Y$ is defined on some generic probability space $(\Omega, \mathcal F, \P)$ and $Y$ has a transition kernel $P$ on $\XX$ under $\P$. We also assume that $(\Omega, \mathcal F)$ is equipped with a family of probability measures $\{\P_x \}_{x \in \XX}$ such that: $Y$ is a Markov chain with the transition kernel $P$ under $\P_x$ and $\P_x(Y_0=x) =1$ for every $x \in \XX$, and the function $x \mapsto \P_x(Y \in B)$ is measurable for any set $B \in \mathcal{B}(\XX)^{\otimes \N_0}$, where $\N_0:=\N \cup \{0\}$. By the Ionescu Tulcea extension theorem (Kallenber~\cite[Theorem~6.17]{Kallenberg}), such family of measures always exists for any transition kernel on $\XX$. For any Borel $\sigma$-finite measure $\mu$ on $\XX$, denote $\P_\mu := \int_\XX \P_x(\cdot) \mu(dx)$. Then $Y_0$ has  ``distribution'' $\mu$   under $\P_\mu$, in which case we say that $Y$ {\it starts under} $\mu$. Although $\mu$ is not necessarily a probability, we prefer to (ab)use probabilistic notation and terminology as above,  instead of using respective notions of general measure theory. Denote by $\E_x$ and $\E_\mu$ respective expectations (Lebesgue integrals) over $\P_x$ and~$\P_\mu$. 

We say that a transition kernel $\hat P$ on $\XX$ is {\it dual} to $P$ with respect to a $\sigma$-finite measure $\mu$ on $\XX$ if the following equality of measures on $\BB(\XX) \otimes \BB(\XX)$ is satisfied:
\begin{equation} \label{eq: detailed bal}
\mu(dx) P(x, dy) = \mu(dy) \hat P(y, dx), \qquad x, y \in \XX.
\end{equation}
This equality is called the {\it detailed balance condition}. We will also use this definition of duality for Markov chains on subsets of $\XX$ of full measure $\mu$. If $\XX$ is separable, there cannot be two distinct$\Mod{\mu}$ kernels dual to $P$ (see Revuz~\cite[Lemma~4.7 in Chapter~2]{Revuz}); if $\XX$ is a Polish space, then a dual kernel  always exists and is unique$\Mod{\mu}$ (see Section~\ref{sec: setup MCs}). Two Markov chains $Y$ and  $\hat Y$ on $\XX$ are {\it dual} if so are their transition operators. In other words, assuming for convenience that $\hat Y_0=Y_0$, we have the {\it time-reversal  equality} 
\begin{equation*} 
\P_\mu((Y_0, Y_1) \in B) = \P_\mu((\hat Y_1, \hat Y_0) \in B), \qquad B \in \BB(\XX) \otimes \BB(\XX).
\end{equation*}
Note that this equality implies that  the measure $\mu$ is {\it invariant}  both chains $Y$ and $\hat Y$, that is $\P_\mu(Y_1 \in \cdot)=\P_\mu(\hat Y_1 \in \cdot) = \mu$. 
More generally, for any $k \ge 1$ we have
\begin{equation} \label{eq: time-reversal}
\P_\mu((Y_0, \ldots, Y_k) \in B) = \P_\mu((\hat Y_k, \ldots,  \hat Y_0) \in B), \qquad B \in \BB(\XX)^{\otimes (k+1)}.
\end{equation}

\subsection{Entrance and exit Markov chains} \label{sec: entr exit} Fix a Borel set $A \subset \XX$. Define the {\it entrance times to $A$ from $A^c$} of the chain $Y$ by $T_0^{\to A}:=0$ and
\begin{multline*} 
\shoveleft{T_{n}^{\to A}:= \inf\{k>T_{n-1}^{\to A}: Y_{k-1}\in A^c, Y_k \in A \} \quad \text{ on } \{Y_k \in A \text{ i.o.}, Y_k \in A^c \text{ i.o.}\}, \, n \in \N,}
\end{multline*}
and the sequences of {\it entrances to $A$ from} $A^c$ and  {\it exits from $A^c$ to $A$} for $Y$, respectively, by
\begin{multline*} 
\shoveleft{Y_n^{\to A}:=Y_{T_{n}^{\to A}} \quad \text{and} \quad Y_n^{A^c \to}:=Y_{T_{n}^{\to A}-1} \quad \text{ on } \{Y_k \in A \text{ i.o.}, Y_k \in A^c \text{ i.o.}\}, \, n \in \N,}
\end{multline*}
and $Y_0^{\to A}=Y_0^{A^c \to}:=Y_0$, where ``i.o.'' stands for ``infinitely often''. 

We will need to clarify for which initial values $Y_0$ of the Markov chain $Y$ the i.o.\ events occur. To this end, we define the set $N_{A', Y}$ (in short, $N_{A'}$) as follows:
\[
N_{A', Y}:= \{x \in \XX: \P_x(Y_k \in A' \text{ i.o.})=1\}, \qquad A' \in \BB(\XX).
\]
Conditions when the sets $N_A$ and $N_A \cap N_{A^c}$ are non-empty will be discussed in Section~\ref{sec: stationarity MC}. Define the {\it exit set of $Y$ from $A^c$}, denoted by $(A^c)_{ex, Y}$ (in short, $A^c_{ex}$):
\begin{equation} \label{eq: Aex}
A^c_{ex, Y}:=\{x \in A^c: P(x,A)>0\}.
\end{equation}

The following result justifies referring to the sequences $Y^{\to A}$ and $Y^{A^c \to}$ as {\it entrance} and {\it exit} Markov chains. 

\begin{lemma} \label{lem: Markov}
Let $Y$ be a Markov chain taking values in a topological space $\XX$. Let $A  \subset \XX $ be a Borel set such that $N_A \cap N_{A^c} $ is non-empty. 
Then the entrance sequence $Y^{\to A}$ and the exit sequence $Y^{A^c \to}$ are Markov chains on the respective sets $A \cap N_A \cap N_{A^c}$ and $A^c_{ex} \cap N_A \cap N_{A^c}$, with the probability transition kernels
\[
P_A^{entr}(x, dy):= \P_x(Y_1^{\to A} \in dy), \qquad x,y \in A \cap N_A \cap N_{A^c}
\]
and 
\begin{multline} \label{eq: exit chain}
\shoveleft{P_{A^c}^{exit}(x, dy):= \int_A \P_z(Y_1^{A^c \to} \in dy)  \P_x(Y_1 \in dz|Y_1 \in A), \qquad x,y \in A^c_{ex} \cap N_A \cap N_{A^c}.}
\end{multline}
\end{lemma}


We say that a Borel measure $\nu$ on $\XX$ is {\it proper}  for the  chains $Y^{\to A}$ and $Y^{A^c \to}$ if $\nu$ is supported on $N_A \cap N_{A^c}$, i.e.\ $\nu((N_A \cap N_{A^c})^c) = 0$. We will start these chains only under their proper measures. 

\begin{proof} 
We will use that for any $A' \in \BB(\XX)$, the set $N_{A'}$ is absorbing for $Y$, in the sense that
\begin{equation} \label{eq: NA}
\P_x(Y_1 \in N_{A'})= 1, \qquad x \in N_{A'}.
\end{equation}
Indeed,  for every $x \in N_{A'}$ we have
\[
1= \P_x( Y_k \in A' \text{ i.o.})=\int_\XX \P_x(Y_k \in A' \text{ i.o.} | Y_1 =y) \P_x(Y_1 \in dy) =  \int_\XX \P_y(Y_k \in A' \text{ i.o.} ) P(x, dy). 
\]
Hence $\P_y(Y_k \in A' \text{ i.o.} )=1$, i.e.\ $y \in N_A'$, for $P(x, \cdot)$-a.e.\ $y$. That is, $\P_x(Y_1 \in N_{A'})=1$.


It follows from equality \eqref{eq: NA} that the non-empty set $N_A \cap N_{A^c}$ is absorbing for $Y$. Then clearly $A \cap N_A \cap N_{A^c}$ and $A^c_{ex} \cap N_A \cap N_{A^c}$ are non-empty. Hence for every $x \in N_A \cap N_{A^c}$, the sequences $(Y^{A^c \to}_n)_{n \ge 1}$ and $(Y^{\to A}_n)_{n \ge 1}$ belong respectively to $A^c_{ex} \cap N_A \cap N_{A^c}$ and  $A \cap N_A \cap N_{A^c}$, and thus have infinitely many terms, $\P_x$-a.s. Then the sequence $(T_n^{\to A})_{n \ge 1}$ of entrance times to $A$ from $A^c$ is infinite $\P_x$-a.s. This is a sequence of stopping times with respect to $Y$.  Then it follows from the equality $Y^{\to A}=(Y_{T_{n}^{\to A}})_{n \ge 0} $ that $Y^{\to A}$ is a Markov chain under~$\P_x$. We omit the proof of this standard fact, which is based on the strong Markov property of $Y$. The formula for the transition kernel $P_A^{entr}$of $Y^{\to A}$ is evident; $P_A^{entr}$ is a probability kernel on $A \cap N_A \cap N_{A^c}$ by \eqref{eq: NA}.

The Markov property of the exit sequence $Y^{A^c \to}$ is not evident since  $(T_n^{\to A}-1)_{n \in \N}$ are not stopping times with respect to $Y$. We see from definition~\eqref{eq: exit chain} that $P_{A^c}^{exit}$ (in short, $P_{A^c}^{ex}$) is a probability kernel on $A^c_{ex} \cap N_A \cap N_{A^c}$ by \eqref{eq: NA}. To prove that $Y^{A^c \to}$ is a Markov chain on $A^c_{ex} \cap N_A \cap N_{A^c}$ with the transition kernel $P_{A^c}^{ex}$, it suffices to show that for any $x \in N_A \cap N_{A^c}$, integer $n \ge 2$, and Borel sets $B_1, B_2, \ldots \subset A^c_{ex} \cap N_A \cap N_{A^c}$, we have
\[
\P_x(Y_1^{A^c \to} \in B_1, \ldots, Y_n^{A^c \to} \in B_n) = \int_{B_1} \P_x(Y_1^{A^c \to} \in dx_1) \int_{B_2} P_{A^c}^{ex}(x_1, d x_2) \ldots  \int_{B_n} P_{A^c}^{ex}(x_{n-1}, d x_n). 
\]

The proof is by induction. Let $n=2$. Since $T_1^{\to A}$ is finite $\P_x$-a.s., it is true that
\begin{align*}
\P_x(Y_1^{A^c \to} \in B_1, Y_2^{A^c \to} \in B_2) &=\sum_{k=1}^\infty \P_x(T_1^{\to A}=k, Y_{k-1} \in B_1, Y_2^{A^c \to} \in B_2) \\
&= \sum_{k=1}^\infty \P_x\big(T_1^{\to A}>k-1, Y_{k-1} \in B_1, Y_k \in A, Y_2^{A^c \to} \in B_2\big) \\
&=\sum_{k=1}^\infty \int_{B_1} \P_x(T_1^{\to A}>k-1, Y_{k-1} \in d x_1) \\ 
&\qquad \qquad \times \P_x \big( Y_k \in A, Y_2^{A^c \to} \in B_2\big| \big.Y_{k-1} = x_1, T_1^{\to A}>k-1 \big).
\end{align*}
By the Markov property of $Y$, for $\P_x(Y_{k-1} \in \cdot)$-a.e.\ $x_1 \in B_1$ and every $k \ge 1$ we have
\begin{align*}
\P_x\big( Y_k \in A, Y_2^{A^c \to} \in B_2\big| \big.Y_{k-1} = x_1, T_1^{\to A}>k-1 \big) &= \P_{x_1}( Y_1 \in A, Y_2^{A^c \to} \in B_2) \\
&= \int_A \P_z (Y_1^{A^c \to} \in B_2) \P_{x_1}(Y_1 \in d z). 
\end{align*}

On the other hand, from definition~\eqref{eq: exit chain} of  $P_{A^c}^{ex}$ we see that
\begin{equation} \label{eq: P1}
\int_A \P_z (Y_1^{A^c \to} \in B_2) \P_{x_1}(Y_1 \in d z) = \P_{x_1}(Y_1 \in A) P_{A^c}^{ex}(x_1, B_2).
\end{equation}
Putting everything together, we obtain
\begin{align}
\P_x(Y_1^{A^c \to} \in B_1, Y_2^{A^c \to} \in B_2) 
&=\sum_{k=1}^\infty \int_{B_1}  \P_x(T_1^{\to A}>k-1, Y_{k-1} \in d x_1) \P_{x_1}(Y_1 \in A) P(x_1, B_2) \notag\\
&=\int_{B_1} \P_x(Y_1^{A^c \to} \in dx_1) \int_{B_2} P_{A^c}^{ex}(x_1, d x_2), \label{eq: Markov exit} 
\end{align}
as required in the case $n=2$. This proves the basis of induction.

To prove the inductive step, we proceed exactly as above and arrive at
\begin{align*}
&\mathrel{\phantom{=}}   \P_x(Y_1^{A^c \to} \in B_1, \ldots, Y_{n+1}^{A^c \to} \in B_{n+1}) \\
 &=\sum_{k=1}^\infty \int_{B_1} \P_x(T_1^{\to A}>k-1, Y_{k-1} \in d x_1) \int_A \P_z(Y_1^{A^c \to} \in B_2, \ldots, Y_n^{A^c \to} \in B_{n+1}) \,  \P_{x_1}(Y_1 \in d z).
\end{align*} 
We now use the assumption of induction for the probability under $\int_A$ when $z \in N_A \cap N_{A^c}$. Since $\P_{x_1}(Y_1 \in N_A \cap N_{A^c}) =1$ for every $x_1 \in  N_A \cap N_{A^c}$ because the set $N_A \cap N_{A^c}$ is absorbing for $Y$, we get
\[
\int_A \P_z(Y_1^{A^c \to} \in B_2, \ldots, Y_n^{A^c \to} \in B_{n+1}) \, \P_{x_1}(Y_1 \in d z) = \int_A \P_{x_1}(Y_1 \in d z) \int_{B_2} f(x_2) \P_z(Y_1^{A^c \to} \in d x_2), 
\]
where $f$ is a non-negative measurable function on $B_2$ given by
\[
f(x_2):= \int_{B_3} P_{A^c}^{ex}(x_2, d x_3) \ldots  \int_{B_{n+1}} P_{A^c}^{ex}(x_n, d x_{n+1}). 
\]

We claim that  for any $x_1 \in B_1$ and any non-negative measurable function $g$ on $B_2$, 
\begin{equation} \label{eq: Pf}
\int_A \P_{x_1}(Y_1 \in d z) \int_{B_2} g(x_2) \P_z(Y_1^{A^c \to} \in d x_2)  = \P_{x_1}(Y_1 \in A) \int_{B_2} g(x_2) P_{A^c}^{ex}(x_1, d x_2).
\end{equation}
Indeed, for indicator functions $g$ this holds by definition~\eqref{eq: exit chain} of  $P_{A^c}^{ex}$; cf.~\eqref{eq: P1}. Hence, \eqref{eq: Pf} holds for simple functions (i.e.\ finite linear combinations of indicator functions) by additivity of the three integrals in~\eqref{eq: Pf}. Finally, since any non-negative measurable function $g$ can be represented as pointwise limit of a pointwise non-decreasing sequence of simple functions,  equality~\eqref{eq: Pf} follows from the monotone convergence theorem.

Putting everything together and applying \eqref{eq: Pf} with $g=f$ establishes the inductive step (exactly as we obtained \eqref{eq: Markov exit} applying \eqref{eq: P1} in the case $n=2$).
\end{proof}

\subsection{Duality of entrance and exit Markov chains} \label{sec: duality}

The following assertion is crucial.

\begin{theorem} \label{thm: duality}
Let $Y$ and $\hat Y$ be Markov chains that take values in a topological space $\XX$, are dual with respect to a $\sigma$-finite invariant Borel measure $\mu$, and satisfy $Y_0=\hat Y_0$. Let $A \in \BB(\XX)$ be a set such that $N:= N_{A, Y} \cap N_{A^c, Y} \cap N_{A, \hat Y} \cap N_{A^c, \hat Y}$ is non-empty. Then the exit chain $Y^{A^c \to}$ and the entrance chain $\hat Y^{\to A^c}$ are dual with respect to the Borel measure on $A^c$ given~by
\begin{equation*} 
\tilde \mu_{A^c, Y}^{exit}(dx):= \I_N(x) \P_x(Y_1 \in A) \mu(dx), \qquad  x \in A^c.
\end{equation*}
\end{theorem}

Recall that there always exists a chain $\hat Y$ dual to $Y$ if $\XX$ is a Polish space (see Section~\ref{sec: setup MCs}).

Note in passing that in the special case when $Y$ is an oscillating  random walk $S$ on $\ZZ=\R$ and $A=[0,\infty)$, we actually have (see \cite[Section~2.4]{MijatovicVysotsky}) a quite surprising representation of the transition kernels of the chains $S^{\to A}$ and $-S^{A^c \to}$ (i.e.\ $O$ and $-U$) as products of two kernels {\it reversible} (self-dual) with respect to $\lambda_{A, S}^{entr}=\pi_+$; see the Introduction for the notation.

\begin{cor} \label{cor: dual}
The  entrance chain $Y^{\to A}$ and the exit chain ${\hat Y}^{A \to}$ are dual with respect to the measure $\tilde \mu_{A, \hat Y}^{exit}$ on $A$.
\end{cor}

This follows if we apply Theorem~\ref{thm: duality} to the chain $\hat Y$ and the set $A^c$ instead of $Y$ and~$A$.

\begin{cor} \label{cor: invar tilde}
The measure $\tilde \mu_{A^c, Y}^{exit}$ is invariant for the exit chain $Y^{A^c \to}$ and the measure $\tilde \mu_{A, \hat Y}^{exit}$  is invariant for the entrance chain $Y^{\to A}$. 
\end{cor}

The follows immediately from the dualities in Theorem~\ref{thm: duality}  and Corollary~\ref{cor: dual}.

\begin{cor} \label{cor: invar}
The measures $\mu_{A^c}^{exit} $ and $ \mu_A^{entr}$ (defined in  \eqref{eq: measures def}) are invariant for the respective chains $Y^{A^c \to}$ and $Y^{\to A}$ when $\mu_{A^c}^{exit}(N^c)=\mu_A^{entr}(N^c)=0$ (e.g., this holds if $\mu(N^c)=0$).
\end{cor}

This follows from Corollary~\ref{cor: invar tilde} since the assumption yields  $\tilde \mu_{A^c, Y}^{exit} = \mu_{A^c}^{exit} $ and $\tilde \mu_{A, \hat Y}^{exit} =\mu_A^{entr}$. These inequalities hold if $\mu(N^c)=0$ since $\mu_{A^c}^{exit}  \ll \mu$ and  $\mu_{A}^{entr}  \ll \mu$. 
Note that in general, they may cease to be true, for example, if the set $N$ is empty.

\begin{proof}[{\bf Proof of Theorem~\ref{thm: duality}}]
By Lemma~\ref{lem: Markov}, the sequences  $Y^{A^c \to}$ and $\hat Y^{\to A^c}$  are Markov chains on the respective sets $A^c_{ex, Y} \cap N$ and $A^c \cap N$. Since the measure $\tilde \mu_{A^c, Y}^{exit}$ is supported on $A^c_{ex, Y} \cap N$, the duality stated will follow once we check the detailed balance condition
\[
\tilde \mu_{A^c, Y}^{exit}(dx) P_{A^c}^{exit}(x, dy) = \tilde \mu_{A^c, Y}^{exit}(dy) \hat P_{A^c}^{entr}(y, dx), \qquad x, y \in A^c_{ex, Y} \cap N.
\]
By definition of $\tilde \mu_{A^c, Y}^{exit}$, this amounts to showing that for any Borel sets $B_1, B_2 \subset A^c_{ex, Y} \cap N$, 
\begin{equation} \label{eq: main duality}
\int_{B_1} P_{A^c}^{exit}(x, B_2)  \P_x( Y_1 \in A) \mu(dx) = \int_{B_2} \hat P_{A^c}^{entr}(y, B_1)  \P_y( Y_1 \in A) \mu(dy). 
\end{equation}

Let us transform the l.h.s. The chain  $\hat Y$ starts under $\mu$ since $\hat Y_0 = Y_0$ by assumption. Then, using at first definition \eqref{eq: exit chain} of the transition operator $P_{A^c}^{exit}$, we get
\begin{align*}
\text{LHS } \eqref{eq: main duality} &=\int_{B_1} \mu(dx) \int_A \P_z(Y_1^{A^c \to} \in B_2)  \P_x(Y_1 \in dz)  \\
&= \sum_{k,m=1}^\infty \P_\mu \big((Y_n)_{n=0}^{k+m+1} \in B_1 \times (A \cap N)^k \times (A^c \cap N)^{m-1} \times B_2 \times (A \cap N) \big)\\
&= \sum_{k,m=1}^\infty \P_\mu \big((\hat Y_n)_{n=0}^{k+m+1} \in 
(A \cap N) \times B_2 \times (A^c \cap N)^{m-1} \times (A \cap N)^k \times B_1 \big),
\end{align*}
where the third equality followed from duality of $Y$ and $\hat Y$ by~\eqref{eq: time-reversal}, and in the second equality we used that the set $N$ is absorbing for the chain $Y$ by \eqref{eq: NA}, the assumption $B_1 \subset N$, and the fact that $T_2^{\to A}$ is finite $\P_x$-a.s.\ for every $x \in N$ by $N \subset N_{A,Y} \cap N_{A^c, Y}$. We transform the last sum above using the same reasoning applied to the dual chain $\hat Y$ instead of $Y$:
\begin{align*}
\text{LHS } \eqref{eq: main duality}  &= \E_\mu[\I_{A \cap N}(\hat Y_0) \I_{B_2}(\hat Y_1) \hat P_{A^c}^{entr}(\hat Y_1, B_1)] \\
&=  \E_\mu[\I_{A \cap N}(Y_1) \I_{B_2}(Y_0) \hat P_{A^c}^{entr}(Y_0, B_1)]  = \text{RHS } \eqref{eq: main duality}, 
\end{align*}
where in the second equality we again used duality of $Y$ and $\hat Y$. This finishes the proof.
\end{proof}

\begin{remark} \label{rem: duality} 
Assuming that the transition kernel $P$ of the chain $Y$ admits a dual kernel $\hat P$ with respect to $\mu$, define the  {\it entrance set into $A$} of $Y$ started under $\mu$:
\begin{equation} \label{eq: Aen}
A_{en, Y}(\mu) :=  \{x \in A: \hat P(x, A^c) >0\}.
\end{equation}
It is readily seen  from the time-reversal equality (see~\eqref{eq: time-reversal}  for $k=1$) that this set supports the entrance measure  $\mu_A^{entr}$, given by $\mu_A^{entr} = \P_\mu(Y_0 \in A^c, Y_1 \in A \cap \cdot)$, in the sense that $\mu_A^{entr} (A\setminus A_{en, Y}(\mu))=0$ and $\mu_A^{entr}(A')>0$ for every Borel set $A' \subset A_{en, Y}(\mu) $ such that $\mu(A')>0$. This ensures that $A_{en, Y}(\mu) $ does not depend$\Mod{\mu}$ on the dual kernel $\hat P$ chosen.

We now can simplify the assumptions of Corollary~\ref{cor: invar}  using the  equivalences
\begin{equation*}
\mu_{A^c}^{exit}(N^c)=0 \Leftrightarrow \mu_A^{entr}(N^c)=0 \Leftrightarrow 
\Big( \mu(A^c_{ex} \cap N_{A_{en}, Y'}) = \mu(A_{en} \cap N_{A^c_{ex}, Y'}) = 0 \text{ for } Y' \in \{Y, \hat Y\} \Big),
\end{equation*}
where $A_{en}=A_{en,Y}(\mu)$ and $A^c_{ex}=(A^c)_{ex,Y}$. These equivalences follow rather straightforwardly and we will not prove them in full. The fact that the third condition implies the first two is essentially shown in the proof of Part 2b of Theorem~\ref{thm: inducing MC} below.

Notice that the third condition is symmetric in the sense that it does no change if we substitute $(Y,A)$ by $(\hat Y, A^c)$. This simply interchanges the sets $A_{en}$ and $A^c_{ex}$ since by \eqref{eq: Aen},
\begin{equation} \label{eq: AenAex symm}
A_{en, Y}(\mu) =A_{ex, \hat Y} \Mod{\mu} \qquad \text{and} \qquad  A^c_{ex, Y} =A^c_{en, \hat Y}(\mu) \Mod{\mu}.
\end{equation}
\end{remark}

\section{Entrance and exit Markov chains via the method of inducing} \label{sec: stationarity MC}

In this section we study stationarity of general entrance and exit Markov chains using the results of infinite ergodic theory. The application of these results is rather straightforward for recurrent Markov chains and amounts to working with measure preserving shifts on sequences. For transient chains, we need to introduce additional construction allowing the time to run backwards in order to let us work with {\it invertible} two-sided shifts on bilateral sequences. The simple time-reversal argument used in the proof of Theorem~\ref{thm: duality} suggests that this extension is indeed natural.

For completeness of exposition, we give all results on entrance and exit chains together with analogous statements for closely related {\it induced Markov chains}. This addition is very natural within the context used, and it does not take a lot of effort to provide it. The corresponding results (presented in Parts~1 of Theorems~\ref{thm: inducing MC},~\ref{thm: inducing bijection} and Proposition~\ref{prop: Kac MC})  are not new, morally or essentially, but it is hardly possibly to provide matching references, especially due to the very general assumptions we use.

\subsection{Setup and notation} \label{sec: setup MCs}
Recall that $Y$ is a Markov chain on the topological space $\XX$. Denote by $\PP_\mu^Y:=\P_\mu(Y \in \cdot)$ the ``distribution'' of $Y$  started under a Borel measure $\mu$ on the space of sequences $\mathcal{B}(\XX)^{\otimes \N_0}$, and denote by $\EE_\mu^Y$ the ``expectation'' (integral) over $\PP_\mu^Y$. 

For the rest of Section~\ref{sec: setup MCs} we assume that $\mu$ a $\sigma$-finite non-zero invariant measure of $Y$. For any $x \in \XX^{\N_0}$, denote by $x_i$ the $i$-th coordinate of~$x$. Similarly, for any non-empty set $I \subset \N_0$, denote by $x_I$ the projection of $x$ onto $\XX^I$. Let $\theta$ be the {\it one-sided shift} operator on $\XX^{\N_0}$ defined by $(\theta x)_i :=x_{i+1}$ for $i \in \N_0$. Then~$\theta$ is a measure preserving transformation of the $\sigma$-finite measure space $(\XX^{\N_0}, \mathcal{B}(\XX)^{\otimes \N_0}, \PP_\mu^Y)$. 

For any set $C \in \mathcal{B}(\XX)^{\otimes \N_0}$, the shift $\theta$ defines the {\it first hitting time} $\tau_C$ of $C$,   the {\it first hitting mapping} $\varphi_C:=\theta^{\tau_C}$ of $C$, $\tilde \varphi_C:=\theta^{\tau_C \cdot \I_{\XX \setminus C}}$, and the {\it induced shift} $\theta_C:={(\varphi_C)|}_C$ on $C$; see the Appendix for details. For any $k \ge 1$ and $B \in \mathcal{B}(\XX)^{\otimes k}$, define the {\it cylindrical set}
\[
C_B:=\{x \in \XX^{\N_0}: (x_0, \ldots, x_{k-1}) \in B\}.
\]
We will use the short notation $\tau_B':=\tau_{C_B}$, which matches in the case $k=1$ the traditional probabilistic notation for the hitting time of the set $B$. For arbitrary $k$, we can think of $\tau_B'$ as of the hitting time of $B$ by the multivariate Markov chain $(Y_n, \ldots, Y_{n+k-1})_{n \ge 0}$ on $\XX^k$. 

We can use the new notation to write the entrance chain $Y^{\to A}$ into a set $A \in \BB( \XX)$ and the exit chain $Y^{A^c \to}$ from $A^c$, defined in Section~\ref{sec: entr exit}, as 
\begin{multline} \label{eq: positions chains}
\shoveleft{(Y_n^{A^c \to}, Y_n^{\to A}) =\bigl( \theta_{C_{A^c \times A}}^{n -1} \circ \tilde \varphi_{C_{A^c \times A}}(Y) \bigr)_{\{0,1\}}, 
\quad \text{ on } \{Y_k \in A \text{ i.o.}, Y_k \in A^c \text{ i.o.}\}, \, n \in \N.}
\end{multline}
We will also consider the {\it induced} sequence $Y^A$ obtained by restricting the chain $Y$ to $A$: 
\begin{equation} \label{eq: induced chain}
Y_n^A:=\bigl( \theta_{C_A}^{n-1} \circ \varphi_{C_A} (Y) \bigr)_0 \qquad \text{ on } \{Y_k \in A \text{ i.o.}\},  \, n \in \N_0.
\end{equation} 
It is easy to show, using the strong Markov property of $Y$, that $Y^A$ is a Markov chain on $A \cap N_A$  (cf.~Lemma~\ref{lem: Markov}). We will say that a Borel measure $\nu$ on $\XX$ is {\it proper}  for $Y^A$ if $\nu(N_A^c)=0$; we will start induced chains only under their proper measures. 

The powerful idea of ergodic theory is that the induced shift $\theta_C$ is a measure preserving transformation of the induced space $(C, \BB(\XX)^{\otimes \N_0} \cap C, {(\PP_\mu^Y)|}_C)$, under certain recurrence-type assumptions on $Y$ and $C$; see Lemmas~\ref{lem: induced}, ~\ref{lem: induced'}, and~\ref{lem: induced2}. Below we introduce the definitions needed to apply these general results of ergodic theory  in the context of Markov chains. We also refer the reader to  Kaimanovich~\cite[Section~1]{Kaimanovich} for a brief account of relevant results on invariant Markov shifts, and to Foguel~\cite{Foguel} for a detailed one.

The Markov chain $Y$ is called {\it recurrent starting under} $\mu$ if for every Borel set $B \subset \XX$ such that $\mu(B)<\infty$,  we have $\P_x(\tau_B'(Y)<\infty)=1$ for $\mu$-a.e.\ $x \in B$. Clearly, we can drop the assumption $\mu(B)<\infty$, as readily seen from  $\sigma$-additivity of $\mu$. It follows easily from invariance of $\mu$ that this definition is equivalent to $\P_x(\{Y_n \in B \text{ i.o.}\})=1$ for $\mu$-a.e.\ $x \in B$; see~\eqref{eq: io}. Following Kaimanovich~\cite{Kaimanovich}, we say that $Y$ is {\it transient starting under}~$\mu$ if for every $B \in \BB(\XX)$ such that $\mu(B)<\infty$,  we have $\P_x(\{Y_n \in B \text{ i.o.}\})=0$ for $\mu$-a.e.\ $x \in B$. We stress that for transient $Y$, it may be that $\P_{\mu|_B}(\{Y_n \in B \text{ i.o.}\})>0$ when $\mu(B)=\infty$. There is the usual recurrence--transience dichotomy, see Condition~\ref{cond: dychotomy} in Lemma~\ref{lem: cond rec} below.

We say that $Y$ is {\it topologically recurrent} if $\P_x(\tau_G'(Y)<\infty)=1$ for every open set $G \subset \XX$ and {\it every} $x \in G$. Warning: this definition matches the ergodic-theoretic one, while the Markov chains literature often defines topological recurrence by taking every $x \in \XX$ instead of every $x \in G$ (with $G \neq \varnothing$). In certain cases these two definitions are equivalent; see~\eqref{eq: conventional topological recurrence}.

Furthermore, we say that $Y$ is {\it ergodic starting under}~$\mu$ or, synonymously, $\mu$ is an {\it ergodic} invariant measure of $Y$, if  the ($\PP_\mu^Y$-preserving) shift $\theta$ is ergodic. The chain $Y$ is called {\it irreducible starting under} $\mu$ if every invariant set of $Y$ is $\mu$-trivial, that is for any $A \in \mathcal{B}(X)$, the equality $\P_x(Y_1 \in A) = \I_A(x) \Mod{\mu}$ implies that either $\mu(A)=0$ or $\mu(A^c)=0$. Warning: this shall not be confused with {\it  $\mu$-irreducibility}. We say that $Y$ is {\it topologically irreducible} if $\P_x(\tau_G'(Y) < \infty )>0$ for every $x \in \XX$ and every non-empty open set $G \subset \XX$. 

Let us give necessary and sufficient conditions for recurrence and ergodicity of $Y$. 

\begin{lemma} \label{lem: cond rec}
Let $Y$ be a Markov chain that takes values in a topological space $\XX$ and has a $\sigma$-finite invariant Borel measure $\mu$. The following statements hold true.

\begin{enumerate}[1)]
\item \label{cond: conserv} $Y$ is recurrent starting under $\mu$ iff the shift $\theta$ on $(\XX^{\N_0}, \mathcal{B}(\XX)^{\otimes \N_0}, \PP_\mu^Y)$ is conservative.

\item \label{cond: recur} $Y$ is recurrent starting under $\mu$ iff there exists a sequence of sets $\{B_n\}_{n \ge 1} \subset \mathcal B(\XX)$ such that $\XX= \cup_{n \ge 1} B_n \Mod{\mu}$, and $\P_{\mu|_{B_n}}(\tau_{B_n}'(Y) = \infty) =0$ and $\mu(B_n)<\infty$ for every $n \ge 1$.

\item \label{cond: recur finite}  $Y$ is recurrent starting under $\mu$ if for some $k \ge 1$ there exists a set $B \in \mathcal B(\XX)^{\otimes k}$ such that $\P_\mu(\tau_B'(Y) = \infty) = 0$ and $\P_\mu((Y_1, \ldots, Y_k) \in B) < \infty$. In particular, this holds if $\P_x(\tau_G'(Y)<\infty)=1$ for an open set $G \subset \XX$ of finite measure $\mu$ and every $x \in \XX$.

\item \label{cond: ergodic} $Y$ is ergodic and recurrent iff it is irreducible and recurrent, all properties starting under~$\mu$.

\item \label{cond: dychotomy} If $Y$ is irreducible, then it is either recurrent or transient, all properties starting under $\mu$.


\end{enumerate}
\end{lemma}
The assumption in Condition~\ref{cond: recur finite} is not vacuous since in general, a Borel measure on $\XX$ may be infinite on every non-empty open set. For example, if $\XX$ is separable,  take a sum of $\delta$-measures at the points of a dense countable subset of $\XX$; see Lemma~\ref{lem: Radon} below for conditions to exclude such pathologies. Condition~\ref{cond: dychotomy} extends the standard recurrence--transience dichotomy for countable space chains.
 
\begin{proof}
\ref{cond: conserv}) For the direct implication, note that since $\mu$ is $\sigma$-finite, $\XX^{\N_0}$ can be exhausted by countably many cylindrical sets $C_{B_n}$ with bases $B_n \in \BB(\XX)$ of finite measure. Each  set has measure $\PP_\mu^Y(C_{B_n})=\mu(B_n) < \infty$ and is recurrent for $\theta$ by recurrence of $Y$. Then $\theta$ is conservative by the Conditions for conservativity from the Appendix. For the reverse implication, every measurable cylindrical set $C_B$ is recurrent for $\theta$ by conservativity of $\theta$, hence $Y$ is recurrent. 

\ref{cond: recur}) These are the necessary and sufficient conditions appeared in the proof of Condition~\ref{cond: conserv}. 

\ref{cond: recur finite}) This holds by Maharam's recurrence theorem from the Appendix restated for $Y$ using Condition~\ref{cond: conserv}. Here  $\tau_{C_B} $ is finite $\PP_\mu^Y$-a.e.\ and  $\PP_\mu^Y(C_B)= \P_\mu((Y_1, \ldots, Y_k) \in B) < \infty$.

\ref{cond: ergodic}) The direct implication holds since every $\theta$-invariant cylindrical set $C_B$ with one-dimensional base $B \in \BB(\XX)$ is $\PP_\mu^Y$-trivial. The reverse one is in~\cite[Proposition~1.7]{Kaimanovich}. 

\ref{cond: dychotomy}) This is stated in~\cite[Theorem~1.2]{Kaimanovich}. Since there is neither formal proof nor exact reference given there, let us comment that this claim follows from  Foguel~\cite[Chapter~II]{Foguel}. In more detail, we have $\XX = C \cup D$, where $C$ and $D$ are respectively {\it conservative} and {\it dissipative} parts, which are disjoint and measurable. It follows (see~\cite[p.~17]{Foguel}) from  irreducibility of $Y$ that $\P_x(Y_1 \in C)=\I_C(x) \Mod{\mu}$. Then either $C=X \Mod{\mu}$, in which case $Y$ is recurrent by~\cite[Eq.~(2.4)]{Foguel}, or $D=X \Mod{\mu}$, in which case $Y$ is transient by repeating the argument after~\cite[Eq.~(2.4)]{Foguel} (for any $B \in \BB(\XX)$ such that $\mu(B)<\infty$, take $f= \I_B$ and $u = \I_{B_M}$, where $B_M:=\big \{x \in B: \sum_{k=0}^\infty \frac{d}{d \mu }\P_{{\mu|}_B} (Y^k \in dx) \le M \big \}$, and let $M \to \infty$). 
\end{proof}

We will also consider non-recurrent Markov chains, in which case we shall work with {\it invertible} measure preserving transformations on spaces of sequences. The shift $\theta$ on one-sided sequences in $\XX^{\N_0}$ is not invertible, therefore we shall extend time to negative integers. This corresponds to the {\it natural extension} in ergodic theory. It can be constructed using a standard argument (see Doob~\cite[Chapter~X.1]{Doob}) based on Kolmogorov's consistency theorem. However, this theorem may cease to hold if the space $\XX$ is not Polish (see Bogachev~\cite[Example~7.7.3]{Bogachev}). Therefore,  here we proceed differently.

Consider the space $\XX^\Z$ of two-sided sequences equipped with the $\sigma$-algebra $\BB(\XX)^{\otimes \Z}$. The {\it two-sided shift} $\bar \theta$ on $\XX^\Z$ is given by $(\bar \theta x)_i :=x_{i+1}$, where $x \in \XX^\Z$, $i \in \Z$. It is invertible and $\bar \theta^{-1}$ is measurable, since the collection of measurable sets $B \subset \XX^\Z$ such that $\theta(B)$ is measurable is a $\sigma$-algebra which contains all cylindrical sets.

Define the time-reversal operator $R: \XX^\Z \to \XX^\Z$, given by $R(x)_i:=x_{-i}$, where $i \in \Z$, for $x \in \XX^\Z$. Denote the projection onto $\XX^{\N_0}$ by $x^+:=(x_0, x_1, \ldots)$, and define the mapping $V: \XX^{\N_0} \to \XX^{-\N}$ by $V(y)_{-i}:=y_i$, where $i \in \N$, for $y \in \XX^{\N_0}$. It is easy to see that $R$, $V$, and $(\cdot )^+$ are measurable mappings. Note also that for every $x_0 \in \XX$, the restriction $V|_{C_{\{x_0\}}}$ is bijective and its inverse is measurable by the same argument as we used above for $\bar \theta^{-1}$.

Assume now that there exists a transition kernel $\hat P$ dual to $P$ with respect to $\mu$. This is always true when $\XX$ is Polish space, in which case a dual kernel is unique$\Mod{\mu}$ (we will always denote it by $\hat P$). Indeed, if $\mu$ is a probability measure, then this claim is nothing but the disintegration theorem combined with existence of regular conditional distributions for probability measures on Polish spaces; see Kallenberg~\cite[Theorems~6.3,~6.4, and~A1.2]{Kallenberg} or Aaronson~\cite[Theorem~1.0.8]{Aaronson}. This extends to $\sigma$-finite measures by $\sigma$-additivity. Recall that for a Polish space $\XX$, we have $\mathcal{B}(\XX) \otimes \mathcal{B}(\XX) = \mathcal{B}(\XX \times \XX)$, $\mathcal{B}(\XX)^{\otimes \Z} = \mathcal{B}(\XX^{\Z})$, etc. 

Denote by $\hat \PP_{x_0}$ the distribution on $\BB(\XX)^{\otimes \N_0}$ of a Markov chain with the transition kernel $\hat P$ and starting at  $x_0 \in \XX$; it exists by the Ionescu Tulcea theorem (\cite[Theorem~6.17]{Kallenberg}). We say that the probability measure 
\[
\bar \PP_{x_0}^Y := (\hat \PP_{x_0} \circ V^{-1}) \otimes \PP_{x_0}^Y \qquad \text{ on } \BB(\XX)^{\otimes \Z},
\]
is {\it an extended law} of $Y$ starting at $x_0$, where $\BB(\XX)^{\otimes \Z}$ is understood as  $\BB(\XX)^{\otimes (-\N)} \otimes \BB(\XX)^{\otimes \N_0}$.

We can assume that there is a sequence $\hat Y= (\hat{Y}_n)_{n \ge 0}$ of random elements of $\XX$, defined on the same measurable space $(\Omega, \mathcal F)$ with $Y$, such that $\hat Y_0=Y_0$ and for {\it every} $x_0 \in \XX$, a)~$\hat Y$ is a Markov chain with the transition kernel $\hat P$ under probability $\P_{x_0}$; and b) $ Y$ and $\hat Y$ are {\it independent}, as random elements of $(\XX^{\N_0}, \BB(\XX)^{\otimes \N_0})$, under probability $\P_{x_0}$. 

Then $\hat Y$ is a Markov chain dual to $Y$ with respect to~$\mu$. Moreover, $\hat Y$ is recurrent starting under $\mu$ iff so is $Y$; this follows from Conditions~\ref{cond: conserv} and~\ref{cond: recur} of recurrence and Remark~\ref{rem: measurable inverse}.b.

Indeed, we can define $Y$ and $\hat Y$ on the canonical space $(\Omega, \mathcal F)=(\XX^\Z, \BB(\XX)^{\otimes \Z})$ with $\omega =x$ by taking $Y(\omega)=x^+$ and $\hat Y(\omega)=R(x)^+$ for $x \in \XX^\Z$ and $\P_{x_0}=\bar \PP_{x_0}^Y$ for $x_0 \in \XX$. Then for any $x_0 \in \XX$ and any sets $B_1, B_2 \in \BB(\XX)^{\otimes \N_0} \cap C_{\{x_0\}}$, we have
\[
\P_{x_0} ( \hat Y \in B_1 , Y \in B_2)  = \P_{x_0} ( V(\hat Y) \in V(B_1) , Y \in B_2) = \bar \PP_{x_0}^Y \big( V(R(x)^+) \in V(B_1), x^+ \in B_2 \big),
\]
where the first equality holds true and makes sense since the function $V|_{C_{\{x_0\}}}$ is bijective and its inverse is measurable (hence $V(B_1)$ is measurable). By the equality $x=(V(R(x)^+), x^+)$ and the definition of the extended law $\bar \PP_{x_0}^Y$, we get
\[
\P_{x_0} ( \hat Y \in B_1 , Y \in B_2)   = \bar \PP_{x_0}^Y (V(B_1) \times B_2) = (\hat \PP_{x_0} \circ V^{-1})(V(B_1)) \cdot \PP_{x_0}^Y (B_2).
\]
Since 
$B_1 = \{x_0\} \times B_1'$ for some $B_1' \in \BB(\XX)^{\otimes \N}  $, we have $V^{-1}(V(B_1))= \XX \times B_1'$, hence $\hat \PP_{x_0}\big (V^{-1}(V(B_1)) \big) = \hat \PP_{x_0}(B_1) $ because $\hat \PP_{x_0}$ is supported on $C_{\{x_0\}}$. Therefore, we get
\[
\P_{x_0} ( \hat Y \in B_1 , Y \in B_2)  =\hat \PP_{x_0}  (B_1) \cdot  \PP_{x_0}^Y (B_2),
\]
which implies independence of $Y$ and $\hat Y$ under~$\P_{x_0}$, as required. 

Furthermore, arguing as above, we can obtain
\[
\bar  \PP_{x_0}^Y  = \P_{x_0} \big((V(\hat Y), Y) \in \cdot \big), \qquad x_0 \in \XX. 
\]
Notice that the function $x_0 \mapsto \bar \PP_{x_0}^Y (B)$ is measurable for every $B \in \BB(\XX)^{\otimes \Z}$ since the collection of the sets $B$ with this property is a $\sigma$-algebra (this follows from the monotone convergence theorem) which contains measurable rectangles. Then we can define
\begin{equation} \label{eq: decomposition}
\bar \PP_\mu^Y(B) := \int_\XX \P_{x_0} \big((V(\hat Y), Y) \in B \big) \mu(d x_0), \qquad B \in  \BB(\XX)^{\otimes \Z},
\end{equation}
an {\it extended law} of $Y$ starting under $\mu$, which is simply the law of $(V(\hat Y), Y)$ under $\P_\mu$. When  $\XX$ is a Polish space, such extended law always exists and is unique since the same holds for~$\hat P$.

We now claim that the two-sided shift $\bar \theta$ is an invertible measure preserving transformation of the $\sigma$-finite measure space $(\XX^\Z, \mathcal{B}(\XX)^{\otimes \Z}, \bar \PP_{\mu}^Y)$. Then so is the measurable mapping~$\bar \theta^{-1}$.

It suffices to show that $\bar \theta$ preserves the measure $\bar \PP_{\mu}^Y$ on cylindrical sets whose bases are measurable rectangles. To this end, it suffices to check that for any  sets $B_0, B_1, \ldots \in \BB(\XX)$ and any integers $1 \le k \le n$, we have
\begin{equation} \label{eq: 2sided shift}
\P_\mu(Y_0 \in B_0, \ldots, Y_n \in B_n) = \P_\mu(\hat Y_k \in B_0, \ldots, \hat Y_1 \in B_{k-1}, Y_0 \in B_k, \ldots, Y_{n-k} \in B_n). 
\end{equation}

Denote $f(x_0):= \P_{x_0}(Y_1 \in B_{k+1}, \ldots, Y_{n-k} \in B_n)$ if $k<n$ and $f (x_0):= 1$ if $k=n$. Then 
\begin{align*}
\P_\mu(Y_0 \in B_0, \ldots, Y_n \in B_n) &=\int_\XX \mu(x_0) \int_{B_k}f(x_k)\P_{x_0}(Y_0 \in B_0, \ldots, Y_k  \in d x_k) \\
&= \E_\mu [\I(Y_0 \in B_0, \ldots, Y_k  \in B_k) f(Y_k)]\\
&= \E_\mu [\I(\hat Y_k \in B_0, \ldots, \hat Y_0  \in B_k) f(\hat Y_0)]\\
&= \int_{B_k} \P_{x_0}(\hat Y_k \in B_0, \ldots, \hat Y_1  \in B_{k-1}) f(x_0) \mu (d x_0),
\end{align*}
where in the first equality follows from  the Markov property of $Y$ (under $\P_{x_0}$) and the third one follows from time-reversal equality~\eqref{eq: time-reversal}. This yields required equality~\eqref{eq: 2sided shift} by independence of $Y$ and $\hat Y$ under $\P_{x_0}$ for every $x_0 \in \XX$.

\subsection{Invariant measures of general induced, entrance, and exit Markov chains} \label{sec: Inducing MCs}

The proof of our first result shows that the method of inducing allows us to {\it compute} invariant measures of the Markov chains mentioned. 

Recall that the exit set $A^c_{ex}$ of the chain $Y$ from $A^c$ was defined in \eqref{eq: Aex},  and the entrance set $A_{en}$ of $Y$ into $A$ was defined, in the case when $Y$ has a dual, in \eqref{eq: Aen}.

\begin{theorem} \label{thm: inducing MC}
Let $Y$ be a Markov chain that takes values in a topological space $\XX$ and has a $\sigma$-finite invariant Borel measure $\mu$. Let $A \subset \XX$ be a Borel set.


\begin{enumerate}[1)]
\item Assume that $\mu(A)>0$. Then the induced chain $Y^A$ has a proper non-zero invariant Borel measure $\mu_A:=\mu|_A$ on $A$ if either of the following conditions is true:
\begin{enumerate}[a)]
\item $Y$ is recurrent starting under $\mu$;
\item $\P_{\mu_A} (\tau_A'(Y) = \infty)=0 $ and the same holds (instead of~$Y$) for a Markov chain $\hat Y$ that is dual to $Y$ with respect to $\mu$ and satisfies $\hat Y_0=Y_0$;
\item $\P_{\mu_A} (\tau_A'(Y) = \infty)=0 $ and $\mu(A)<\infty$;
\end{enumerate}
Moreover, if $Y$ is recurrent (resp.\ recurrent and ergodic) starting under $\mu$, then $Y^A$ is recurrent (resp.\ recurrent and ergodic) starting under $\mu_A$.
\item Assume that $\P_{\mu|_{A^c}}( Y_1 \in A)>0$. Then the entrance chain $Y^{\to A}$ and the exit chain $Y^{A^c \to}$ have respective proper non-zero invariant Borel measures 
$$
\mu_A^{entr}:=\int_{A^c} \P_x(Y_1 \in \cdot ) \mu(dx) \text{ on }A \quad \text{and} \quad \mu_{A^c}^{exit}(dx):= \P_x(Y_1 \in A) \mu(dx) \text{ on } A^c,
$$
if either of the following conditions is true:
\begin{enumerate}[a)]
\item $Y$ is recurrent starting under $\mu$;
\item $\P_{\mu|_{A^c_{ex}}}(\tau_{A_{en}}'(Y)=\infty)=\P_{\mu|_{A_{en}}}(\tau_{A^c_{ex}}'(Y)=\infty) = 0$ and the same holds (instead of~$Y$) for a Markov chain $\hat Y$ that is dual to $Y$ with respect to $\mu$ and satisfies $\hat Y_0=Y_0$;
\item $\P_{\mu|_{A^c_{ex}}}(\tau_A'(Y)=\infty)=\P_{\mu|_{A^c_{ex}}}(\tau_{A^c_{ex}}'(Y)=\infty) = 0$ and $\P_{\mu|_{A^c}}( Y_1 \in A) < \infty$.
\end{enumerate}
\end{enumerate}
Moreover, if $Y$ is recurrent (resp.\ recurrent and ergodic) starting under $\mu$, then $Y^{\to A}$ and $Y^{A^c \to}$ are recurrent (resp.\ recurrent and ergodic) starting respectively under $\mu_A^{entr}$ and $\mu_{A^c}^{exit}$.
\end{theorem}

Theorem~\ref{thm: inducing MC} applies to recurrent chains merely if $\P_{\mu|_{A^c}}( Y_1 \in A)>0$, which means that $Y$  can get from $A^c$ to $A$. However, we stress that $Y$ {\it is not required to be recurrent}; instead, there shall exist a dual chain $\hat Y$ satisfying Assumptions 1b and 2b.

The result of Part 1a is not new and it is especially well-known for finite $\mu$. However, the best explicit reference we found, Revuz~\cite[Proposition~2.9 in Chapter 3]{Revuz}, assumes that~$Y$ is Harris-recurrent (this is stronger than recurrence starting under $\mu$). We do not know references for transient $Y$. For Part~2b, we essentially give a second proof of Theorem~\ref{thm: duality}  using the inducing approach (Remark~\ref{rem: duality}  in Section~\ref{sec: duality} explains the assumptions), which we will need later for Part 2b of Proposition~\ref{prop: Kac MC}. 

\begin{remark} \label{rem: Polish}
The assumption of recurrence (in Parts a) is the strongest and the assumptions of Parts~b are the weakest, in the following precise sense.

a) Parts 1c and 2c are secondary but we stated them for the purpose of completeness (on practise, it can be difficult to check irreducibility of $Y$). Indeed, if $Y$ is recurrent starting under $\mu$, then Assumptions 1c and 2c are clearly satisfied. In the opposite direction, if $Y$ is irreducible starting under $\mu$, then either of these assumptions implies, by Condition~\ref{cond: dychotomy} in Section~\ref{sec: setup MCs}, that $Y$ is recurrent starting under $\mu$, since $Y$ cannot be transient. 

b) Assume that $\XX$ is a Polish space. Then the measure $\mu_A^{entr}$, as defined in Theorem~\ref{thm: inducing MC} in a more general setting, has the form given by~\eqref{eq: measures def}; either of Assumptions 1a or 1c implies Assumption 1b; and either of Assumptions 2a or 2c implies Assumption 2b.

Indeed, on a Polish space, there always exists a Markov chain $\hat Y$ dual to $Y$ (see Section~\ref{sec: setup MCs}). Then the formula for $\mu_A^{entr}$ simplifies immediately by the duality. Furthermore, if $Y$ is recurrent starting under $\mu$, then Assumption 1b for $Y$ is satisfied, and Assumption 2b for $Y$ holds by Condition~\ref{cond: conserv} in Section~\ref{sec: setup MCs}. Also, if $Y$ is recurrent starting under $\mu$, then so is the dual chain $\hat Y$ (see Section~\ref{sec: setup MCs}). Then Assumption 1b for $\hat Y$ holds true as just shown. By the same reasoning applied for the set $A^c$ instead of $A$, we see that Assumption 2b for $Y$ is satisfied since  $\P_{\mu|_A}( \hat Y_1 \in A^c)>0$ by the duality and we can write the sets $A_{en} = A_{en, Y}$ and $A^c_{ex}=(A^c)_{ex, Y}$ in terms of $\hat Y$ using equalities \eqref{eq: AenAex symm}. Finally, Assumption 1c (resp.\ 2c) implies Assumption 1b (resp.\ 2b) by Remark~\ref{rem: measurable inverse}.b.

c)  The results of Theorem~\ref{thm: inducing MC} (as well as of Theorem~\ref{thm: duality} and Proposition~\ref{prop: Kac MC}) are purely measure-theoretic and can be restated for any $\sigma$-algebra on $\XX$ instead of $\BB(\XX)$. The assumptions of Parts a, b, c correspond respectively to those of Lemmas~\ref{lem: induced2},~\ref{lem: induced},~\ref{lem: induced'}. 
\end{remark}

\begin{proof}
1) a) By Condition~\ref{cond: conserv} in Section~\ref{sec: setup MCs}, recurrence of $Y$ is equivalent to conservativity of the measure preserving shift $\theta$ on $(\XX^{\N_0}, \mathcal{B}(\XX)^{\otimes \N_0}, \PP_\mu^Y)$. We have $\PP_\mu^Y(C_A)=\mu(A)>0$ and by Lemma~\ref{lem: induced2}, the induced shift $\theta_{C_A}$ is a measure preserving conservative transformation of the induced space $(C_A, \mathcal{B}(\XX)^{\otimes \N_0} \cap C_A, (\PP_\mu^Y)|_{C_A})$. Then for any Borel set $B  \subset A$,
\begin{align*}
\mu_A(B) &= \P_\mu(Y_1 \in B) = (\PP_\mu^Y)|_{C_A}(C_B) \\
&= (\PP_\mu^Y)|_{C_A} \bigl(x \in \XX^{\N_0}: \theta_{C_A}(x) \in C_B \bigr) 
= \P_{\mu_A} \bigl({(\theta_{C_A}(Y))}_0 \in B \bigr) =\P_{\mu_A}(Y_1^A \in B),
\end{align*}
where in the last equality we used definition  \eqref{eq: induced chain} of $Y^A$ and the equality $\theta_{C_A}=(\varphi_{C_A})\bigl . \bigr |_{C_A}$. Thus, the measure $\mu_A$ is invariant for the induced chain $Y^A$. This measure is proper for $Y^A$ by implication \eqref{eq: io} and the fact that $(\PP_\mu^Y)|_{C_A}(\tau_{C_A}=\infty)=0$, which holds by conservativity of $\theta$. Clearly, $\mu_A$ is non-zero since $\mu(A)>0$.

Recurrence of $Y^A$ starting under $\mu_A$ follows trivially from that of $Y$ starting under $\mu$. It remains to obtain ergodicity of the induced chain from ergodicity and recurrence of $Y$. Use representation~\eqref{eq: induced chain} to write the law of the induced chain $Y^A$ starting under $\mu_A$ as $\PP_{\mu_A}^{Y^A} =(\PP_\mu^Y)|_{C_A} \circ \psi^{-1}$, where $\psi: C_A \to A^{\N_0}$ is defined by $\psi(x):= (x_0, (\theta_{C_A}(x))_0, (\theta_{C_A}^2(x))_0, \ldots)$. 

Note that $\psi(\theta_{C_A}(x))= \theta(\psi(x))$ for every $x \in C_A$, implying
\begin{equation} \label{eq: shifting induced}
\theta_{C_A}^{-1} (\psi^{-1}B) = \psi^{-1}(\theta^{-1}B), \qquad B  \in \BB( A)^{\N_0}.
\end{equation}
In particular, this yields that $\theta$ (restricted to $A^{\N_0}$) is measure preserving on $(A^{\N_0},\mathcal{B}(A)^{\otimes \N_0} , \PP_{\mu_A}^{Y^A})$. We already know this since $\mu_A$ is an invariant measure for the chain $Y^A$ and we have $\BB(A)^{\otimes \N_0} = \mathcal{B}(\XX)^{\otimes \N_0} \cap  A^{\N_0}$, which follows from the definition of the induced topology on $A$. To show ergodicity of $\theta$  on $A^{\N_0}$, consider an invariant set $B \in  \BB( A)^{\N_0}$, that is $\theta^{-1} B = B \Mod{\PP_{\mu_A}^{Y^A}}$ or, equivalently, $\psi^{-1} (\theta^{-1} B) = \psi^{-1} B \Mod{(\PP_{\mu}^Y)|_{C_A}}$. By \eqref{eq: shifting induced}, this gives $\theta_{C_A}^{-1} (\psi^{-1} B) = \psi^{-1} B \Mod{(\PP_\mu^Y)|_{C_A}}$, meaning that $\psi^{-1} B$ is an invariant set for $\theta_{C_A}$ on $C_A$. Since $\theta_{C_A}$ is ergodic by Lemma~\ref{lem: induced2}, the set $\psi^{-1}B$ is $(\PP_\mu^Y)|_{C_A}$-trivial, implying that $B$ is $\PP_{\mu_A}^{Y^A}$-trivial. This proves ergodicity of the induced Markov chain $Y^A$ starting under $\mu_A$. 

b) The two-sided shift $\bar \theta$ is an invertible measure preserving transformation of the  measure space $(\XX^\Z, \mathcal{B}(\XX)^{\otimes \Z}, \bar \PP_\mu^Y)$, and so is $\bar \theta ^{-1}$. Denote $\bar C_A:= \overline{C_A}$. This is a cylindrical set in $\XX^\Z$ with no constraints on negative coordinates. Hence
$\bar \PP_{\mu}^Y(\bar C_A) = \PP_\mu^Y(C_A)=\mu(A)>0$ and
\begin{align} \label{eq: zero}
\bar \PP_\mu^Y \bigl(\bar C_A  \setminus \cup_{k \ge 1} \bar \theta^{-k} (\bar C_A) \bigr)& = \bar \PP_{\mu}^Y(x\in \XX^\Z: x_0 \in A,\tau_{\bar C_A} = \infty) \notag \\ 
&= \PP_{\mu}^Y \bigl(x \in \XX^{\N_0}: x_0 \in A, \tau_{C_A} =\infty \bigr) = \P_{\mu_A}(\tau_A'(Y)=\infty)= 0.
\end{align}
In particular, by \eqref{eq: io} this implies that the measure $\mu_A$ is proper for $Y^A$. Similarly, use representation \eqref{eq: decomposition} to get
\[
\bar \PP_\mu^Y \bigl(\bar C_A \setminus \cup_{k \ge 1} \bar \theta^{k} (\bar C_A)  \bigr) = \bar \PP_\mu^Y \bigl(x: x_0 \in A, x_{-1}, x_{-2}, \ldots \not \in A \bigr) = \PP_\mu^{\hat Y} \bigl(x: x_0 \in A, \tau_{C_A} = \infty \bigr)=0.
\]

From Lemma~\ref{lem: induced}, the induced shift $\bar \theta_{\bar C_A}$ is measure preserving on $(\bar C_A, \mathcal{B}(\XX)^{\otimes \Z} \cap \bar C_A, (\bar \PP_\mu^Y)|_{\bar C_A})$. Hence for any Borel set $B \subset A$,
\[
\mu_A(B) = (\bar \PP_\mu^Y)|_{\bar C_A}(\bar C_B) = (\bar \PP_\mu^Y)|_{\bar C_A} \bigl(x: \bar \theta_{\bar C_A}(x) \in \bar C_B \bigr) = 
(\PP_\mu^Y)|_{C_A} \bigl(x: \theta_{C_A}(x) \in C_B \bigr).
\]
The last probability was already shown in (the proof of) Part 1a to be $\P_{\mu_A}(Y_1^A \in B)$. 

c) We have $\PP_\mu^Y(C_A)>0$ and $\PP_{\mu}^Y \bigl(x \in C_A, \tau_{C_A} =\infty \bigr) =0$ arguing as in Part 1b, and the claim follows as in Part 1a if we apply Lemma~\ref{lem: induced'} instead of Lemma~\ref{lem: induced2}.

2) a) We argue as above in Part 1a. The cylindrical set $C_{A^c \times A}$ with two-dimensional base $A^c \times A$ satisfies $\PP_\mu^Y(C_{A^c \times A}) = \P_{\mu|_{A^c}}( Y_1 \in A)>0$. The induced shift $\theta_{C_{A^c \times A}}$ on $(C_{A^c \times A}, \mathcal{B}(\XX)^{\otimes \N_0} \cap C_{A^c \times A}, (\PP_\mu^Y)|_{C_{A^c \times A}})$ is measure preserving and conservative. In particular, implication \eqref{eq: io} and the equality $(\PP_\mu^Y)|_{C_{A^c \times A}}(\theta_{C_{A^c \times A}}=\infty)=0$, which holds by conservativity of $\theta$, give us
\[
0 =(\PP_\mu^Y)|_{C_{A^c \times A}}(\{\theta^k \in C_{A^c \times A} \text{ i.o.}\}^c)=\P_\mu(\{Y_0 \in A^c, Y_1 \in A \} \cap \{Y_k \in A \text{ i.o.}, Y_k \in A^c \text{ i.o.}\}^c).
\]
Hence, by the definitions of the sets $N_A$, $N_{A^c}$ and the measures $\mu_{A^c}^{exit}$, $\mu_A^{entr}$,  we get
\begin{align*}
0 &=  \P_\mu(Y_0 \in A^c \setminus (N_A \cap N_{A^c}), Y_1 \in A ) + \P_\mu(Y_0 \in A^c, Y_1 \in A \setminus (N_A \cap N_{A^c})) \\
&= \mu_{A^c}^{exit}(A^c \setminus (N_A \cap N_{A^c})) + \mu_A^{entr}(A \setminus (N_A \cap N_{A^c})).
\end{align*}
Thus, both measures $\mu_{A^c}^{exit}$ and $\mu_A^{entr}$ are proper for the chains $Y^{\to A^c}$ and $Y^{A \to}$. They are non-zero by
\[
\mu_A^{entr}(A) = \mu_{A^c}^{exit}(A^c) = \P_\mu  (Y_0 \in A^c, Y_1 \in A) = \P_{\mu|_{A^c}}( Y_1 \in A)>0.
\]

We now give the key computation which shows the use of inducing in {\it finding} invariant measures. By Lemma~\ref{lem: induced2}, for any measurable set $B \subset A^c \times A$,
\begin{align*}
\P_\mu((Y_0, Y_1) \in B) = (\PP_\mu^Y)|_{C_{A^c \times A}}(C_B) &= (\PP_\mu^Y)|_{C_{A^c \times A}} \bigl(x \in \XX^{\N_0}: \theta_{C_{A^c \times A}}(x) \in C_B \bigr) \\
&= \P_{\mu} \bigl((Y_0, Y_1) \in A^c \times A, {(\theta_{C_{A^c \times A}}(Y))}_{\{0,1\}} \in B \bigr).
\end{align*}
Combining this with representation \eqref{eq: positions chains} and the identity $\theta_{C_{A^c \times A}}=(\varphi_{C_{A^c \times A}})\bigl . \bigr |_{C_{A^c \times A}}$, we obtain 
\[
\P_\mu((Y_0, Y_1) \in B) = \P_{\mu} \bigl((Y_0, Y_1) \in A^c \times A, (Y_2^{A^c \to}, Y_2^{\to A}) \in B \bigr).
\]

Let us take $B=A^c \times B_1$, where $B_1 \subset A$ is an arbitrary Borel set. Then 
\begin{align} \label{eq: entr chain starts}
\mu_A^{entr}(B_1) &= \P_{\mu} \bigl((Y_0, Y_1) \in A^c \times A, Y_2^{\to A} \in B_1 \bigr) \notag\\
&= \int_{A^c} \mu(d x_0) \int_A \P_{x_0}(Y_2^{\to A} \in B_1 | Y_1 = x_1) \P_{x_0} (Y_1 \in d x_1) \notag\\
&= \int_{A^c} \mu(d x_0) \int_A \P_{x_1}(Y_1^{\to A} \in B_1) \P_{x_0} (Y_1 \in d x_1) \notag\\
&=\int_A P_A^{entr}(x_1, B_1)  \mu_A^{entr}(dx_1), 
\end{align}
where in the third equality we used the Markov property of $Y$ and in the last one we used that the measure $\mu_A^{entr}$ is proper for the entrance chain $Y^{\to A}$ (its transition kernel is $P_A^{entr}$). Thus, $\mu_A^{entr}$ is invariant for $Y^{\to A}$. 

Similarly, let us take $B=B_0 \times A$, where $B_0 \subset A^c$ is an arbitrary Borel set. Then
\begin{align}  \label{eq: exit chain starts}
\mu_{A^c}^{exit}(B_0) &=\P_{\mu} \bigl((Y_0, Y_1) \in A^c \times A, Y_2^{A^c \to} \in B_0 \bigr) \notag\\
&= \int_{A^c} \mu(d x_0) \int_A \P_{x_1}(Y_1^{A^c \to} \in B_0) \P_{x_0} (Y_1 \in d x_1) \notag\\
&= \int_{A^c_{ex}} \P_{x_0} (Y_1 \in A) \mu(d x_0) \int_A \P_{x_1}( Y_1^{A^c \to} \in B_0 ) \P_{x_0} (Y_1 \in d x_1| Y_1 \in A) \notag\\
&= \int_{A^c_{ex}} P_{A^c}^{exit}(x_0, B_0)  \mu_{A^c}^{exit}(dx_0), 
\end{align}
where in the last equality we used formula \eqref{eq: exit chain} for the transition kernel $P_{A^c}^{exit}$ of the entrance chain $Y^{A^c \to}$ and the fact that the measure $\mu_{A^c}^{exit}$ is proper for  $Y^{A^c \to}$. Thus, $\mu_{A^c}^{exit}$ is invariant for $Y^{A^c \to}$.

Recurrence of the entrance chain trivially follows from conservativity of the induced transformation $\theta_{C_{A^c \times A}}$ if we argue as above in \eqref{eq: entr chain starts} to start $Y^{\to A}$ under  $\mu_A^{entr}$. Similarly, the exit chain is recurrent starting under $\mu_{A^c}^{exit}$; cf.~\eqref{eq: exit chain starts}.

As for ergodicity, define the functions $\psi_0: C_{A^c \times A} \to (A^c)^\N$ and $\psi_1: C_{A^c \times A} \to A^\N$ by
\[
\psi_i(x):= (x_i, (\theta_{C_{A^c \times A}}(x))_i, (\theta_{C_{A^c \times A}}^2(x))_i, \ldots), \quad x \in C_{A^c \times A}, \, i \in \{0,1\}.
\]
The entrance chain $(Y^{\to A}_n)_{n \ge 1}$ starts from $Y_1^{\to A}$, which is $Y_1$ on the event  $\{Y_0 \in A^c, Y_1 \in A\}$. Since for any Borel set $B \subset A$ it is true that
\[
\mu_A^{entr}(B) = \P_\mu(Y_0 \in A^c, Y_1 \in B) = \PP_\mu^Y\bigl(x \in C_{A^c \times A}: x_1 \in B \bigr),
\]
we can write the law of  $(Y^{\to A}_n)_{n \ge 1}$ with $Y_1^{\to A}$ distributed according to $\mu_A^{entr}$ as $(\PP_\mu^Y)|_{C_{A^c \times A}} \circ \psi_1^{-1}$. Similarly, the law of the exit chain $(Y^{A^c \to}_n)_{n \ge 1}$ with $Y_1^{A^c \to}$ following $\mu_A^{exit}$ is $(\PP_\mu^Y)|_{C_{A^c \times A}} \circ \psi_0^{-1}$. Then ergodicity of the chains $Y^{\to A}$ and $Y^{A^c \to}$ follows from that of $Y$ exactly as in Part~1.

b) We prove as in Part 1b. The cylindrical set $\bar C_{A^c \times A}$ satisfies $\bar \PP_\mu^Y(\bar C_{A^c \times A})= \P_{\mu|_{A^c}}(Y_1 \in A)>0$. Arguing similarly to \eqref{eq: zero} and using that $\tau_{A^c \times A}'(Y) = \tau_{A^c_{ex} \times A_{en}}'(Y) \Mod{\P_\mu}$, we get
\begin{align} \label{eq: bar C hit} 
&\mathrel{\phantom{=}}  \bar \PP_\mu^Y \bigl(\bar C_{A^c \times A}  \setminus \cup_{k \ge 1} \bar \theta^{-k} (\bar C_{A^c \times A}) \bigr) \notag \\
&= \P_{\mu} \bigl(Y_0 \in A^c, Y_1 \in A, \tau_{A^c \times A}'(Y ) =\infty \bigr) \notag \\
&= \P_\mu(Y_0 \in A^c_{ex}, Y_1 \in A_{en}, Y_2, Y_3, \ldots \not \in A^c_{ex}) \notag \\
&\mathrel{\phantom{=}} + \sum_{k=1}^\infty \P_\mu\bigl(Y_0 \in A^c_{ex}, Y_1 \in A_{en}, Y_2, \ldots, Y_k \not \in A^c_{ex}, Y_{k+1} \in A^c_{ex}, Y_{k+2}, Y_{k+3}, \ldots \not \in A_{en}\bigr) \notag \\ 
&\le \P_{\mu|_{A_{en}}}(\tau_{A^c_{ex}}'(Y)=\infty) + \infty \cdot \P_{\mu|_{A^c_{ex}}} (\tau_{A_{en}}'(Y) = \infty) =0.
\end{align}
In particular, this implies (as in Part 1b) that both measures $\mu_{A^c}^{exit}$ and $\mu_A^{entr}$ are proper for the chains $Y^{\to A^c}$ and $Y^{A \to}$.

Further, by representation \eqref{eq: decomposition} and invariance of  the right two-sided shift $\bar \theta^{-1}$, we get
\begin{align*}
&\mathrel{\phantom{=}}  \bar \PP_\mu^Y \bigl(\bar C_{A^c \times A} \setminus \cup_{k \ge 1} \bar \theta^{k} (\bar C_{A^c \times A})  \bigr) \\
&= \bar \PP_\mu^Y \bigl(x \in \XX^\Z: (x_0,x_1) \in A^c \times A, (x_{-1}, x_0)\not \in A^c \times A, (x_{-2}, x_{-1})\not \in A^c \times A, \ldots  \bigr) \\
&= \bar \PP_\mu^Y \bigl(x \in \XX^\Z: (x_0,x_{-1}) \in A \times A^c, (x_{-1}, x_{-2})\not \in A \times A^c, (x_{-2}, x_{-3})\not \in A \times A^c, \ldots  \bigr) \\
&= \P_\mu \bigl(\hat Y_0 \in A, \hat Y_1 \in A^c, \tau_{A \times A^c}'(\hat Y) = \infty \bigr) =0,
\end{align*}
where the last equality follows from $\tau_{A \times A^c}'(\hat Y) = \tau_{ A_{en} \times A^c_{ex} }'(\hat Y) \Mod{\P_\mu}$ exactly as in \eqref{eq: bar C hit}.

By Lemma~\ref{lem: induced}, the induced two-sided shift $\bar \theta_{\bar  C_{A^c \times A}  }$ is a measure preserving transformation of $(\bar  C_{A^c \times A}, \mathcal{B}(\XX)^{\otimes \Z} \cap \bar  C_{A^c \times A}, (\bar \PP_\mu^Y)|_{\bar C_{A^c \times A}})$. Then the equality $(\bar \PP_\mu^Y)|_{\bar C_{A^c \times A}}(\bar C_B) = (\bar \PP_\mu^Y)|_{\bar C_{A^c \times A}} \bigl(\bar \theta_{\bar C_{A^c \times A}} \in \bar C_B \bigr)$ holds for any measurable set $B \subset A^c \times A$, and thus
\[
\P_\mu((Y_0, Y_1) \in B) = (\PP_\mu^Y)|_{C_{A^c \times A}} \bigl(\theta_{C_{A^c \times A}} \in C_B \bigr).
\]
This equality implies invariance of $\mu_{A^c}^{exit}$ and $\mu_A^{entr}$ as required, as already shown in Part~2a. 

c) We have $\PP_\mu^Y(C_{A^c \times A})>0$ and $\PP_{\mu}^Y \bigl(x \in C_{A^c \times A}, \tau_{C_{A^c \times A}} =\infty \bigr) =0$ arguing as in Part 2b, and the claim follows as in Part 2a if we apply Lemma~\ref{lem: induced'} instead of Lemma~\ref{lem: induced2}.
\end{proof}

Notice that we could have proved Part 2 by applying the result of Part 1  to the bivariate chain $Z=((Y_n, Y_{n+1}))_{n \ge 0}$, which
takes values in the space $\XX \times \XX$ (equipped with $\BB(\XX) \otimes \BB(\XX)$, cf.~Remark~\ref{rem: Polish}.c), has an invariant measure $\P_\mu((Y_0, Y_1) \in \cdot)$, and satisfies the relation
\begin{multline} \label{eq: reduction 2d}
\shoveleft{Z^{A^c \times A}_{n-1}=(Y^{A^c \to}_n, Y^{\to A}_n) \qquad \quad \text{ on } \{Y_0 \in A^c, Y_1 \in A\}, \, n \in \N,}
\end{multline}
following from \eqref{eq: positions chains}. However, the proof of Part 2 we presented appears to be the most natural since the full trajectory of the Markov chain $Y$ is already at our disposal due to the setup used. Moreover, our general proof allows other applications of inducing (to be considered in~\cite{Vysotsky2018}), based on the use of stopping times which are more complicated than the hitting times $T^{\to A}$ considered here,  where reduction to a multivariate chain is not possible.

\medskip

We next present occupation time formulas for lifting invariant measures of the induced and entrance chains to recover the invariant measure of the underlying Markov chain. The assumptions of the following result are stronger than those of respective parts of Theorem~\ref{thm: inducing MC}.

\begin{proposition} \label{prop: Kac MC}
Let $Y$ be a Markov chain that takes values in a topological space $\XX$ and has a $\sigma$-finite non-zero invariant measure $\mu$. Let $A \in \BB( \XX)$ be such that $\P_\mu(\tau_A'(Y)=\infty)=0$. 
\begin{enumerate}[1)]
\item We have
\begin{equation} \label{eq: Kac MC 1}
\P_\mu(Y \in E) = \E_{\mu_A} \! \left [ \sum_{k=0}^{\tau_A'(Y) - 1} \I\bigl((Y_k, Y_{k+1}, \ldots) \in E \bigr) \right ] \!, \qquad E \in \mathcal{B}(\XX)^{\otimes \N_0},
\end{equation}
if either of the following conditions is true:
\begin{enumerate}[a)]
\item $Y$ is recurrent starting under $\mu$; 
\item there is a dual chain $\hat Y$ satisfying  $\P_{\mu_A} (\tau_A'(\hat Y) = \infty)=0 $.
\end{enumerate}
\item We have
\begin{equation} \label{eq: Kac MC 2}
\P_\mu(Y \in E) = \E_{\mu_A^{entr}} \! \left [ \sum_{k=0}^{T_1^{\to A} - 1} \I\bigl((Y_k, Y_{k+1}, \ldots) \in E \bigr) \right ] \!, \qquad E \in \mathcal{B}(\XX)^{\otimes \N_0},
\end{equation}
if $\P_{\mu}(\tau_{A^c}'(Y)=\infty)=0$ and either of the following conditions is true:
\begin{enumerate}[a)]
\item $Y$ is recurrent starting under $\mu$; 
\item there is a dual chain $\hat Y$ satisfying $\P_{\mu|_{A^c_{ex}}}(\tau_{A_{en}}'( \hat Y)=\infty)=\P_{\mu|_{A_{en}}}(\tau_{A^c_{ex}}'(\hat Y)=\infty) = 0$.
\end{enumerate}
\end{enumerate}
\end{proposition}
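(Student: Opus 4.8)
The plan is to read both identities as the lifting equality $\overbar{m_A}=m$ of Lemma~\ref{lem: induced3}, applied to the shift dynamics of $Y$ with the inducing set chosen to match the relevant clock, and to recycle the recurrence and measure-preservation facts already proved in Theorem~\ref{thm: inducing MC}. Write $m:=\PP_\mu^Y$ and $\bar m:=\bar\PP_\mu^Y$; then the left sides of \eqref{eq: Kac MC 1} and \eqref{eq: Kac MC 2} are $m(E)=\bar m(\bar E)$, while the right sides, once unwound, are the Kac integrals \eqref{eq: Kac} for the shift. The standing assumption $\P_\mu(\tau_A'(Y)=\infty)=0$ serves two purposes: it supplies the hypothesis $X=\cup_{k\ge1}T^{-k}A\Mod m$ of the \emph{moreover} clause of Lemma~\ref{lem: induced3}, and by invariance it forces $\mu(A)>0$ (else $\P_\mu(\tau_A'<\infty)\le\sum_n\mu(A)=0$, a contradiction).

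For Part 1 I apply Lemma~\ref{lem: induced3} with $\nu=m_A:=m|_{C_A}$. Under (a) I take $T=\theta$ on $\XX^{\N_0}$ with $\theta$ conservative, so Lemma~\ref{lem: induced2} applies and makes $m_A$ a $T_A$-invariant measure; under (b) I pass to $T=\bar\theta$ on $\XX^\Z$ with the set $\bar C_A$, and Lemma~\ref{lem: induced} applies once I check invertibility of $\bar\theta$ and the dual recurrence $\bar C_A\subset\cup_{k\ge1}\bar\theta^k\bar C_A$, which by the computations around \eqref{eq: zero} and \eqref{eq: decomposition} reduces to hypothesis 1(b), namely $\P_{\mu_A}(\tau_A'(\hat Y)=\infty)=0$. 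In either case the standing assumption gives $X=\cup_{k\ge1}T^{-k}C_A\Mod m$, so the \emph{moreover} clause of Lemma~\ref{lem: induced3} yields $\overbar{m_A}=m$; substituting $\tau_{C_A}(x)=\tau_A'(x^+)$, $\theta^k x=(x_k,x_{k+1},\dots)$ and $m_A=\PP_{\mu_A}^Y$ on forward coordinates turns $\overbar{m_A}(E)=m(E)$ into \eqref{eq: Kac MC 1}.

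Part 2 is the same with the two-dimensional base $C_{A^c\times A}$ (resp.\ $\bar C_{A^c\times A}$). The extra assumption $\P_\mu(\tau_{A^c}'(Y)=\infty)=0$, together with $\P_\mu(\tau_A'(Y)=\infty)=0$ and invariance, makes $Y$ visit $A$ and $A^c$ infinitely often $\P_\mu$-a.s., hence perform $A^c\!\to\!A$ transitions infinitely often; this yields $m(C_{A^c\times A})>0$, recurrence of $C_{A^c\times A}$, and $X=\cup_{k\ge1}T^{-k}C_{A^c\times A}\Mod m$. Under (a) conservativity of $\theta$ supplies the Lemma~\ref{lem: induced2} input; under (b) the two-sided argument of Theorem~\ref{thm: inducing MC}, Part 2b, supplies the Lemma~\ref{lem: induced} input, its dual-recurrence step being precisely hypothesis 2(b). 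Thus Lemma~\ref{lem: induced3} gives $\overbar{m_{C_{A^c\times A}}}=m$, i.e.\ $\P_\mu(Y\in E)=\E_\mu[\sum_{k=0}^{\tau_{C_{A^c\times A}}(Y)-1}\I((Y_k,Y_{k+1},\dots)\in E);(Y_0,Y_1)\in A^c\times A]$.

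The one genuinely new step, which I expect to be the main obstacle, is to rewrite this lifted formula in terms of $\mu_A^{entr}$ and $T_1^{\to A}$ as in \eqref{eq: Kac MC 2}. I would first record, from the Markov property and \eqref{eq: mu entr}, the identity $\E_{\mu_A^{entr}}[g(Y)]=\E_\mu[g(Y_1,Y_2,\dots);(Y_0,Y_1)\in A^c\times A]$, together with the pathwise equality $T_1^{\to A}(Y_1,Y_2,\dots)=\tau_{C_{A^c\times A}}(Y)$ on that event; these turn the right side of \eqref{eq: Kac MC 2} into $\E_\mu[\sum_{j=1}^{\tau_{C_{A^c\times A}}(Y)}\I((Y_j,\dots)\in E);(Y_0,Y_1)\in A^c\times A]$, which differs from the lifted sum $\sum_{k=0}^{\tau-1}$ only by the boundary terms $j=0$ and $j=\tau$. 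These coincide because $(Y_{\tau_{C_{A^c\times A}}},\dots)=\theta_{C_{A^c\times A}}(Y)$ and $\theta_{C_{A^c\times A}}$ preserves $m|_{C_{A^c\times A}}$. A slicker route avoiding this bookkeeping is to induce $\bar\theta$ directly on the entrance configuration $D:=\{x:(x_{-1},x_0)\in A^c\times A\}=\bar\theta\,\bar C_{A^c\times A}$: there $\tau_D=T_1^{\to A}$ and, by stationarity, the forward law of $\bar m|_D$ is $\PP_{\mu_A^{entr}}^Y$, so \eqref{eq: Kac MC 2} falls out of $\overbar{\bar m|_D}=\bar m$ directly, the hypotheses for $D$ being those for $\bar C_{A^c\times A}$ carried over by the measure-preserving bijection $\bar\theta$.
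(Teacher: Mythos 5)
Your proposal is correct and follows essentially the same route as the paper: both derive \eqref{eq: Kac MC 1} and \eqref{eq: Kac MC 2} from the lifting identity $\overbar{m_A}=m$ of Lemma~\ref{lem: induced3}, applied to the one-sided shift (via conservativity, Lemma~\ref{lem: induced2}) in the recurrent cases and to the two-sided natural extension $\bar\theta$ with the sets $\bar C_A$, $\bar C_{A^c\times A}$ (via Lemma~\ref{lem: induced}, with the dual-recurrence hypotheses 1(b), 2(b) checked exactly as around \eqref{eq: zero} and \eqref{eq: bar C hit}) otherwise. Your index-shifting step $\sum_{k=0}^{\tau-1}\to\sum_{k=1}^{\tau}$ justified by $\theta_{C_{A^c\times A}}$-invariance of $(\PP_\mu^Y)|_{C_{A^c\times A}}$, followed by the Markov property, \eqref{eq: T <-> tau} and the definition of $\mu_A^{entr}$, is precisely the paper's concluding computation in Part 2a, so the ``genuinely new step'' you flagged is handled the same way there (your alternative of inducing directly on $D=\bar\theta\,\bar C_{A^c\times A}$ is a valid minor variant).
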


Thus, we can recover $\mu$ from $\mu_A$ or $\mu_A^{entr}$ using \eqref{eq: Kac MC 1} or \eqref{eq: Kac MC 2} with $E=C_B$ for $B \in \mathcal{B}(\XX)$.

\begin{proof}
1) The  equality $\P_\mu(\tau_A'(Y)=\infty)=0$ ensures that $\mu(A)>0$ by
\begin{equation} \label{eq: A non-zero}
0< \mu(\XX) = \sum_{k=1}^\infty \P_\mu(\tau_A'(Y)=k) \le \sum_{k=1}^\infty \P_\mu(Y_k \in A) = \infty \cdot \mu(A).
\end{equation}
Thus, Assumptions 1a and 1b of the proposition are respectively stronger than Assumptions~1a and 2b of Theorem~\ref{thm: inducing MC}.

b) As we have seen in the proof of Part 1b of Theorem~\ref{thm: inducing MC}, the two-sided shift $\bar \theta$ is an invertible measure preserving transformation of $(\XX^\Z, \mathcal{B}(\XX)^{\otimes \Z}, \bar \PP_\mu^Y)$ satisfying the assumptions of Lemma~\ref{lem: induced}. For the cylindrical set $\bar C_A$, we have $\bar \PP_\mu^Y(\bar C_A) = \mu(A)>0$ and $\bar \PP^Y_{\mu} (\tau_{\bar C_A} = \infty) = \P_\mu(\tau_A'(Y)=\infty) = 0$. Thus, all the assumptions of Lemma~\ref{lem: induced3} are satisfied. From \eqref{eq: Kac} and \eqref{eq: decomposition} it follows that for any $E \in \mathcal{B}(\XX)^{\otimes \N_0}$,
\[
\PP_\mu^Y(E)= \bar \PP_\mu^Y(\bar E) = \int_{\bar C_A} \! \left [ \sum_{k=0}^{\tau_{\bar C_A}(x)-1} \I(\bar \theta^k x \in \bar E) \right ] \! \bar \PP_\mu^Y(dx) = \int_{C_A} \! \left[ \sum_{k=0}^{\tau_{C_A}(x)-1} \I(\theta^k x \in E ) \right] \!\PP_\mu^Y(dx).
\]
This implies \eqref{eq: Kac MC 1} by the equality $\tau_{C_A}=\tau_A'$, as required.

 a) As we have seen in the proof of Part 1a of Theorem~\ref{thm: inducing MC}, the one-sided shift $\theta$ is a measure preserving conservative transformation of $(\XX^{\N_0}, \mathcal{B}(\XX)^{\otimes \N_0}, \PP_\mu^Y)$. For the cylindrical set $C_A$, we have $\PP_\mu^Y(C_A) = \mu(A)>0$ and $\PP^Y_{\mu} (\tau_{C_A} = \infty) = \P_\mu(\tau_A'(Y)=\infty) = 0$. Thus, all the assumptions of Lemmas~\ref{lem: induced2} and~\ref{lem: induced3} are satisfied for the one-sided shift $\theta$ and the set $C_A$, and \eqref{eq: Kac MC 1} follows directly from formula \eqref{eq: Kac} as shown in the proof of Part 1b above.

2) Note that $\P_\mu(Y_0 \in A^c, Y_1 \in A)>0$. In fact, relation \eqref{eq: A non-zero} applied for $A^c$ instead of $A$ and equality $\P_{\mu}(\tau_{A^c}'(Y)=\infty)=0$ imply that $\mu(A^c)>0$. Then the assertion follows from the equality $\P_{\mu_{A^c}}(\tau_A'(Y)=\infty)=0$ by an argument analogous to~\eqref{eq: A non-zero}. Thus, Assumptions 2a and 2b of the proposition are respectively stronger than Assumptions 2a and 2b of Theorem~\ref{thm: inducing MC}.

Then for the cylindrical set $C_{A^c \times A}$, we have $\PP_\mu^Y(C_{A^c \times A}) = \P_\mu(Y_0 \in A^c, Y_1 \in A)>0$ and $\PP^Y_{\mu} (\tau_{C_{A^c \times A}} = \infty) = 0$. The latter equality follows from the assumptions $\P_{\mu}(\tau_A'(Y)=\infty)=0$ and $\P_{\mu}(\tau_{A^c}'(Y)=\infty)=0$ by the same argument as in \eqref{eq: bar C hit}.

a) All the assumptions of Lemmas~\ref{lem: induced2} and~\ref{lem: induced3} are satisfied for the one-sided shift $\theta$ on $(\XX^{\N_0}, \mathcal{B}(\XX)^{\otimes \N_0}, \PP_\mu^Y)$ and the set $C_{A^c \times A}$. Then for any $ E \in \mathcal{B}(\XX)^{\otimes \N_0}$, we have
\[
\PP_\mu^Y(E) = \int \limits_{C_{A^c \times A}}  \!\! \left [ \sum_{k=1}^{\tau_{C_{A^c \times A}}(x)} \I(\theta^k x \in E ) \right ]  \!\PP_\mu^Y(dx)
= \int \limits_{A^c} \E_{x_0} \!\! \left [ \sum_{k=1}^{\tau_{A^c \times A}'(Y)} \I(\theta^k Y \in E, Y_1 \in A ) \right ] \! \mu(d x_0),
 \]
where in the first equality we applied \eqref{eq: Kac} and shifted the summation indices by one using  invariance of $(\PP_\mu^Y)|_{C_{A^c \times A}}$ under the induced shift $\theta_{C_{A^c \times A}}$. By the Markov property, we get
\[
\PP_\mu^Y(E) =\int_{A^c} \mu(d x_0) \int_A \E_{x_1} \!\! \left [\sum_{k=0}^{\tau_{A^c \times A}'(Y)} \I(\theta^k Y \in E) \right] \!\P_{x_0}(Y_1 \in d x_1).
\]
This implies \eqref{eq: Kac MC 2}  by definition of $\mu_A^{entr}$, the fact that this measure is proper for the entrance chain $T^{\to A}$, and  the equality $T_1^{\to A} = \tau_{A^c \times A}'(Y)  + 1$ on $\{Y_0 \in A,Y_k \in A \text{ i.o.}, Y_k \in A^c \text{ i.o.}\}$. 

b) The two-sided shift $\bar \theta$ on $(\XX^\Z, \mathcal{B}(\XX)^{\otimes \Z}, \bar \PP_\mu^Y)$ and the set $\bar C_{A^c \times A}$ satisfy the assumptions of Lemmas~\ref{lem: induced} and~\ref{lem: induced3}. Then equality \eqref{eq: Kac MC 2} follows from a computation similar to those in the proofs of Parts 1b and 2a above.
\end{proof}

\subsection{Existence and uniqueness for weak Feller chains} \label{sec: MC e and u}
In this section we apply the general results and ideas developed in Section~\ref{sec: Inducing MCs} for a more specific class of topologically recurrent weak Feller Markov chains. For existence and uniqueness results on invariant measures under much stronger assumptions on $Y$, such as strong Feller or Harris properties or $\psi$-irreducibility, see Foguel~\cite[Chapters~IV and VI]{Foguel} and Meyn and Tweedie~\cite{MeynTweedie}.

We assume throughout that $\XX$ is a metric space. A Markov chain $Y$ with values in $\XX$ is called  {\it weak Feller} if its transition kernel $P(x, \cdot)$ is weakly continuous in $x$. A Borel measure on $\XX$ is called {\it locally finite} if every point of $\XX$ admits an open neighbourhood of finite measure. Such measures are finite on compact sets. Also, they are $\sigma$-finite if $\XX$ is separable  since in this case the space can be represented as a countable union of open balls of finite measure.  Locally finite measures on Polish spaces are often called {\it Radon}. 

The main result of the section (supplemented by Proposition~\ref{prop: Skorokhod} below) is as follows.



\begin{theorem} \label{thm: inducing bijection}
Let $Y$ be a topologically irreducible topologically recurrent weak Feller Markov chain that takes values in a Polish space $\XX$.  Let $A \subset \XX$ be a Borel set with $\Int(A) \neq \varnothing$. 
\begin{enumerate}[1)]
\item The mapping $\mu \mapsto \mu_A$ is a bijection between the sets of locally finite Borel invariant measures of the chain $Y$ on $\XX$ and of the induced chain $Y^A$ on $A$. 
\item Assume that there exists an $x \in \Int(A^c)$ such that $\P_x(Y_1 \in \Int(A))>0$. Then the mappings $\mu \mapsto \mu_A^{entr}$ and $\mu \mapsto \mu_{A^c}^{exit}$ (defined in \eqref{eq: measures def}) are bijections between the sets of locally finite Borel invariant measures of the chain $Y$ on $\XX$ and, respectively, of the entrance chain $Y^{\to A}$ on $A$ and  the exit chain $Y^{A^c \to}$ on $A^c$.
\end{enumerate}
\end{theorem}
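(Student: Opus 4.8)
The plan is to realize each claimed bijection by an \emph{explicit} inverse: the forward map is restriction $\mu\mapsto\mu_A=\mu|_A$, and the backward map is the Kac occupation lift $\nu\mapsto\bar\nu$ of \eqref{eq: Kac}/\eqref{eq: Kac MC 1}. Once both maps are shown to land in the correct classes of \emph{locally finite} invariant measures, the bijection is forced by the two identities $\overline{\mu_A}=\mu$ and $\bar\nu|_A=\nu$: the first gives injectivity (and that $\mu$ is reconstructible from $\mu_A$), the second gives surjectivity (with $\bar\nu$ a preimage of $\nu$). Invariance of the images is already supplied by Theorem~\ref{thm: inducing MC} and Proposition~\ref{prop: Kac MC}/Lemma~\ref{lem: induced3}; what is genuinely new is the preservation of \emph{local finiteness} in the lifting direction, on which the whole argument turns. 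I treat Part~1 in detail, Part~2 being parallel. Throughout I fix a non-zero locally finite invariant $\mu$ (the case $\mu=0$, and the possibility that no invariant measure exists, being vacuous for a statement about a bijection of sets).

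First I would extract from the three topological hypotheses the facts used repeatedly. \emph{Full support:} if $\mu(U)=0$ for a non-empty open $U$, then invariance gives $\int\P_x(Y_n\in U)\,\mu(dx)=\mu(U)=0$ for every $n\ge1$, so $\P_x(\tau_U'(Y)<\infty)=0$ for $\mu$-a.e.\ $x$, contradicting topological irreducibility; hence $\mu$ charges every non-empty open set. \emph{Recurrence starting under $\mu$:} covering $\XX$ by open balls of finite $\mu$-measure (possible by separability and local finiteness), each of which is recurrent by topological recurrence, the first Condition for recurrence applies. \emph{Conventional recurrence:} I would invoke \eqref{eq: conventional topological recurrence} to upgrade topological recurrence to $\P_x(\tau_G'(Y)<\infty)=1$ for \emph{every} $x\in\XX$ and every non-empty open $G$; since $\Int(A)\neq\varnothing$ and $\tau_A'\le\tau_{\Int(A)}'$, this yields $\tau_A'(Y)<\infty$ a.s.\ from every starting point. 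The forward map $\mu\mapsto\mu_A$ then lands among invariant measures of $Y^A$ by Part~1a of Theorem~\ref{thm: inducing MC}, and stays locally finite because any $x\in A$ has a $\XX$-neighbourhood $U$ with $\mu(U)<\infty$, whence $\mu_A(U\cap A)\le\mu(U)<\infty$. Conversely, given a locally finite invariant $\nu$ of $Y^A$, I form $\bar\nu(B)=\E_\nu[\sum_{k=0}^{\tau_A'(Y)-1}\I(Y_k\in B)]$; since $\tau_A'<\infty$ $\nu$-a.e., the invariance computation behind Lemma~\ref{lem: induced3} applies (it needs only finiteness of $\tau_A'$ $\nu$-a.e., the hypothesis $\nu\ll\mu_A$ there serving only to guarantee this), giving $\bar\nu$ invariant for $Y$ with $\bar\nu|_A=\nu$, while the same conventional recurrence gives $\overline{\mu_A}=\mu$ through the ``moreover'' part of Lemma~\ref{lem: induced3}.

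The remaining and principal obstacle is to show that the lift $\bar\nu$ is locally finite, and this is exactly where the weak Feller property is indispensable. My argument is a minorization. Choose an open $G'\subseteq\Int(A)$ with $\nu(G')<\infty$, which exists since $\bar\nu|_A=\nu$ is locally finite on $\Int(A)$. Fix an arbitrary $z\in\XX$. By topological irreducibility there is $m\ge1$ with $\P_z(Y_m\in G')>0$, and because $P^m$ maps bounded lower semicontinuous functions to lower semicontinuous functions (weak Feller, applied to the open set $G'$), the map $x\mapsto\P_x(Y_m\in G')$ is lower semicontinuous and hence bounded below by some $\delta>0$ on an open neighbourhood $U\ni z$. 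Invariance of $\bar\nu$ then gives $\nu(G')=\bar\nu(G')=\int\P_x(Y_m\in G')\,\bar\nu(dx)\ge\delta\,\bar\nu(U)$, so that $\bar\nu(U)\le\delta^{-1}\nu(G')<\infty$. As $z$ was arbitrary, $\bar\nu$ is locally finite. (The same reasoning shows, more generally, that any non-zero invariant measure finite on a single non-empty open set is automatically locally finite.) With both maps now well defined between the two classes, the identities $\overline{\mu_A}=\mu$ and $\bar\nu|_A=\nu$ complete Part~1.

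For Part~2 I would run the identical scheme with the two-dimensional base $A^c\times A$ replacing $A$, using Part~2a of Theorem~\ref{thm: inducing MC} and \eqref{eq: Kac MC 2} in place of their one-dimensional counterparts, and $T_1^{\to A}$ in place of $\tau_A'$. The hypothesis that some $x\in\Int(A^c)$ has $\P_x(Y_1\in\Int(A))>0$, combined with lower semicontinuity of $x\mapsto\P_x(Y_1\in\Int(A))$ and full support of $\mu$, yields $\P_{\mu|_{A^c}}(Y_1\in A)>0$, so the entrance and exit measures are non-trivial and Part~2a applies; conventional recurrence to both $\Int(A)$ and $\Int(A^c)$ gives $T_1^{\to A}<\infty$ a.e., and local finiteness of the lift follows from the very same minorization. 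Here I would \emph{not} try to deduce Part~2 from Part~1 applied to the coordinate chain $Z=((Y_n,Y_{n+1}))_{n\ge0}$ of \eqref{eq: reduction 2d}, because $Z$ is supported on a thin subset of $\XX\times\XX$ and is there neither topologically recurrent nor topologically irreducible, so the topological hypotheses would fail; arguing directly with the base $A^c\times A$ is cleaner. Finally, the exit-chain statement also follows from the entrance-chain statement applied to the dual chain $\hat Y$ and the set $A^c$, by Remark~\ref{rem: symmetric}.
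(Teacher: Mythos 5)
Part 1 of your proposal is correct and is essentially the paper's own argument: restriction forward, the Kac lift $\bar\nu$ back, invariance of the lift needing only $\P_\nu$-a.s.\ finiteness of $\tau_A'(Y)$ (supplied by \eqref{eq: conventional topological recurrence} since $\tau_A'\le\tau_{\Int(A)}'$), injectivity via the recovery formula $\overline{\mu_A}=\mu$, and your parenthetical minorization is precisely the paper's Lemma~\ref{lem: Radon}. The genuine gap is in Part 2, where you assert that ``local finiteness of the lift follows from the very same minorization.'' It does not run as stated. In Part 1 your minorization was seeded by an open set $G'\subset\Int(A)$ with $\bar\nu(G')=\nu(G')<\infty$, and this identity rests on $\bar\nu|_A=\nu$, which holds because $Y_k\notin A$ for $1\le k<\tau_A'(Y)$. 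For the entrance chain the occupation sum runs up to $T_1^{\to A}-1$, and before its first $A^c\to A$ entrance the chain may revisit $A$ (and any $G'\subset\Int(A)$) arbitrarily many times, so $\mu_1|_A\neq\nu$; the actual relation is $\nu(B)=\int_{A^c}\P_y(Y_1\in B)\,\mu_1(dy)$ for Borel $B\subset A$ (the paper's \eqref{eq: nu from mu_1}), which by itself exhibits no open set of finite $\mu_1$-measure. Since your Lemma~\ref{lem: Radon}-style argument cannot start without such a seed, the missing idea is exactly the step the paper works hardest on: first produce a point $z\in\Int(A)$ such that $\P_x(Y_1\in U)>0$ for \emph{every} open $U$ with $z\in U\subset\Int(A)$ --- proved by contradiction using inner regularity of the finite Borel measure $\P_x(Y_1\in\cdot)$ on the Polish space $\XX$ --- then choose such a $U$ with $\nu(U)<\infty$ and use weak Fellerness together with \eqref{eq: nu from mu_1} to bound $\mu_1(U_x)\le 2\nu(U)/\P_x(Y_1\in U)<\infty$ on a neighbourhood $U_x\subset\Int(A^c)$ of $x$. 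Only after this seed is secured does the minorization (Lemma~\ref{lem: Radon}) give local finiteness of $\mu_1$ everywhere.

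Your duality shortcut for the exit chain is also flawed. Remark~\ref{rem: symmetric} is an identity of laws under \emph{one fixed} invariant measure $\mu$; to transfer the bijection statement you would have to apply the entrance-chain result to $\hat Y$, whose transition kernel is defined only $\Mod{\mu}$ --- so it depends on the very measure being classified, which is circular --- and which need not inherit weak Fellerness, topological irreducibility, or topological recurrence from $Y$. The paper instead handles $\mu\mapsto\mu_{A^c}^{exit}$ directly: given an invariant $\nu^{exit}$ of $Y^{A^c\to}$, it forms the entrance-invariant measure $\nu:=\int_{A^c}\P_y(Y_1\in\cdot\,|\,Y_1\in A)\,\nu^{exit}(dy)$, lifts it to $\mu_1$, and uses the identity $\P_y(Y_1\in A)\,\mu_1(dy)=\nu^{exit}(dy)$ on $A^c$ to obtain the seed bound $\mu_1(U)\le 2\nu^{exit}(U)/\P_x(Y_1\in\Int(A))<\infty$ --- in fact an easier computation than the entrance case. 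So your overall architecture (explicit inverse via the Kac lift, weak Feller used only for local finiteness of the lift) is the right one and coincides with the paper's for Part 1, but Part 2 as written omits the inner-regularity seed argument and replaces the exit-chain proof by a duality step that is not available under the theorem's hypotheses.
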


When this paper was nearly finished, we found a work by Skorokhod with a uniqueness result~\cite[Theorem~3]{Skorokhod} very similar to Part 1 of our Theorem~\ref{thm: inducing bijection} but presented in a more complicated way (for the purpose of proving existence, as explained below) and under the assumption that $\XX$ is {\it locally compact}. In this setting, the results are equivalent\footnote{\label{footn} Let us explain this, for the purpose of exposing a useful ``smoothing'' trick of Skorokhod. Take a continuous non-zero function $g$ on $\XX$ and put $A:=\Cl (\{x \in \XX: g(x)>0\})$. Let $(\alpha_n)_{n \ge 0}$ be a sequence of i.i.d.\ random variables distributed uniformly over $[0,1]$ and independent with $Y$ under $\P_x$ for every $x \in \XX$. Denote $
\tau_g:=\min\{n \ge 1: \alpha_n \le g(Y_n^A)\}$ and define the transition kernel $P_g(x, dy):=\P_x(Y_{\tau_g}^A \in dy)$ for $x, y \in A$, considered in~\cite[Theorem~3]{Skorokhod}. Define the Markov chain $Z:=(Z_n)_{n \ge 0}:=(\alpha_n, Y_n^A)_{n \ge 0}$ on $\XX':=[0,1] \times A$ and put $A':=\{(a,x) \in \XX': a \le g(x)\}$. 

It is easy to see that the chain $Z$ on the Polish space $\XX'$ satisfies the assumptions of Theorem~\ref{thm: inducing bijection}. Then $d a \otimes \mu_A(dx)$ is an invariant locally finite measure for $Z$ iff so is the measure ${(d a \otimes \mu_A(dx))}|_{A'}$ for the induced chain $Z^{A'}$. This yields, by integrating in $a$ over $[0,1]$, that the mapping $\mu_A \mapsto \mu_g$, given by $\mu_g(x):=g(x) \mu_A(dx)$ for $ x \in A$, is a bijection between the sets of locally finite measures invariant under the transition kernels $P_g$ and that  of $Y^A$. Using Part~1 of Theorem~\ref{thm: inducing bijection} once again establishes~\cite[Theorem~3]{Skorokhod}. 

The proof in the other direction is by approximation argument. 
}.

It is remarkable that under the assumptions of Theorem~\ref{thm: inducing bijection}, the chain $Y$ may have two non-proportional invariant measures even if the space $\XX$ is compact! This is shown in the nice example of Carlsson~\cite[Theorem~1]{Carlsson} whose assumptions are satisfied by Lemma~\eqref{lem: conventional topological recurrence}; also see Skorokhod~\cite[Example 1]{Skorokhod}. Furthermore, the condition $\P_x(Y_1 \in \Int(A))>0$ for an $x \in \Int(A^c)$ excludes the case when the chain can enter $A$ from $A^c$ only through $\partial A$. The weak Feller property is required only to get surjectivity of the mappings. 
Note that we cannot use the duality of Section~\ref{sec: duality} to infer the result of  Part 2 on the exit chain from that on the entrance chain since we cannot ensure that the dual chain $\hat Y$ is weak Feller.

The main use of Theorem~\ref{thm: inducing bijection} is when the initial chain $Y$ has a known unique (up to multiplication by positive constant) invariant measure. Our main application, given below in Section~\ref{Sec: Application to RWs}, concerns random walks on $\R^d$ with the invariant Haar measure. 

On the other hand, Theorem~\ref{thm: inducing bijection} can be used to prove existence of an invariant measure for the chain $Y$. We will give a result in this direction (Proposition~\ref{prop: existence}), which easily follows from the next statement. Recall that subsets of $\XX$ are equipped with the subspace topology. 

\begin{lemma} \label{lem: weak Feller}
Under the respective assumptions of Theorem~\ref{thm: inducing bijection}, the induced chain $Y^A$ and the entrance chain $Y^{\to A}$ are weak Feller on $A$ if $\P_x(Y_1 \in \partial A)=0$ for every $x \in \XX$.
\end{lemma}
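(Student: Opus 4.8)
The plan is to verify weak continuity of the transition kernels directly: for any bounded continuous $f$ on $A$, I will show that $x \mapsto \E_x[f(Y_1^A)]$ and $x \mapsto \E_x[f(Y_1^{\to A})]$ are continuous on $A$. Fix a convergent sequence $x_m \to x$ in $A$ and treat the induced chain first. Since $Y_1^A = Y_{\tau_A'(Y)}$, I decompose according to the return time, writing $\{\tau_A'(Y)=n\}=\{Y_1\in A^c,\ldots,Y_{n-1}\in A^c,Y_n\in A\}$; the $n$-th contribution is $\E_x[g_n(Y_1,\ldots,Y_n)]$ with $g_n(y_1,\ldots,y_n)=f(y_n)\I_A(y_n)\prod_{j=1}^{n-1}\I_{A^c}(y_j)$. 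Each $g_n$ is bounded, and its set of discontinuities is contained in $\{(y_1,\ldots,y_n):\ y_j\in\partial A\text{ for some }j\}$.

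The per-term continuity is the first key step. The weak Feller property of $Y$ propagates to finite-dimensional distributions, so $x\mapsto\E_x[h(Y_1,\ldots,Y_n)]$ is continuous for every bounded continuous $h$; hence $\P_{x_m}((Y_1,\ldots,Y_n)\in\cdot)\to\P_x((Y_1,\ldots,Y_n)\in\cdot)$ weakly. The hypothesis $\P_y(Y_1\in\partial A)=0$ for all $y$, together with the Markov property, gives $\P_y(Y_j\in\partial A)=0$ for every $j\ge 1$ and every $y$, so the limit law charges the discontinuity set of $g_n$ with mass zero. By the mapping theorem (the extension of the continuous mapping theorem to almost-everywhere continuous functions), $\E_{x_m}[g_n]\to\E_x[g_n]$ for each fixed $n$.

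It remains to interchange the limit in $m$ with the sum over $n$, which requires a tail bound uniform in $m$. The functions $h_N(y):=\P_y(\tau_A'(Y)>N)=\E_y[\prod_{j=1}^N\I_{A^c}(Y_j)]$ are continuous by the same argument (taking $f\equiv 1$), decrease in $N$, and converge pointwise to $\P_y(\tau_A'(Y)=\infty)$. Since $\Int(A)\neq\varnothing$, topological recurrence and irreducibility force $Y$ to enter $\Int(A)$ from every starting point a.s.\ (the recurrence property underlying Theorem~\ref{thm: inducing bijection}), so $h_N\downarrow 0$ pointwise. Restricting to the compact set $K=\{x_m\}_m\cup\{x\}$, Dini's theorem upgrades this to $\sup_{y\in K}h_N(y)\to 0$. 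Writing $\E_{x_m}[f(Y_1^A)]=\sum_{n=1}^N\E_{x_m}[g_n]+R_N(x_m)$ with $|R_N(x_m)|\le\|f\|_\infty\,h_N(x_m)$ and using the per-term convergence, I obtain $\limsup_m|\E_{x_m}[f(Y_1^A)]-\E_x[f(Y_1^A)]|\le\|f\|_\infty\bigl(\sup_{y\in K}h_N(y)+h_N(x)\bigr)$, which tends to $0$ as $N\to\infty$. This yields the weak Feller property of $Y^A$.

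For the entrance chain the argument is identical once the right decomposition is used. On $\{Y_0\in A\}$, relation~\eqref{eq: T <-> tau} gives $T_1^{\to A}=\tau_{C_{A^c\times A}}(Y)+1$, hence $Y_1^{\to A}=Y_{\tau_{C_{A^c\times A}}(Y)+1}$, and I decompose over $\{\tau_{C_{A^c\times A}}(Y)=n\}$ with integrand $G_n=f(y_{n+1})\I_A(y_{n+1})\I_{A^c}(y_n)\prod_{j=1}^{n-1}\I\bigl((y_j,y_{j+1})\notin A^c\times A\bigr)$, whose discontinuities again lie in $\{y_i\in\partial A\text{ for some }i\}$ and are therefore null. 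The per-term continuity and the Dini/uniform-tail steps go through verbatim, the only new input being that $\tau_{C_{A^c\times A}}(Y)<\infty$ a.s.\ from every starting point. I expect this last recurrence fact to be the main obstacle: starting from the assumption $\P_{x_0}(Y_1\in\Int(A))>0$ for some $x_0\in\Int(A^c)$, weak continuity produces an open $U\subset\Int(A^c)$ and a $\delta>0$ with $\P_y(Y_1\in\Int(A))>\delta$ on $U$; topological recurrence makes $Y$ visit $U$ infinitely often, and a conditional Borel--Cantelli argument along these visit times forces infinitely many $A^c\to A$ transitions, whence $\tau_{C_{A^c\times A}}(Y)<\infty$ a.s. Establishing this transition-recurrence cleanly, and confirming that the null-boundary hypothesis indeed kills every discontinuity appearing in $G_n$, are the points that need the most care.
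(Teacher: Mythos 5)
Your proof is correct, and although it follows the same overall skeleton as the paper's (expand the one-step kernel of the sampled chain as a series over the relevant hitting time, prove per-term continuity, then control the tail), both key steps are implemented by genuinely different means. For per-term continuity the paper inducts on the time index via the Chapman--Kolmogorov equation, using at each stage the weak Feller property of $Y$ together with $\P_x(Y_1\in\cdot)$-a.s.\ continuity of the integrand; you instead invoke weak convergence of the finite-dimensional laws of $(Y_1,\ldots,Y_n)$ plus the a.e.-continuous mapping theorem, after checking that the discontinuity sets of $g_n$ and $G_n$ lie in boundary slices $\{y_j\in\partial A\}$ (note $\partial(A^c\times A)\subset(\partial A\times\XX)\cup(\XX\times\partial A)$), which are null because $\P_y(Y_j\in\partial A)=0$ propagates to all $j\ge1$ by the Markov property. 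Both mechanisms work; yours front-loads a standard but nontrivial fact (weak continuity of finite-dimensional laws for Feller chains, itself proved by the same Chapman--Kolmogorov induction), while the paper's is more self-contained. Your tail step is in fact more careful than the paper's: the paper dismisses the interchange with ``dominated convergence and $\P_y(\tau_A'(Y)<\infty)=1$,'' which as stated does not by itself yield continuity of an infinite series of continuous functions, whereas your Dini argument on the compact set $\{x_m\}_m\cup\{x\}$ --- using continuity of $h_N(y)=\P_y(\tau_A'(Y)>N)$ (the per-term step with $f\equiv1$) and $h_N\downarrow0$ via the recurrence property \eqref{eq: conventional topological recurrence} --- supplies exactly the locally uniform tail control that is needed. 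For the entrance chain the decompositions also differ: the paper writes a double series over the exit time from $A$ and the sojourn length in $A^c$ and inducts on $n$ with the induced-chain continuity as base case, while you sum once over the hitting time of the cylinder $C_{A^c\times A}$ via \eqref{eq: T <-> tau}, which is arguably cleaner and does not reuse the induced-chain result. Finally, the recurrence fact you flag as the main obstacle is fine but obtainable more cheaply: by \eqref{eq: conventional topological recurrence} the chain a.s.\ visits $\Int(A^c)$ and afterwards $\Int(A)$ from every starting point, and the first entry into $A$ after a visit to $A^c$ is automatically an $A^c\to A$ transition, so $\tau_{C_{A^c\times A}}(Y)<\infty$ a.s.\ without your $\delta$-neighbourhood/Borel--Cantelli machinery (your argument is nonetheless valid; just note that constructing $U$ uses weak Fellerness in addition to recurrence and irreducibility).
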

\begin{cor*}
For any topologically recurrent random walk $S$ on $\R$ with continuous distribution of increments, the chain $O$ of overshoots at up-crossings of the zero level is weak Feller.
\end{cor*}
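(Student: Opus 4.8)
The plan is to recognise $O$ as the entrance chain $S^{\to[0,\infty)}$ of $S$ into $A:=[0,\infty)$ (as recorded in Section~\ref{sec:Intro}) and to apply Lemma~\ref{lem: weak Feller} with $Y=S$ and this choice of $A$; all the work lies in verifying the hypotheses. First I would pin down the state space $\ZZ$: since the distribution of $X_1$ is non-atomic it assigns zero mass to every countable set and so cannot be supported on a discrete subgroup $h\Z$ of $\R$; as every closed subgroup of $\R$ is $\{0\}$, some $h\Z$, or $\R$, this forces $\ZZ=\R$. Hence $\XX=\ZZ=\R$ is Polish, $A=[0,\infty)$ has $\Int(A)=(0,\infty)\neq\varnothing$, and, crucially, $\partial A=\{0\}$ is a single point.

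Next I would check the assumptions of Theorem~\ref{thm: inducing bijection}, Part~2, which are the ``respective assumptions'' invoked by Lemma~\ref{lem: weak Feller}. Topological recurrence of $S$ is assumed, and topological irreducibility then follows automatically for a topologically recurrent random walk by Guivarc'h et al.~\cite[Theorem~24]{French} (cf.\ Section~\ref{sec: setup MCs}). The weak Feller property of $S$ itself is routine, since for bounded continuous $f$ the map $x\mapsto\E f(x+X_1)$ is continuous by dominated convergence. Finally, the entrance condition requires an $x\in\Int(A^c)=(-\infty,0)$ with $\P_x(S_1\in\Int(A))>0$: as $S$ is recurrent and non-degenerate it cannot be monotone, so $\P(X_1>0)>0$, whence $\P_x(S_1\in(0,\infty))=\P(X_1>-x)>0$ for every $x<0$ of sufficiently small modulus.

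It then remains to verify the extra hypothesis of Lemma~\ref{lem: weak Feller}, namely $\P_x(S_1\in\partial A)=0$ for every $x\in\R$. Because $\partial A=\{0\}$, this reads $\P_x(S_1=0)=\P(X_1=-x)=0$, which holds for every $x$ precisely because the distribution of $X_1$ has no atoms. With all hypotheses in place, Lemma~\ref{lem: weak Feller} yields that $O=S^{\to[0,\infty)}$ is weak Feller. I expect no serious obstacle; the one point demanding care is the reduction $\ZZ=\R$, which collapses $\partial A$ to the single point $\{0\}$ and thereby lets the no-atom assumption translate directly into the boundary condition $\P_x(S_1\in\partial A)=0$ required by Lemma~\ref{lem: weak Feller}.
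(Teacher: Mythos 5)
Your proposal is correct and follows exactly the paper's route: the paper derives the corollary ``immediately'' from Lemma~\ref{lem: weak Feller} applied to $Y=S$ and $A=[0,\infty)$, and you have simply spelled out the routine verifications (that continuity of the increment law forces $\ZZ=\R$ and kills atoms so that $\P_x(S_1\in\partial A)=\P(X_1=-x)=0$, that $S$ is weak Feller and topologically irreducible via~\cite[Theorem~24]{French}, and that oscillation gives the entrance condition) which the paper leaves implicit. No gaps; this matches the intended argument.
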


We postpone the proof of the lemma for a moment. The corollary follows immediately using that $S$ is weak Feller by $\P_x(S_1 \in \cdot) = \P(x + X_1  \in \cdot)$ for $x \in \XX$.

\begin{remark} \label{rem: not Feller}
In general, the chains $Y^A$ and $Y^{\to A}$ may not be weak Feller. For example, consider a recurrent random walk $S$ on $\R$ whose distribution of increments is continuous except for an atom at $-1$. Take $A=[0, \infty)$. Then $\P_1(S_1^A = 0) = \P_1(S_1 =0)>0$ but for any $0 \le x < 1$, the distribution of $S_1^A$ is continuous and thus $\P_x(S_1^A = 0) = 0$. Similar examples can be constructed for the chain $O=S^{\to A}$ of overshoots.
\end{remark}


We get the following existence result combining Theorem~\ref{thm: inducing bijection} with Lemma~\ref{lem: weak Feller} and the classical Bogolubov--Krylov theorem.

\begin{proposition} \label{prop: existence}
Let $Y$ be a topologically recurrent topologically irreducible weak Feller Markov chain that takes values in a locally compact Polish space $\XX$. Assume that there exists a compact set $A \subset \XX$ such that $\Int(A) \neq \varnothing$ and $\P_x(Y_1 \in \partial A)=0$ for every $x \in \XX$. Then the chain $Y$ has a locally finite non-trivial invariant measure.
\end{proposition}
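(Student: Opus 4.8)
The plan is to reduce the existence of a locally finite invariant measure for $Y$ on the non-compact space $\XX$ to the existence of an \emph{invariant probability} measure for the induced chain $Y^A$, and then to lift the latter through the bijection of Theorem~\ref{thm: inducing bijection}. The reason for passing to $Y^A$ is precisely the difficulty flagged before the statement: the empirical distributions of $Y$ itself need not be tight, so the Bogolubov--Krylov theorem does not apply to $Y$ directly. By contrast, the induced chain lives on the relatively compact set $A$, and it is there that compactness can be exploited.

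First I would set up the induced chain. Since $A$ is open and $Y$ is topologically recurrent, $\P_x(\tau_A'(Y)<\infty)=1$ for every $x\in A$, so $A\subset N_A$ and, as noted in Section~\ref{sec: setup MCs}, $Y^A$ is a well-defined Markov chain on all of $A$ with every Borel measure on $A$ proper. Next, because $\P_x(Y_1\in\partial A)=0$ for every $x\in\XX$, Lemma~\ref{lem: weak Feller} applies (in the form valid without topological irreducibility for open $A$, see Remark~\ref{rem: uniqueness}) and shows that $Y^A$ is weak Feller on $A$.

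The heart of the argument --- and the step I expect to be the main obstacle --- is to produce an invariant probability measure $\nu$ for $Y^A$. Here I would realise $Y^A$ as a Markov chain on the \emph{compact} closure $\bar A$ by extending its transition kernel through the original chain, $Q(x,\cdot):=\P_x(Y_{\tau_A'(Y)}\in\cdot)$ for $x\in\bar A$. Since the first entrance into the open set $A$ never lands on $\partial A$, this kernel charges no boundary mass: $Q(x,\partial A)=0$ for every $x\in\bar A$. The delicate technical points are (i) that $Q$ is again weak Feller on $\bar A$, which should follow by upgrading the proof of Lemma~\ref{lem: weak Feller} to the boundary using the weak Feller property of $Y$ together with $\P_x(Y_1\in\partial A)=0$, and (ii) that $\tau_A'(Y)$ is $\P_x$-a.s.\ finite at the boundary points carrying mass. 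Granting these, the classical Bogolubov--Krylov theorem on the compact space $\bar A$ yields a $Q$-invariant probability $\nu$, and invariance combined with $Q(x,\partial A)=0$ forces $\nu(\partial A)=\int_{\bar A}Q(x,\partial A)\,\nu(dx)=0$. Thus $\nu$ is concentrated on $A$ and is a genuine invariant probability measure of $Y^A$. Equivalently, one may argue with the Ces\`aro averages of the laws of $Y^A$ started at a point of $A$: these are supported in $\bar A$ and hence tight, and the only issue is again to rule out escape of mass to $\partial A$ in a weak limit, which is exactly what $\P_x(Y_1\in\partial A)=0$ is designed to prevent.

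Finally I would lift $\nu$. The surjectivity half of the bijection in Theorem~\ref{thm: inducing bijection} --- which, as remarked after its statement and made precise in the proof of Remark~\ref{rem: uniqueness}, uses only the weak Feller property and topological recurrence of $Y$ and \emph{not} its irreducibility --- produces a locally finite invariant Borel measure $\mu$ of $Y$ on $\XX$ with $\mu_A=\nu$. Since $\nu$ is a probability measure, $\mu_A=\nu\neq 0$, whence $\mu$ is non-trivial, as required. Note that only surjectivity (lifting) is needed here; injectivity, which is where irreducibility or uniqueness would enter, plays no role in the existence statement.
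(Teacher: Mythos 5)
Your skeleton is exactly the paper's: the paper proves this proposition in a single sentence, by combining Lemma~\ref{lem: weak Feller} (weak Fellerness of $Y^A$, valid for open $A$ without topological irreducibility by Remark~\ref{rem: uniqueness}), the Bogolubov--Krylov theorem applied to the chain induced on the relatively compact open set $A$, and the surjectivity (lifting) half of Part 1 of Theorem~\ref{thm: inducing bijection}, again in the irreducibility-free form of Remark~\ref{rem: uniqueness}. Your accounting of which hypothesis enters where is also accurate: $\P_x(Y_1\in\partial A)=0$ is used for weak Fellerness of $Y^A$; openness of $A$ plus topological recurrence makes $Y^A$ well defined with every measure proper; only surjectivity of $\mu\mapsto\mu_A$ is needed, and for $\nu$ finite on open $A$ the lifting and the local-finiteness argument of Remark~\ref{rem: uniqueness} go through without irreducibility.

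The soft spot is your compactification step, which the paper leaves implicit, and here your two ``granted'' points do not follow from the stated hypotheses. For (ii): the paper's definition of topological recurrence gives $\P_x(\tau_G'(Y)<\infty)=1$ only for $x\in G$ with $G$ open, so it says nothing about starting points on $\partial A$; the upgrade \eqref{eq: conventional topological recurrence} to all $x\in\XX$ is proved using topological irreducibility, which is precisely what this proposition drops. For (i): the function $x\mapsto\P_x(\tau_A'(Y)<\infty)=\sum_{k\ge1}\P_x(Y_1,\ldots,Y_{k-1}\in A^c,\,Y_k\in A)$ is a sum of nonnegative functions continuous on all of $\XX$ (this is where $\P_x(Y_1\in\partial A)=0$ for every $x$ enters, via the induction in the proof of Lemma~\ref{lem: weak Feller}), hence only lower semicontinuous; it equals $1$ on $A$ but may jump down on $\partial A$, so your extended kernel $Q$ can fail to be Feller exactly at the boundary, and any completion of a defective boundary kernel to a Markov kernel (say by adding $\P_x(\tau_A'=\infty)\,\delta_x$) destroys the identity $Q(x,\partial A)=0$ on which your elimination of boundary mass rests. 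Your Ces\`aro fallback is closer to what can be made rigorous, but the parenthetical claim that $\P_x(Y_1\in\partial A)=0$ ``is designed to prevent'' escape of mass misattributes its role (in the paper it serves Lemma~\ref{lem: weak Feller}). What lower semicontinuity actually buys, via the portmanteau theorem applied to the extension of $Qf$ for $f\in C_c(A)$, $f\ge0$, is the sub-invariance $(\nu|_A)Q\le\nu|_A$ for any weak limit $\nu$ on $\bar A$; since $Q(x,A)=1$ for every $x\in A$ by topological recurrence, the two sides have equal finite total mass and sub-invariance upgrades to invariance of $\nu|_A$ for $Y^A$. The one genuinely open step in your sketch is then to show $\nu(A)>0$, i.e.\ that not all Ces\`aro mass accumulates on $\partial A$: neither of your two routes establishes this, and it is the point where an additional argument (or the paper's choice of an ``appropriate'' relatively compact open set) is still required.
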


This in turn yields, by Skorokhod's ``smoothing'' trick explained in Footnote~\ref{footn}, a stronger existence result (which is not explicit in~\cite{Skorokhod} but it was surely evident to Skorokhod, cf.~\cite[Lemma~4]{Skorokhod}). It was also obtained by Lin~\cite[Theorem~5.1]{Lin} using functional-analytic approach.

\begin{proposition}[\bf{Lin~\cite{Lin}; Skorokhod~\cite{Skorokhod}}] \label{prop: Skorokhod}
Every topologically recurrent topologically irreducible weak Feller Markov chain that takes values in a locally compact Polish space has a locally finite non-zero invariant measure.
\end{proposition}

Finally, we mention a more recent paper by Szarek~\cite{Szarek} on existence of invariant probability measures for weak Feller chains on non-locally compact spaces.

\medskip

Before proceeding to the proof of Theorem~\ref{thm: inducing bijection}, we give two simple auxiliary results.

\begin{lemma} \label{lem: Radon}
Let $Y$ be a topologically irreducible weak Feller Markov chain taking values in a metric space $\XX$. An invariant Borel measure $\mu$ of $Y$ is locally finite if and only if it is finite on some non-empty open set.
\end{lemma}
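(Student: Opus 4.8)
The plan is to prove the non-trivial implication only, since if $\mu$ is locally finite then by definition every point of $\XX$ has an open neighbourhood of finite measure, and in particular $\mu$ is finite on some non-empty open set. So I assume $\mu$ is finite on a non-empty open set $G$, fix an \emph{arbitrary} point $x_0 \in \XX$, and look for an open neighbourhood of $x_0$ of finite $\mu$-measure; since $x_0$ is arbitrary, this yields local finiteness.

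First I would invoke topological irreducibility to get $\P_{x_0}(\tau_G'(Y) < \infty) > 0$. Because $\{\tau_G'(Y) < \infty\} = \bigcup_{n \ge 1}\{Y_n \in G\}$, there is a fixed $n \ge 1$ with $\P_{x_0}(Y_n \in G) =: p > 0$. The crucial idea is then to propagate this positivity from the single point $x_0$ to a whole neighbourhood. For this I would observe that the $n$-step kernel inherits the weak Feller property: by induction the kernel maps bounded continuous functions to bounded continuous functions (boundedness is preserved because these are averages, and continuity comes from the one-step weak Feller assumption applied to the bounded continuous function $\E_\cdot[f(Y_{n-1})]$). Hence $x \mapsto \P_x(Y_n \in \cdot)$ is continuous into the space of probability measures equipped with the topology of weak convergence. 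Composing this with the portmanteau inequality for the open set $G$ shows that $x \mapsto \P_x(Y_n \in G)$ is lower semicontinuous, so that $U := \{x \in \XX : \P_x(Y_n \in G) > p/2\}$ is open; moreover $x_0 \in U$ since $\P_{x_0}(Y_n \in G) = p > p/2$. (Lower semicontinuity also guarantees Borel measurability of this function, so the integral below is well defined.)

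Finally I would exploit invariance: iterating $\mu = \P_\mu(Y_1 \in \cdot)$ gives $\mu = \P_\mu(Y_n \in \cdot)$, whence
\[
\mu(G) = \int_{\XX} \P_x(Y_n \in G)\,\mu(dx) \ge \int_U \P_x(Y_n \in G)\,\mu(dx) \ge \frac{p}{2}\,\mu(U),
\]
so that $\mu(U) \le 2\mu(G)/p < \infty$. This exhibits a finite-measure open neighbourhood of $x_0$, and as $x_0$ was arbitrary the measure $\mu$ is locally finite.

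I expect the main obstacle to be the middle step, namely upgrading ``$x_0$ reaches $G$ with positive probability'' to ``every point of a neighbourhood of $x_0$ reaches $G$ at the \emph{same fixed} time $n$ with probability at least $p/2$''. Two ingredients make this work: that the weak Feller property is preserved under iteration of the transition kernel, which lets one freeze the time $n$; and the lower semicontinuity of $x \mapsto \P_x(Y_n \in G)$ for open $G$, supplied by the portmanteau theorem, which is exactly what makes the superlevel set $U$ open. By comparison the passage from irreducibility to positivity at the single point $x_0$, and the use of invariance to turn $\mu(G)<\infty$ into a bound on $\mu(U)$, are routine.
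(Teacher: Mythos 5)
Your proof is correct and follows essentially the same route as the paper: topological irreducibility yields a fixed $n$ with $\P_{x_0}(Y_n \in G) > 0$, the Chapman--Kolmogorov equation propagates the weak Feller property to the $n$-step kernel so that $x \mapsto \P_x(Y_n \in G)$ is lower semicontinuous on the open set $G$, and invariance of $\mu$ then bounds $\mu(U)$ by $2\mu(G)/p < \infty$ on the resulting open neighbourhood $U$ of $x_0$. The paper's proof is a more compressed version of exactly this argument (it chooses $U_x$ with $\P_y(Y_n \in G) \ge \frac12 \P_x(Y_n \in G)$ rather than naming the superlevel set), so there is nothing to add.
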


\begin{proof}
The necessary condition is trivial. To prove the sufficient one, assume that $G$ is a non-empty open subset of $\XX$ satisfying $ \mu(G)<\infty$. By topological irreducibility of $Y$, for any $x \in \XX$ there exists an $n=n(x) \ge 1$ such that $\P_x(Y_n \in G)>0$. It follows by a simple inductive argument that the $n$-step transition kernel $\P_x(Y_n \in \cdot)$ is weakly continuous in~$x$. Indeed, for any continuous bounded function $f: \XX \to \R$, we have 
\[
\E_x f(Y_n) = \int_\XX \E_y f(Y_{n-1}) \P_x( Y_1 \in dy), \qquad x \in \XX
\]
by the Chapman--Kolmogorov equation. The integrand is a continuous bounded function by assumption of induction, and so is the integral since $Y$ is weak Feller. 

Then there is an open neighbourhood $U_x$ of $x$ such that $\P_y(Y_n \in G) \ge \frac12 \P_x(Y_n \in G)$ for every $y \in U_x$. By invariance of $\mu$, this gives 
\begin{equation} \label{eq: finiteness}
\infty > \mu(G) = \int_\XX \P_y(Y_n \in G) \mu(dy) \ge \int_{U_x} \P_y(Y_n \in G) \mu(dy) \ge \frac12 \P_x(Y_n \in G) \mu(U_x),
\end{equation}
implying finiteness of $\mu(U_x)$, as required.
\end{proof}

\begin{lemma} \label{lem: conventional topological recurrence}
Let $Y$ be a topologically irreducible topologically recurrent weak Feller Markov chain that takes values in a metric space $\XX$. Then
\begin{equation} \label{eq: conventional topological recurrence}
\P_x(\tau_G'(Y) <\infty)=1 \text{ for every } x \in \XX \text{ and non-empty open }G \subset \XX.
\end{equation}
\end{lemma}
\begin{proof}
As in the proof of Lemma~\ref{lem: Radon}, by topological irreducibility and weak Feller property of $Y$ we can find an open neighbourhood $U$ of $x$ such that $\inf_{y \in U} \P_y(\tau_G'(Y) <\infty)>0$. The claim now follows by topological recurrence and the strong Markov property of the chain $Y$, which returns to $U$ $\P_x$-a.s.
\end{proof}

\begin{proof}[{\bf Proof of Theorem~\ref{thm: inducing bijection}}] 
First of all, the sets of invariant measures of the chains $Y^A$, $Y^{\to A}$, and $Y^{A^c \to}$ are non-empty since they contain the zero measure (an even non-zero ones, by Proposition~\ref{prop: Skorokhod}).

Let $\mu$ be a non-zero locally finite Borel invariant measure of the Markov chain $Y$. Then the measure $\mu_A$ is Borel and locally finite on $A$, and so are the measures $\mu_A^{entr}$ on $A$ and $\mu_{A^c}^{exit}$ on $A^c$ as follows from the inequalities $\mu_A^{entr} \le \mu_A$ and $\mu_{A^c}^{exit} \le \mu_{A^c}$. Further, $\mu$ is $\sigma$-finite as a locally finite measure on a Polish space. By choosing an open set $G$ in~\eqref{eq: conventional topological recurrence} of finite measure, we conclude that $Y$ is recurrent starting under $\mu$ by Condition~\ref{cond: recur finite} in Section~\ref{sec: setup MCs}. Then Theorem~\ref{thm: inducing MC} applies, and the measure $\mu_A$, $\mu_A^{entr}$, and  $\mu_{A^c}^{exit}$ are invariant for the respective chains $Y^A$, $Y^{\to A}$, and $Y^{A^c \to}$.

1) Let us prove surjectivity of the mapping $\mu \mapsto \mu_A$. Let $\nu$ be a locally finite non-zero Borel invariant measure of the induced chain $Y^A$ on $A$. As in \eqref{eq: Kac MC 1} applied to one-dimensional cylindrical sets $E=C_B$, we can ``lift'' $\nu$ from $A$ to a measure on~$\XX$:
\[
\bar \nu(B):= \E_\nu \!\left [ \sum_{k=0}^{\tau_A'(Y) - 1} \I(Y_k \in B ) \right ] = \int_A \E_y \!\left [ \sum_{k=0}^{\tau_A'(Y) - 1} \I(Y_k \in B ) \right ] \nu(dy), \qquad B \in \mathcal{B}(\XX).
\]
Note the difference with equality \eqref{eq: Kac} in Lemma~\ref{lem: induced3}, where we considered measures on the trajectory space $\XX^{\N_0}$ instead of measures on $\XX$ as here. The crucial observation is that $\tau_A'(Y)$ is finite $\P_\nu$-a.e., although we do not require that $\nu \ll \mu_A$ as in Lemma~\ref{lem: induced3}. In fact, we have $\P_y(\tau_A'(Y) \le \tau_{\Int(A)}'(Y)< \infty)=1$ for {\it every} $y \in \XX$. 

Then by the same argument as in \eqref{eq: Kac proof 1}, from the equality $\P_\nu(\tau_A'(Y) =\infty)=0$ we obtain
\[
\bar \nu(B) =  \sum_{k=0}^\infty \P_{\nu} (Y_k \in B, \tau_A'(Y)>k), \qquad B \in \mathcal B (\XX),
\]
and then
\begin{align} \label{eq: lifted invariant}
&\mathrel{\phantom{=}} \P_{\bar \nu}(Y_1 \in B) = \int_\XX \P_y(Y_1 \in B ) \bar \nu(dy) = \sum_{k=0}^\infty  \int_\XX \P_y(Y_1 \in B ) \P_\nu(Y_k \in dy, \tau_A'(Y)>k) \notag \\
&=\sum_{k=0}^\infty \P_\nu(Y_{k+1} \in B, \tau_A'(Y) \ge k+1) = \E_\nu \! \left [ \sum_{k=1}^{\tau_A'(Y)} \I(Y_k \in B) \right ] =\bar \nu (B),
\end{align}
where the second to the last inequality is again analogous to \eqref{eq: Kac proof 1} and in the last equality we used the relation $Y_{\tau_A'(Y)} = Y_1^A$ and the assumed invariance of $\nu$ for the induced chain $Y^A$.

Thus, $\bar \nu$ is an invariant measure for the chain $Y$ and it clearly satisfies $\bar \nu|_A = \nu$. Since $\nu$ is locally finite in the subspace topology, we can find a set $G \subset \Int(A)$ open in this topology such that $0<\nu(G)<\infty$. By Lemma~\ref{lem: Radon} this implies that $\bar \nu$ is locally finite since $G$ is also open in the topology of $\XX$. Thus, the mapping $\mu \mapsto \mu_A$ is surjective. 

2) We first consider the mapping $\mu \mapsto \mu_A^{entr}$. To prove its surjectivity, let $\nu$ be a locally finite non-zero Borel invariant measure of the entrance chain $Y^{\to A}$ on $A$. The Borel measure
\begin{equation} \label{eq: nu lifted}
\mu_1(B):= \E_\nu \! \left [ \sum_{k=0}^{T_1^{\to A} - 1} \I(Y_k \in B ) \right ], \qquad B \in \mathcal{B}(\XX),
\end{equation}
is invariant for $Y$, which can be checked exactly as in the proof of Part 1 above. In fact, we have $Y_1^{\to A}= Y_{T_1^{\to A}}$ by definition of the entrance chain, where $\P_\nu$-a.s.\ finiteness of $T_1^{\to A}$ follows from the strong Markov property of $Y$ combined with the equalities
\[
\P_y(\tau_A'(Y) \le \tau_{\Int(A)}'(Y)< \infty) = \P_y( \tau_{A^c}'(Y) \le \tau_{\Int(A^c)}'(Y)< \infty)=1, \qquad y \in \XX.
\]

Further, for any Borel set $B \subset A$, we have
\begin{align} \label{eq: nu from mu_1}
\int_{A^c} \P_y(Y_1 \in B ) \mu_1(dy) &= \sum_{k=0}^\infty  \int_{A^c} \P_y(Y_1 \in B ) \P_\nu(Y_k \in dy, T_1^{\to A} >k) \notag \\
&=\sum_{k=0}^\infty \P_\nu(Y_{k+1} \in B, T_1^{\to A} = k+1) \notag  \\
&= \P_\nu (Y_1^{\to A} \in B) = \nu(B).
\end{align}

By the assumption we have $\P_x(Y_1 \in \Int(A))>0$ for some $x \in \Int(A^c)$, and it follows that there exists a $z \in \Int(A)$ such that for any open set $U$ satisfying $z \in U \subset \Int(A)$ we have $\P_x(Y_1 \in U)>0$.  In fact, if this was not true, then every $z \in \Int(A)$ would admit an open neighbourhood $U_z \subset \Int(A)$ such that $\P_x(Y_1 \in U_z) = 0$. Hence the Borel measure $\P_x(Y_1 \in \cdot \cap \Int(A))$ is zero on compact sets, and by inner regularity of finite Borel measures on Polish spaces (Bogachev~\cite[Theorem~7.1.7]{Bogachev}), we arrive at $\P_x(Y_1 \in \Int(A)) =0$, which is a contradiction. 

By local finiteness of $\nu$, choose an open set $U$ satisfying $z \in U \subset \Int(A)$ such that $\nu(U)<\infty$. It holds $\P_x(Y_1 \in U)>0$ and by the weak Feller property of $Y$, we can find an open set $U_x$ such that $x \in U_x \subset \Int(A^c)$ and $\P_y(Y_1 \in U) \ge \frac12 \P_x(Y_1 \in U)$ for every $y \in U_x$. By \eqref{eq: nu from mu_1} and exactly the same argument as in \eqref{eq: finiteness}, this gives $\mu_1(U_x) < \infty$. Hence the measure $\mu_1$ on $\XX$ is locally finite by Lemma~\ref{lem: Radon}, and so the mapping $\mu \mapsto \mu_A^{entr}$ is surjective. Its injectivity follows immediately from Part 2a of Proposition~\ref{prop: Kac MC}.

Now consider the mapping $\mu \mapsto \mu_{A^c}^{exit}$. To prove its surjectivity, let $\nu^{exit}$ be a locally finite non-zero Borel invariant measure of the exit chain $Y^{A^c \to}$ on $A^c$. Then the Borel measure $\nu:=\int_{A^c} \P_y(Y_1 \in \cdot | Y_1 \in A) \nu^{exit}(dy)$ on $A$ is invariant for the entrance chain $Y^{\to A}$ from $A^c$ to $A$, and the measure $\mu_1$ introduced in \eqref{eq: nu lifted} is invariant for the chain $Y$. Moreover, we have the following equality of Borel measures on~$A^c$: 
\begin{align} \label{eq: nu_exit}
\P_y(Y_1 \in A) \mu_1(dy) &= \sum_{k=0}^\infty \P_y(Y_1 \in A ) \P_\nu(Y_k \in dy, T_1^{\to A} >k) \notag \\
&= \P_\nu(Y_{ T_1^{\to A} - 1} \in dy) = \P_\nu(Y_1^{A^c \to})=\nu^{exit}(dy), \qquad y \in A^c.
\end{align}

Then, if $x \in A^c$ is such that $\P_x(Y_1 \in \Int(A))>0$, by weak Fellerness of $Y$ and local finiteness of $\nu^{exit}$ we can choose an open set $U$ such that $x \in U \subset \Int(A^c)$, $\nu^{exit}(U)$ is finite, and $\P_y(Y_1 \in \Int(A)) \ge \frac12 \P_x(Y_1 \in \Int(A)) $ for every $y \in U$. By \eqref{eq: nu_exit}, this gives 
\[
\mu_1(U) = \int_U \frac{\nu^{exit}(dy)}{\P_y(Y_1 \in A) } \le \int_U \frac{\nu^{exit}(dy)}{\P_y(Y_1 \in \Int(A)) } \le \frac{2 \nu^{exit}(U)}{\P_x(Y_1 \in \Int(A)) } < \infty,
\]
hence the measure $\mu_1$ on $\XX$ is locally finite by Lemma~\ref{lem: Radon}. So the mapping $\mu \mapsto \mu_{A^c}^{exit}$ is surjective. Also, by the equality $\nu = \int_{A^c} \P_y(Y_1 \in \cdot) \mu_1(dy)$ of measures on $A$, $\nu$ is locally finite since $\mu_1$ is so, as we proved earlier. By the established injectivity of the mapping $\mu \mapsto \mu_A^{entr}$, this implies injectivity of the mapping $\mu \mapsto \mu_{A^c}^{exit}$. 
\end{proof}



\begin{proof}[{\bf Proof of Lemma~\ref{lem: weak Feller}}]
Consider the induced chain $Y^A$. Note that if the set $A$ is open, then this chain is well defined even without topological irreducibility of $Y$ since $Y$ returns to $A$ infinitely often by topological recurrence. We need to show that the mapping $x \mapsto \E_x f(Y^A_1)$ is continuous on $A$ for every continuous bounded function $f$ on $A$. Define the extension $\bar f$ of $f$ on $\XX$ by putting $\bar f:=0$ on $A^c$. Then for every $x \in A$,
\[
\E_x f(Y^A_1) = \sum_{k=1}^\infty \E_x \Bigl [ \bar f(Y_k) \I(Y_1, \ldots, Y_{k-1} \in A^c ) \Bigr ],
\]
and by the dominated convergence theorem and the fact that $\P_y(\tau_A'(Y)<\infty)=1$ for $y \in A$, it suffices to prove that every term is continuous on $\XX$. We use a simple inductive argument. It follows from the weak Feller property of $Y$ that the first term $\E_x \bar f(Y_1)$ is continuous at every $x \in \XX$ since the bounded function $\bar f$ is continuous on $(\partial A)^c$ and therefore continuous $\P_x(Y_1 \in \cdot)$-a.s. For every $k \ge 1$, by the Chapman--Kolmogorov equation we have
\[
\E_x \Bigl [ \bar f(Y_{k+1}) \I(Y_1, \ldots, Y_k \in A^c ) \Bigr ] = \int_\XX \I_{A^c}(y) \cdot  \E_y \Bigl [ \bar f(Y_k) \I(Y_1, \ldots, Y_{k-1} \in A^c ) \Bigr ] \P_x(Y_1 \in dy).
\]
Now we see that the above expression is a continuous function of $x$. This follows from the weak Feller property of $Y$ and $\P_x(Y_1 \in \cdot)$-a.s.\ continuity (in $y$) of the integrand, whose second factor is continuous by assumption of induction.

Similarly, the weak Feller property of the entrance chain $Y^{\to A}$ follows from the identity
\[
\E_x f(Y^{\to A}_1) = \sum_{n=1}^\infty \sum_{k=1}^\infty \E_x \Bigl [ \bar f(Y_{n+k}) \I(Y_1, \ldots, Y_{n-1} \in A, Y_n, \ldots, Y_{n+k-1} \in A^c ) \Bigr ]
\]
by induction on $n$ using the continuity $\E_x f(Y^A_1)$ obtained above for the basis step $n=1$. 
\end{proof}

\section{Applications to random walks in $\R^d$} \label{Sec: Application to RWs}

In this section we apply the ideas developed in Section~\ref{sec: stationarity MC} to random walks in arbitrary dimension. In particular, we answer our initial questions on stationarity properties of the chain of overshoots of a one-dimensional random walk over the zero level. Although the results of this section follow easily from those of Section~\ref{sec: stationarity MC}, we state them as separate theorems.

Recall that the state space $\ZZ$ of the random walk $S$ in $\R^d$, where $d\ge 1$, was defined in the Introduction as the minimal closed subgroup of $(\R^d, +)$ containing the topological support of the distribution of $X_1$. Let us normalize the Haar measure $\lambda$ on $\ZZ$ such that $\lambda([0,x))=\lambda'([0,x))$ for any $x \ge 0$ in $\ZZ$, where $\lambda'$ is the Lebesgue measure on the linear hull $\lin (\ZZ)$ of $\ZZ$  and $[0,x):=\{y \in \R^d: 0 \le y < x\}$; we always mean that inequalities between points in $\R^d$ hold coordinate-wise. In this section we assume w.l.o.g.\ that $\lin (\ZZ)$ has full dimension. We say that $S$ is {\it lattice} if $\ZZ = \Z^d$ and {\it non-lattice} is otherwise.

Clearly, $\lambda$ is invariant for the walk $S$ on $\XX=\ZZ$. We can say more.
\begin{lemma} \label{lem: RW ergodic}
Any topologically recurrent random walk $S$ on its state space $\ZZ$, where $\ZZ \subset \R^d$ and $d\in \{1,2\}$, is recurrent and ergodic starting under $\lambda$, which is the unique (up to multiplication by constant) locally finite Borel invariant measure of $S$ on $\ZZ$.
\end{lemma}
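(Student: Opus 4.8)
The plan is to verify recurrence first, then uniqueness, and finally to read off ergodicity from these together with the criteria of Section~\ref{sec: setup MCs}. Three structural facts drive the argument. The walk $S$ on $\ZZ$ is weak Feller, since its transition kernel acts on a bounded continuous $f$ by $x\mapsto\E f(x+X_1)$, which is continuous by dominated convergence; it is topologically recurrent by hypothesis; and it is topologically irreducible by Guivarc'h et al.~\cite{French}. With these three properties, the very argument used to derive~\eqref{eq: conventional topological recurrence} in the proof of Theorem~\ref{thm: inducing bijection} applies verbatim to $S$: using topological irreducibility and the weak Feller property one finds, for each $x$, an open $U\ni x$ with $\inf_{y\in U}\P_y(\tau_G'(S)<\infty)>0$, and then topological recurrence and the strong Markov property upgrade this to $\P_x(\tau_G'(S)<\infty)=1$ for \emph{every} $x\in\ZZ$ and every non-empty open $G$.

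To obtain recurrence starting under $\lambda$, I would take $G$ to be a bounded non-empty open ball, so that $\lambda(G)<\infty$ (the Haar measure is locally finite) while $\P_x(\tau_G'(S)<\infty)=1$ for every $x$ by the previous step. Part~2 of the conditions for recurrence and ergodicity in Section~\ref{sec: setup MCs}, in its open-set form, then gives that $S$ is recurrent starting under $\lambda$.

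The substantive step is uniqueness, and I expect it to be the main obstacle, since it falls outside the ergodic-theoretic machinery of the paper and instead exploits the group structure of $\ZZ$. Writing $\mu_X:=\P(X_1\in\cdot)$ for the increment law, a locally finite Borel measure $\mu$ is invariant for $S$ precisely when $\int_\ZZ\mu_X(B-x)\,\mu(dx)=\mu(B)$ for all Borel $B$, that is $\mu*\mu_X=\mu$. Because $\ZZ$ is by definition the smallest closed subgroup containing $\supp\mu_X$, the positive Radon solutions of this convolution equation are classified by Deny's theorem, and recurrence of $S$ (possible only under the hypothesis $d\in\{1,2\}$) eliminates every solution except the constant multiples of Haar measure; hence $\mu=c\lambda$. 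This classical input on recurrent random walks is exactly where $d\le 2$ and recurrence are indispensable.

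Ergodicity then follows cheaply. First I would deduce irreducibility starting under $\lambda$ from uniqueness: if $A$ is an invariant set, so that $\P_x(S_1\in A)=\I_A(x)\Mod{\lambda}$, then $\P_x(S_1\in A^c)=0$ for $\lambda$-a.e.\ $x\in A$ and $\P_x(S_1\in A)=0$ for $\lambda$-a.e.\ $x\in A^c$, and a short computation based on invariance of $\lambda$ shows that both $\lambda|_A$ and $\lambda|_{A^c}$ are again locally finite invariant measures of $S$. If $\lambda(A)$ and $\lambda(A^c)$ were both positive these would be two non-proportional invariant measures, contradicting uniqueness; thus $A$ is $\lambda$-trivial and $S$ is irreducible starting under $\lambda$. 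Having established recurrence and irreducibility, Part~3 of the conditions for recurrence and ergodicity in Section~\ref{sec: setup MCs} yields ergodicity of $S$ starting under $\lambda$, which completes the proof.
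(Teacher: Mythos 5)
Your proposal is correct, and its skeleton coincides with the paper's: the paper likewise derives $\P_x(\tau_G'(S)<\infty)=1$ for every $x\in\ZZ$ and every non-empty open $G$ from topological recurrence (directly via \cite[Theorem~24]{French}; your detour through weak Fellerness and the argument for~\eqref{eq: conventional topological recurrence} is valid but slightly redundant, since the cited theorem already yields probability one for every starting point), then applies the finite-measure open-set form of the recurrence criterion of Section~\ref{sec: setup MCs}, deduces irreducibility starting under $\lambda$ from uniqueness by exactly your argument (a $\lambda$-non-trivial invariant set $A$ would make $\I_A\lambda$ a second, non-proportional, locally finite invariant measure), and concludes ergodicity from Part~3 of the same criterion (Kaimanovich). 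The one genuine divergence is the uniqueness step: the paper simply cites \cite[Proposition~I.45]{French}, which states verbatim that the Haar measure is the unique invariant Radon measure of a topologically recurrent random walk on a second countable locally compact group whose increment law is supported on no proper closed subgroup, whereas you classify the positive Radon solutions of $\mu*\mu_X=\mu$ by Deny's theorem and then invoke recurrence to discard the non-Haar solutions. That route works, but your sentence ``recurrence eliminates every solution except the constant multiples of Haar measure'' compresses a step worth spelling out: by Deny, the extreme solutions on $\ZZ$ are $e^{\langle c,x\rangle}\lambda(dx)$ with $\E e^{\langle c,X_1\rangle}=1$, and if $\langle c,\cdot\rangle$ is non-trivial on $\ZZ$, then strict convexity of the Laplace transform forces $\E\langle c,X_1\rangle<0$ (possibly $-\infty$), so the projected walk $\langle c,S_n\rangle$ drifts to $-\infty$ and $S$ cannot return to neighbourhoods of $0$ infinitely often, contradicting topological recurrence; hence $c$ must annihilate $\ZZ$ and every solution is a multiple of $\lambda$. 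As for what each approach buys: the paper's citation is shorter and covers walks on general, possibly non-abelian, groups, while your Choquet--Deny argument is self-contained modulo one classical theorem, exploits the abelian structure of $\ZZ$ explicitly, and makes transparent exactly where the recurrence hypothesis (and hence the restriction $d\in\{1,2\}$) enters the uniqueness statement.
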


Recall that topological recurrence of the random walk $S$ on $\ZZ$ by definition means that $\P_0(S_n \in G \text{ i.o.})=1$ for every open neighbourhood $G \subset \ZZ$ of $0$. Note that for such random walks, this equality is in fact true for {\it every} non-empty open set $G \subset \ZZ$; see Revuz~\cite[Proposition~3.4]{Revuz}. 
Combined with the results of Chung and Fuchs~\cite[Theorems 1, 3 and~4]{ChungFuchs}, this gives that topological recurrence of $S$ on $\ZZ$ is equivalent  to 
\[
\limsup_{r \to 1-} \int_{[-a,a]^d} \frac{1}{\Re (1- r\E e^{i t \cdot X_1})}dt = \infty \quad \text{for all }a>0;
\]
the limit is always finite for $d \ge 3$. For $d=1$, the limit can be switched with the integral (Ornstein~\cite{Ornstein}). In particular, for $d=1$ this integral diverges when $\E X_1=0$, and it may also diverge for arbitrarily heavy-tailed $X_1$ (Shepp~\cite{Shepp}). In dimension $d=2$, $S$ is topologically recurrent on $\ZZ$ if $\E X_1=0$ and $\E \|X_1\|^2 <\infty$ (Chung and Lindvall~\cite{ChungLindvall}). For more general results on recurrence of random walks on locally compact abelian metrizable groups, see Revuz~\cite[Chapters 3.3 and 3.4]{Revuz}.

\begin{proof}
The uniqueness is by Proposition~I.45 in Guivarc'h et al.~\cite{French}, which states that the right Haar measure on a locally compact Hausdorff topological group $G$ with countable base is a unique invariant Radon Borel measure for any topologically recurrent right random walk on $G$ such that no proper closed subgroup of $G$ contains the support of the distribution of increments of the walk. 

To infer ergodicity, note that uniqueness of invariant measure implies irreducibility of $S$ starting under $\lambda$. In fact, if there is a $\lambda$-non-trivial invariant set $A \in \mathcal{B}(\ZZ)$ of $S$, then the locally finite measure $\I_A \lambda$ is invariant for $S$, which contradicts the uniqueness. Further, by Revuz~\cite[Proposition~3.4]{Revuz}, topological recurrence of $S$ implies that $\P_x(\tau_G'(S) < \infty)=1$ for every $x \in \ZZ$ and every non-empty open set $G \subset \ZZ$. Hence $S$ is recurrent starting under $\lambda$ by Condition~\ref{cond: recur finite} in Section~\ref{sec: setup MCs}. Therefore, $S$ is ergodic by irreducibility and recurrence, all the three properties starting under~$\lambda$ (Kaimanovich~\cite[Proposition~1.7]{Kaimanovich}).
\end{proof}

We say that a Borel set $A \subset \ZZ$ is {\it massive} for the random walk $S$ if $\P_x(\tau_A'(S)<\infty)=1$ for $\lambda$-a.e.\ $x \in \ZZ$. In particular, if $S$ is topologically recurrent, then any Borel set of positive measure $\lambda$ is massive, as follows (Aaronson~\cite[Proposition~1.2.2]{Aaronson}) from ergodicity and recurrence of $S$ starting under $\lambda$ (Lemma~\ref{lem: RW ergodic}). If $S$ is transient (i.e.\ not topologically recurrent), no set of finite measure can be massive. For walks on $\ZZ=\Z^d$ with $d \ge 3$  satisfying $\E X_1 =0$ and $\E \|X_1\|^2 <\infty$, there is a necessary and sufficient condition for massiveness of a set, called Wiener's test, stated in terms of capacity, by It\^o and McKean~\cite{ItoMcKean} and Uchiyama~\cite{Uchiyama}. Easily verifiable sufficient conditions for massiveness in $d=3$ are due to Doney~\cite{Doney1965}. For example, any ``line'' in $\Z^3$ is massive. Under the above assumptions, a set is massive for every such a walk if it is massive for a simple random walk, and so this is a property of a set rather than of a walk. Apart from partial results of Greenwood and Shaked~\cite{GreenwoodShaked} (mentioned below) for convex cones with apex at the origin, we are not aware of any explicit results for non-lattice random walks. It appears (based on the estimates of Green's function in Uchiyama~\cite[Section~8]{UchiyamaGreen}) that such results should be fully analogous to the lattice ones for walks with $\E X_1 =0$ and $\E \|X_1\|^2 <\infty$ if the distribution of $X_1$ has density with respect to the Lebesgue measure. The case of heavy-tailed random walks on $\Z^d$, including transient walks in dimensions $d=1,2$, is considered by Bendikov and Cygan~\cite{BendikovCygan1, BendikovCygan2}. 

Since $-A$ is massive for $S$ if and only if $A$ is massive for $-S$, and the random walk $-S$ is dual to $S$ with respect to the measure $\lambda$ (see, e.g.\ equality (2.24) in~\cite{MijatovicVysotsky}), Theorem~\ref{thm: inducing MC} immediately implies the following result.

\begin{theorem} \label{thm: RW general}
Assume that the sets $A$, $-A$, $A^c$, $-A^c$ are massive for a random walk $S$ on its state space $\ZZ$, where $\ZZ \subset \R^d$ and $d \ge 1$. Then the measures $\lambda_A^{entr}(dx) = \P(X_1 \in x - A^c) \lambda(dx)$ on $A$ and $\lambda_{A^c}^{exit}(dx) = \P(X_1 \in A - x) \lambda(dx)$ on $A^c$ are invariant for the entrance chain $S^{\to A}$ and exit chain $S^{A^c \to}$, respectively.
\end{theorem}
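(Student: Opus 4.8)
The plan is to apply Part~2 of Theorem~\ref{thm: inducing MC} to the Markov chain $Y=S$ on $\XX=\ZZ$ with the $\sigma$-finite invariant Haar measure $\mu=\lambda$, whose dual with respect to $\lambda$ is $\hat Y=-S$. With this choice the formulas in \eqref{eq: measures def} specialize, via $\P_x(\hat Y_1\in A^c)=\P(X_1\in x-A^c)$ and $\P_x(Y_1\in A)=\P(X_1\in A-x)$, to the measures $\lambda_A^{entr}$ and $\lambda_{A^c}^{exit}$ in the statement. Since $S$ may be transient I cannot invoke Part~2a (recurrence); instead I will verify the hypotheses of Part~2b, namely the standing assumption $\P_{\lambda|_{A^c}}(S_1\in A)>0$ together with the four equalities $\P_{\lambda|_{A^c_{ex}}}(\tau_{A_{en}}'(Y)=\infty)=\P_{\lambda|_{A_{en}}}(\tau_{A^c_{ex}}'(Y)=\infty)=0$ and the same two with $Y$ replaced by $\hat Y$.

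The only input is massiveness. Using that $-B$ is massive for $S$ exactly when $B$ is massive for $-S=\hat Y$, the four assumptions of the theorem say precisely that $A$ and $A^c$ are massive for both $Y$ and $\hat Y$, i.e.\ $\tau_A'$ and $\tau_{A^c}'$ are finite $\lambda$-a.e.\ for each chain. The crucial step is the elementary observation that, \emph{under the stationary measure} $\P_\lambda$, massiveness of a set $B$ forces $B$ to be visited infinitely often. Indeed, by invariance $Y_N\sim\lambda$ for every $N$, so the Markov property and a last-exit decomposition give $\P_\lambda(\text{last visit to }B\text{ at time }N)=\int_B\P_x(\tau_B'(Y)=\infty)\,\lambda(dx)=0$ for each $N$; summing over $N$ and adding the ``never visit'' term $\P_\lambda(\tau_B'(Y)=\infty)=0$ yields $\P_\lambda(Y_n\in B\text{ finitely often})=0$. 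The same holds for $\hat Y$, which is also $\lambda$-stationary. Applying this to $B=A$ and $B=A^c$ shows that, $\P_\lambda$-a.s., each of $Y$ and $\hat Y$ visits $A$ and $A^c$ infinitely often, and hence crosses between $A$ and $A^c$ in both directions infinitely often.

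It remains to pass from the full sets $A,A^c$ to the reduced sets $A_{en},A^c_{ex}$ appearing in Part~2b. I will use that these reduced sets are exactly the origin/landing sets of the crossing steps. By stationarity and the time-reversal identity $\P_\lambda((Y_0,Y_1)\in\cdot)=\P_\lambda((\hat Y_1,\hat Y_0)\in\cdot)$ one checks that, $\P_\lambda$-a.s., every $Y$-step $A^c\to A$ starts in $A^c_{ex}$ and lands in $A_{en}$, while every $\hat Y$-step $A\to A^c$ starts in $A_{en}$ and lands in $A^c_{ex}$; in each pair the origin claim is immediate from the definitions of $A^c_{ex}$ and $A_{en}$, and the landing claim follows from time-reversal (this is the fact $\P_{\lambda|_{A^c}}(Y_1\in A\setminus A_{en})=0$ from Section~\ref{sec: setup MCs}). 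Combined with the infinitude of crossings, this shows that $A_{en}$ and $A^c_{ex}$ are each visited infinitely often by both $Y$ and $\hat Y$ under $\P_\lambda$, so $\tau_{A_{en}}'$ and $\tau_{A^c_{ex}}'$ are $\P_\lambda$-a.s.\ finite for each chain; restricting $\P_\lambda$ to the relevant starting set gives the four required equalities, and the existence of crossings gives $\P_{\lambda|_{A^c}}(S_1\in A)>0$. Theorem~\ref{thm: inducing MC} then yields the invariance of $\lambda_A^{entr}$ for $S^{\to A}$ and of $\lambda_{A^c}^{exit}$ for $S^{A^c\to}$. (Alternatively, Remark~\ref{rem: symmetric} lets one deduce the two $\hat Y$-equalities directly from the two $Y$-equalities.)

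The main obstacle is that massiveness is an \emph{a.e.-start} hitting property, which does not compose across successive excursions because the position of $S$ just after an excursion need not have a law absolutely continuous with respect to $\lambda$; I circumvent this entirely by working under the stationary $\P_\lambda$, where every coordinate is $\lambda$-distributed. The second, more technical point is that Part~2b is phrased through the reduced sets $A_{en},A^c_{ex}$ rather than $A,A^c$; the resolution is to recognize $A^c_{ex}$ (resp.\ $A_{en}$) as the set of states from which $Y$ (resp.\ $\hat Y$) can cross, so that infinitely many crossings automatically produce infinitely many visits to the reduced sets. In particular, for the dual chain one must use the exit-origin description $A_{en}=\{x\in A:\P_x(\hat Y_1\in A^c)>0\}$ rather than an entrance-landing description, the latter being unavailable.
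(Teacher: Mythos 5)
Your proof is correct and takes essentially the same route as the paper: the paper obtains Theorem~\ref{thm: RW general} by applying Part~2b of Theorem~\ref{thm: inducing MC} with $\hat Y=-S$ dual to $S$ with respect to $\lambda$ and the massiveness of $A$, $-A$, $A^c$, $-A^c$, calling the implication ``immediate,'' while you explicitly carry out the verification it leaves implicit (the last-exit decomposition under the stationary $\P_\lambda$ to upgrade massiveness to infinitely many visits, and the reduction of crossing steps to the sets $A_{en}$ and $A^c_{ex}$). One minor caveat: your closing parenthetical overstates Remark~\ref{rem: symmetric}, which asserts self-duality of the \emph{full} hypothesis of Part~2b (already containing both the $Y$- and $\hat Y$-equalities) rather than allowing the $\hat Y$-equalities to be deduced from the $Y$-ones, but this is harmless since your direct verification for $\hat Y$ is complete.
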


\begin{remark}
If $\ZZ=\Z^d$ with $d \ge 3$, $\E X_1 =0$ and $\E \|X_1\|^2 <\infty$, then the assumptions on $-A$ and $-A^c$ in Theorem~\ref{thm: RW general} are not required since a set is massive for $S$ whenever it is massive for a simple random walk, which is self-dual. We do not know if such reduction is possible for arbitrary $S$.
\end{remark}

Let us discuss two particular cases. First, if the random walk $S$ is topologically recurrent on $\ZZ$, then the assumptions of Theorem~\ref{thm: RW general} are satisfied for any $\lambda$-non-trivial Borel set $A$. Second, $A$ is of the form $A=A'\cap \ZZ$, where $A'$ is a convex cone in $\R^d$ with apex at zero (and $S$ may be transient). Here massiveness of $A^c$ and $-A^c$ follows from that of $A$ and $-A$. In the case of the non-negative orthant $A'=[0, \infty)^d$, which is of special interest, we have
\[
\{X_1 \in x - A^c\} = \{X_1 \in (x - A)^c\} = \{X_1 \not \in x - A\} = \{X_1 \not \le x\} \text{ a.s.},
\]
where in the last expression and below we mean that inequalities between points in $\R^d$ hold coordinate-wise. Combining this with the analogous expression for the negative orthant, we get the following.

\begin{cor} \label{cor: orthant}
Assume that $\tau_\pm:=\tau_{\pm(0, \infty)^d}'(S)$ are finite $\P_0$-a.s. Then the measures 
$$
\lambda_{[0, \infty)^d}^{entr} = \I_{[0,\infty)^d} (x) (1 - \P(X_1 \le x)) \lambda(dx) \quad \text{and} \quad 
\lambda_{(-\infty,0)^d}^{entr} = \I_{(-\infty,0)^d} (x) (1 - \P(X_1 > x)) \lambda(dx)
$$ 
are invariant for the chains of entrances into $[0, \infty)^d$ and $(-\infty, 0)^d$, respectively.
\end{cor}

Note that the measures $\lambda_{[0, \infty)^d}^{entr}$ and $\lambda_{(-\infty,0)^d}^{entr}$ are always infinite if $\dim(\lin(\ZZ)) \ge 2$.

It is clear that the assumptions of the corollary imply that the expectation of every coordinate $X_1^{(k)}$ of $X_1=(X_1^{(1)}, \ldots, X_1^{(d)})$ is either $0$ or does not exist, i.e.\ $\E (X_1^{(k)})^+ = \E (X_1^{(k)})^- =+\infty$, where $x^+:= \max\{x, 0\}$ and $x^-:=(-x)^+$ for a real $x$. In dimension one, where $\tau_+$ and $\tau_-$ are the first ascending and descending ladder times, this is actually an equivalence (cf.~Feller~\cite[Theorem~XII.2.1]{Feller} and Kesten~\cite[Corollary~3]{Kesten}). This is also equivalent to assuming that the one-dimensional random walk $S$ {\it oscillates}, that is $\limsup S_n = -\liminf S_n = +\infty$ a.s.\ as $n \to \infty$. We are not aware of necessary and sufficient conditions for $\P_0$-a.s.\ finiteness of $\tau_+$ and $\tau_-$  in higher dimensions. By Greenwood and Shaked~\cite[Corollary~3]{GreenwoodShaked}, in any dimension a sufficient condition is 
\[
\sum_{n=1}^\infty \frac1n \P_0(S_n >0)=\sum_{n=1}^\infty \frac1n \P_0(S_n <0) = +\infty.
\]

We now state a uniqueness result. 

\begin{theorem} \label{thm: uniqueness}
Let $S$ be any topologically recurrent random walk on its state space $\ZZ$, where $\ZZ \subset \R^d$ and $d \in \{1,2\}$, and let $A \subset \ZZ$ be any $\lambda$-non-trivial Borel set with $\lambda(\partial A)=0$. Then the entrance chain $S^{\to A}$ is ergodic and recurrent starting under $\lambda_A^{entr}$, which is the unique (up to multiplication by constant) locally finite Borel invariant measure of $S^{\to A}$. The same is true for $S^{A^c \to}$ starting under $\lambda_{A^c}^{exit}$.
\end{theorem}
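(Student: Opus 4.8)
The plan is to assemble the general machinery of Theorems~\ref{thm: inducing MC} and~\ref{thm: inducing bijection} around the concrete facts about recurrent random walks recorded in Lemma~\ref{lem: RW ergodic}. First I would record the structural properties of $S$ as a chain on $\XX=\ZZ$: it is weak Feller because its transition kernel $\P_x(S_1 \in \cdot)=\P(X_1 \in \cdot - x)$ is a translate of a fixed law and hence weakly continuous in $x$; it is topologically recurrent by hypothesis, and therefore topologically irreducible by Guivarc'h et al.~\cite[Theorem~24]{French}. By Lemma~\ref{lem: RW ergodic}, $S$ is recurrent and ergodic starting under $\lambda$, and $\lambda$ is the unique (up to a constant) locally finite Borel invariant measure of $S$. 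Since $\lambda(\partial A)=0$ and $A$ is $\lambda$-non-trivial, both $\Int(A)$ and $\Int(A^c)$ carry positive $\lambda$-measure, so in particular $\Int(A)\neq\varnothing$.

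The one genuine computation, and the main obstacle, is to verify that $S$ actually crosses from $A^c$ into $A$, i.e.\ that $\P_{\lambda|_{A^c}}(S_1 \in A)=\P_\lambda(S_0 \in A^c, S_1 \in A)>0$. I would argue this from recurrence and ergodicity: since $A$ and $A^c$ both have positive $\lambda$-measure, both are massive and, by recurrence, visited infinitely often, so for $\lambda$-a.e.\ $x$ the path $(S_n)$ lies in $A^c$ infinitely often and in $A$ infinitely often $\P_x$-a.s. On every such path there is at least one index $k$ with $S_k \in A^c$ and $S_{k+1}\in A$; hence
\[
0<\lambda(\ZZ)=\P_\lambda\Bigl(\bigcup_{k\ge 0}\{S_k \in A^c,\, S_{k+1}\in A\}\Bigr)\le \sum_{k\ge 0}\P_\lambda(S_k \in A^c,\, S_{k+1}\in A),
\]
and by stationarity of $S$ under $\lambda$ every summand equals $\P_\lambda(S_0 \in A^c, S_1\in A)$, which must therefore be strictly positive. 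Moreover, since $\P_\lambda(S_0\in\partial A)=\P_\lambda(S_1\in\partial A)=\lambda(\partial A)=0$ by stationarity, I can replace $A^c,A$ by $\Int(A^c),\Int(A)$ in this event without changing its $\P_\lambda$-measure, so that $\int_{\Int(A^c)}\P_x(S_1\in\Int(A))\,\lambda(dx)>0$; in particular there is some $x\in\Int(A^c)$ with $\P_x(S_1\in\Int(A))>0$.

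With these facts in hand the two assertions follow mechanically. Ergodicity and recurrence of $S^{\to A}$ under $\lambda_A^{entr}$ and of $S^{A^c \to}$ under $\lambda_{A^c}^{exit}$ come from the final sentence of Theorem~\ref{thm: inducing MC}, whose hypothesis $\P_{\lambda|_{A^c}}(S_1 \in A)>0$ is exactly what was just established, together with recurrence and ergodicity of $S$ under $\lambda$. For uniqueness I would invoke Part~2 of Theorem~\ref{thm: inducing bijection}: its topological hypotheses hold by the first paragraph, and its remaining assumption---existence of $x\in\Int(A^c)$ with $\P_x(S_1\in\Int(A))>0$---is the last display of the second paragraph. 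The theorem then makes $\mu\mapsto\mu_A^{entr}$ and $\mu\mapsto\mu_{A^c}^{exit}$ bijections between the locally finite invariant measures of $S$ and those of $S^{\to A}$ and $S^{A^c \to}$; since these maps are linear (the densities $\P(X_1\in x-A^c)$ and $\P(X_1\in A-x)$ do not depend on $\mu$) and the domain is the single ray $\{c\lambda:c\ge 0\}$ by Lemma~\ref{lem: RW ergodic}, the images are exactly the rays through $\lambda_A^{entr}$ and $\lambda_{A^c}^{exit}$, giving the claimed uniqueness up to a multiplicative constant.
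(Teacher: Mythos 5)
Your proposal is correct and follows the paper's route exactly: the paper's entire proof is the single remark that the theorem ``follows by combining Theorems~\ref{thm: inducing MC},~\ref{thm: inducing bijection} and Lemma~\ref{lem: RW ergodic} with the fact that the transition kernel of any random walk is weak Feller and the inequality $\P_{\lambda_{\Int(A^c)}}(S_1 \in \Int(A))>0$'', and your first and third paragraphs are a faithful, properly detailed expansion of that remark --- including the linearity observation needed to turn the bijections of Theorem~\ref{thm: inducing bijection} into uniqueness up to a constant, and the correct appeal to the final sentence of Theorem~\ref{thm: inducing MC} for recurrence and ergodicity of the sampled chains. The one place you genuinely diverge is the verification of the crossing inequality: the paper proves $\P_{\lambda_{\Int(A^c)}}(S_1 \in \Int(A))>0$ by contradiction, noting that otherwise the $\lambda$-non-trivial set $\Cl(A)$ would be invariant for $S$, contradicting the irreducibility of $S$ under $\lambda$ that comes with uniqueness of its invariant measure; you instead argue directly, using recurrence and massiveness (both available from Lemma~\ref{lem: RW ergodic} and the massiveness discussion of Section~\ref{Sec: Application to RWs}) to force infinitely many $A^c \to A$ crossings along $\P_\lambda$-a.e.\ path, stationarity to identify each term $\P_\lambda(S_k \in A^c,\, S_{k+1} \in A)$ with $\P_\lambda(S_0 \in A^c,\, S_1 \in A)$, and $\lambda(\partial A)=0$ together with $A^c \setminus \Int(A^c) \subset \partial A$, $A \setminus \Int(A) \subset \partial A$ to pass to interiors. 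Both verifications are sound and both use $\lambda(\partial A)=0$ in an essential way; the paper's is shorter, while yours has the mild advantage of first producing $\P_{\lambda|_{A^c}}(S_1 \in A)>0$ --- the exact hypothesis of the last sentence of Theorem~\ref{thm: inducing MC} --- and then upgrading it to the interior version required by Theorem~\ref{thm: inducing bijection}, so the two hypotheses are established in one stroke rather than deduced one from the other.
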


This follows from Theorems~\ref{thm: inducing MC},~\ref{thm: inducing bijection} and Lemma~\ref{lem: RW ergodic} using that the transition kernel of any random walk is weak Feller by $\P_x(S_1 \in \cdot)=\P(x+ X_1 \in \cdot)$; the inequality $\P_{\lambda_{\Int(A^c)}}(S_1 \in \Int(A))>0$ holds true since otherwise $\Cl(A)$  would be a $\lambda$-non-trivial set invariant for $S$. The assumption  $\lambda(\partial A)=0$ of the theorem can be relaxed but we prefer to avoid considering sets with ``thick'' boundary such as $(\R \setminus \Q) \cap [0,1]$.

Recall that  for $d=1$ we have $\pi= \frac12 \pi_+ + \frac12 \pi_-$ with  $\pi_+=c_1 \lambda_{[0, \infty)}^{entr}$ and $\pi_- = c_1 \lambda_{(- \infty,0)}^{entr} $.

\begin{cor} \label{cor: overshoots}
If a one-dimensional random walk $S$ is topologically recurrent on $\ZZ$, then the chains of overshoots $O$, $O^\downarrow$, and $\mathcal{O}$ (defined in the Introduction) are ergodic and recurrent starting respectively under their unique normalized invariant measures $\pi_+$, $\pi_-$, and $\pi$.
\end{cor}

Finally, we comment on stability of the ``distribution'' of the entrance chain into $A$. This question makes a probabilistic sense only if the measure $\lambda_A^{entr}$ is finite and therefore can be normalized to be a probability. For example, this is the case when $A = [0, \infty) $, $\E X_1 =0$ and $d=1$ or the $S$ is topologically recurrent on $\ZZ$, $A$ is bounded, and $d \in \{1,2\}$. In the former case, the question of stability is studied in our companion paper~\cite{MijatovicVysotsky}. In the latter case, it is reasonable to restrict the attention to convex and compact sets $A$. These are intervals for $d=1$, considered in~\cite[Section~5.1]{MijatovicVysotsky}. It appears that convergence results in dimension $d=2$ can be obtained using exactly the same approach as in~\cite{MijatovicVysotsky}.


\section{Futher results on level-crossings for one-dimensional random walks} \label{Sec: L_n}

Throughout  this section the random walk $S$ is  one-dimensional.

\subsection{The limit theorem for the number of level-crossings} 
Recall that $L_n$, defined in \eqref{eq: Ln def}, denotes the number of zero-level crossings of  $S$ by time~$n$. Combining Theorem~\ref{thm: uniqueness} on ergodicity of the chain of overshoots with a result by Perkins~\cite{Perkins} on convergence of local times of random walks, we obtain the following central limit theorem for $L_n$. To the best of our knowledge, all other results of this type require some smoothness assumptions for the distribution of increments of the walk. 

\begin{theorem} \label{thm: level-crossings}
For any random walk $S$ such that $\E X_1 =0$ and $\sigma^2:=\E X_1^2 \in (0, \infty)$, we have
$$
\lim_{n \to \infty} \P_x \bigg( \frac{L_n}{\sqrt{n}} \le y \bigg) = 2\Phi \bigg( \frac{\sigma y}{2\E|X_1|}  \bigg)-1, \qquad x \in \ZZ, \, y \ge 0,
$$ 
where $\Phi$ denotes the distribution function of a standard normal random variable.
\end{theorem}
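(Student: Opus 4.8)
The plan is to write $L_n/\sqrt n$ as a ratio whose numerator is controlled by Perkins's local-time convergence \eqref{eq: Perkins 0} and whose denominator is controlled by the ergodic theorem for the overshoot chain $\mathcal O$, and then to identify the limiting law via L\'evy's local-time identity. First I would record the two constants involved. By the Birkhoff ergodic theorem applied to the shift on the path space of $\mathcal O$, which is ergodic and recurrent starting under the probability measure $\pi$ by the corollary to Theorem~\ref{thm: uniqueness}, the averages $\frac1m\sum_{k=1}^m|\mathcal O_k|$ converge $\P_x$-a.s.\ to $\int_\ZZ|x|\,\pi(dx)$ for $\pi$-a.e.\ starting point. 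A direct computation from the density of $\pi$ in \eqref{eq: pi} (using $\int_0^\infty x\,\P(X_1>x)\,dx=\tfrac12\E[X_1^2\I(X_1>0)]$ and its negative-axis counterpart) gives
\begin{equation*}
\int_\ZZ |x|\,\pi(dx)=\frac{c_1}{4}\,\E[X_1^2]=\frac{\sigma^2}{2\,\E|X_1|}\in(0,\infty),
\end{equation*}
since $c_1=2/\E|X_1|$ when $\E|X_1|<\infty$.

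Next, since $\E X_1=0$ and $0<\sigma^2<\infty$ force $S$ to be recurrent and to oscillate, we have $L_n\to\infty$ $\P_x$-a.s.\ for every $x$, so $\frac{1}{L_n}\sum_{k=1}^{L_n}|\mathcal O_k|\to \sigma^2/(2\E|X_1|)$ a.s.\ as well. Writing
\begin{equation*}
\frac{L_n}{\sqrt n}=\left(\frac{1}{\sqrt n}\sum_{k=1}^{L_n}|\mathcal O_k|\right)\Big/\left(\frac{1}{L_n}\sum_{k=1}^{L_n}|\mathcal O_k|\right),
\end{equation*}
the numerator converges in distribution to $\sigma\ell_0$ by \eqref{eq: Perkins 0}, while the denominator converges a.s.\ to the positive constant $\sigma^2/(2\E|X_1|)$; Slutsky's theorem then yields $L_n/\sqrt n \todistr \frac{2\E|X_1|}{\sigma}\,\ell_0$. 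Finally, L\'evy's theorem identifies $\ell_0$ in distribution with $|N(0,1)|$, so $\P\bigl(\tfrac{2\E|X_1|}{\sigma}\ell_0\le y\bigr)=\P\bigl(|N(0,1)|\le \tfrac{\sigma y}{2\E|X_1|}\bigr)=2\Phi\bigl(\tfrac{\sigma y}{2\E|X_1|}\bigr)-1$, which is the claimed limit.

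The main obstacle is that both inputs above are, a priori, available only for special starting points: Perkins's theorem \eqref{eq: Perkins 0} is stated for $S_0=0$, and the Birkhoff convergence of the denominator holds only for $\pi$-a.e.\ starting point, while $\{0\}$ itself may well be $\pi$-null. Upgrading both to hold for \emph{every} $x\in\ZZ$ is the crux. For the denominator I would argue via the strong Markov property and recurrence: the set $H$ of overshoot values from which the Ces\`aro average converges has $\pi(H)=1$, the Ces\`aro limit is unchanged by deleting any finite initial segment of the chain, and from an arbitrary $x$ the chain $\mathcal O$ reaches $H$ in finite time almost surely. Establishing this hitting statement for every $x$ is delicate precisely because $\mathcal O$ is neither weak Feller nor $\psi$-irreducible in general; I would deduce it from the topological recurrence of the underlying walk $S$ (so that $S$, and hence the associated entrance chain, enters every $\lambda$-positive region from every starting point), transferring recurrence through the inducing correspondence of Section~\ref{sec: stationarity MC}. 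For the numerator, the starting point is asymptotically irrelevant because the excursion of $S$ before its first zero-crossing contributes only finitely many crossings and a bounded sum, negligible at scale $\sqrt n$; after that crossing the walk restarts from an overshoot whose post-crossing local-time asymptotics coincide with those from $0$, so \eqref{eq: Perkins 0} transfers to every $x$ by a strong Markov and coupling argument.
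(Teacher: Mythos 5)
Your skeleton is exactly the paper's: Perkins's convergence \eqref{eq: Perkins 0} for the numerator, a law of large numbers for $\frac1{L_n}\sum_{k\le L_n}|\mathcal O_k|$ for the denominator, Slutsky, and L\'evy's identification $\ell_0\eqdistr|N(0,1)|$, with the correct constant $\int_\ZZ|y|\,\pi(dy)=\sigma^2/(2\E|X_1|)$. You also correctly locate the crux in upgrading both inputs from special starting points to every $x\in\ZZ$. But both of your proposed upgrades have genuine gaps. For the denominator, the hitting argument fails: you only know $\pi(H)=1$, so $H^c\cap\supp\pi$ is $\lambda$-null, and topological recurrence of $S$ gives visits to non-empty \emph{open} sets, not to sets of full measure. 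From a fixed $x$ the entire trajectory of $\mathcal O$ may stay forever inside a countable $\lambda$-null set --- for instance, if the increment distribution is non-lattice but purely atomic, all overshoots started from $x$ lie in the countable coset of the group generated by the atoms --- and nothing prevents that coset from being disjoint from $H$. This is exactly the obstruction created by the failure of $\psi$-irreducibility that you yourself flag, and ``transferring recurrence through the inducing correspondence'' does not resolve it, since inducing only yields statements modulo $\pi$-null sets. The paper's Proposition~\ref{prop: LLN} avoids any hitting claim: the lattice case is immediate (there $\pi$-a.e.\ point of the support is every point), and in the non-lattice case it compares, \emph{pathwise along the same driving walk}, the crossing functionals started at $x$ and at a nearby point $y$, bounding the discrepancy by occupation functionals of $\{|y+S_k'|\le\delta\}$; Hopf's ratio ergodic theorem under $\P_\lambda$ controls these ratios for $\lambda$-a.e.\ $y$, and one then picks $y\in N\cap N_\delta$ with $|x-y|\le\delta$ and lets $\delta\to0$. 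That quantitative perturbation argument is the missing idea.

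For the numerator your reduction is circular: discarding the initial excursion restarts the walk at the random \emph{nonzero} point $\mathcal O_1$, so you need Perkins's theorem for arbitrary nonzero deterministic starting points --- which is precisely the statement being transferred --- and no coupling of walks from distinct starting points is available for general (e.g.\ atomic non-lattice) increments, for the same reason as above. The paper does not derive the extension by a Markov reduction at all: it asserts (citing Perkins's Lemma~3.2 and a private communication) that Perkins's nonstandard-analysis proof goes through verbatim for an arbitrary starting point, i.e.\ the extension is a fact about the internals of that proof. With these two repairs --- the approximation-plus-Hopf argument in place of hitting, and the verbatim transfer of Perkins's proof in place of coupling --- the remainder of your argument (the ratio decomposition, Slutsky, and the moment computation) coincides with the paper's.
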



We will need the following auxiliary result, the law of large numbers for the chain $\mathcal O$. It does not follow directly from ergodicity of $\mathcal O$ (stated in Corollary~\ref{cor: overshoots}) since Birkhoff's ergodic theorem implies convergence of the time averages only for $\pi$-a.e.\ $x$.

\begin{proposition} \label{prop: LLN}
Let $S$ be any random walk such that $\E X_1 =0$ and $\sigma^2:=\E X_1^2 \in (0, \infty)$. Then for \emph{every} $x \in \ZZ$,
\begin{equation} \label{eq: LLN overshoots}
\lim_{n \to \infty} \frac{1}{n} \sum_{k=1}^n |\mathcal{O}_k| = \int_{\ZZ} |y| \pi(dy) = \frac{\sigma^2}{2 \E |X_1|}, \quad \P_x \text{-a.s.}
\end{equation}
\end{proposition}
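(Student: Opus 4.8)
The plan is to realise both the sum $\sum_{k=1}^n|\mathcal O_k|$ and the index $n$ as additive functionals of the underlying walk $S$, to obtain the limit for $\lambda$-almost every starting point via Hopf's ratio ergodic theorem, and finally to upgrade to \emph{every} $x$ by recognising the relevant probability as a bounded harmonic function of the recurrent walk.

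First I would record the constant. Since $\E X_1=0$ and $\sigma^2<\infty$ the walk is recurrent and, by Lemma~\ref{lem: RW ergodic}, conservative and ergodic starting under the Haar measure $\lambda$; in particular $\pi$ is a probability, and an elementary computation (integration by parts, using $\int_0^\infty s\,\P(X_1>s)\,ds=\tfrac12\E[(X_1^+)^2]$ and its mirror image) gives $\int_\ZZ|y|\,\pi(dy)=\tfrac{c_1}{4}\E X_1^2=\tfrac{\sigma^2}{2\E|X_1|}$, which in particular is finite. Next I would introduce on $\ZZ^{\N_0}$ the two functions of the first two coordinates
\[
g(\omega):=\I\{(\omega_0,\omega_1)\text{ is a zero-level crossing}\},\qquad f(\omega):=|\omega_1|\,g(\omega),
\]
so that with the shift $\theta$ one has $B_n:=\sum_{k=0}^{n-1}g(\theta^kS)=L_n$ and $A_n:=\sum_{k=0}^{n-1}f(\theta^kS)=\sum_{j=1}^{L_n}|\mathcal O_j|$ exactly. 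A short calculation yields $\int g\,d\PP_\lambda^S=\E|X_1|<\infty$ and $\int f\,d\PP_\lambda^S=\tfrac{\sigma^2}{2}<\infty$ (finiteness of the second integral is precisely where $\E X_1^2<\infty$ enters).

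Applying Hopf's ratio ergodic theorem to the conservative ergodic shift $\theta$ on $(\ZZ^{\N_0},\mathcal B(\ZZ^{\N_0}),\PP_\lambda^S)$ then gives
\[
\frac{A_n}{B_n}\longrightarrow R:=\frac{\int f\,d\PP_\lambda^S}{\int g\,d\PP_\lambda^S}=\frac{\sigma^2}{2\E|X_1|}=\int_\ZZ|y|\,\pi(dy)
\]
$\P_x$-a.s.\ for $\lambda$-a.e.\ $x$. Because $\E X_1=0$ forces oscillation, $L_n\to\infty$ $\P_x$-a.s.\ for \emph{every} $x$, and since $L_n$ increases by exactly one at each crossing it exhausts $\N$; hence the convergence $A_n/B_n\to R$ is equivalent to $\tfrac1N\sum_{k=1}^N|\mathcal O_k|\to R$, which is the asserted limit.

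The main obstacle is passing from $\lambda$-a.e.\ $x$ to every $x$, and this is where I expect the real work. I would set $E:=\{A_n/B_n\to R\}$ and $\tilde h(x):=\P_x(E)$. Using $A_n\circ\theta=A_{n+1}-f(S_0,S_1)$ and $B_n\circ\theta=B_{n+1}-g(S_0,S_1)$ with $f(S_0,S_1),g(S_0,S_1)$ finite and $B_n\to\infty$ a.s., the event $E$ is shift-invariant modulo $\P_x$-null sets for every $x$, so the Markov property gives $\tilde h(x)=\E_x[\tilde h(S_1)]=\int_\ZZ\tilde h(x+y)\,\P(X_1\in dy)$; that is, $\tilde h$ is a bounded harmonic function of the walk $S$. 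For a recurrent random walk whose increments generate $\ZZ$ as a closed subgroup, every bounded harmonic function is constant by the Choquet--Deny theorem, so $\tilde h$ is constant on $\ZZ$; since the previous step shows $\tilde h=1$ $\lambda$-a.e.\ and $\lambda$ charges every nonempty open set, that constant equals $1$. Thus $\P_x(E)=1$ for every $x\in\ZZ$, completing the proof. The substantive input is the clean invocation of Choquet--Deny, the remaining integrability and identification computations being routine.
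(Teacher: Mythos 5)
Your first half is correct and is essentially the paper's own opening move: realise the crossing count and the overshoot sum as Birkhoff sums of the two-coordinate functions $g$ and $f$ over the shift on $(\ZZ^{\N_0},\mathcal B(\ZZ^{\N_0}),\PP_\lambda^S)$, which is conservative and ergodic by Lemma~\ref{lem: RW ergodic} (the walk is topologically recurrent since $\E X_1=0$, $\sigma^2<\infty$), compute $\int g\,d\PP_\lambda^S=\E|X_1|$ and $\int f\,d\PP_\lambda^S=\sigma^2/2$, and apply Hopf's ratio ergodic theorem; since $L_n\uparrow\infty$ with unit jumps, the ratio convergence is equivalent to the Ces\`aro statement. (In the lattice case your moment computation needs the $\pm h/2$ corrections as in \eqref{eq: E 1}--\eqref{eq: E 2}, which cancel because $\E X_1=0$; and in that case you are already done, since $\lambda$-a.e.\ means every point of $h\Z$.) This gives \eqref{eq: LLN overshoots} for $\lambda$-a.e.\ $x$, exactly as the paper obtains it.

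The gap is the Choquet--Deny step, and it is fatal in precisely the regime the proposition is designed to cover (non-lattice increments with no smoothness). Choquet--Deny gives \emph{exact} constancy only for bounded \emph{continuous} harmonic functions; for a merely measurable, everywhere-harmonic bounded function it yields constancy only $\lambda$-a.e., and $\tilde h(x)=\P_x(E)$ has no a priori continuity here (the paper even remarks, in Remark~\ref{rem: not Feller}, that the related chains fail to be weak Feller). The inference ``$\tilde h=1$ $\lambda$-a.e.\ and $\lambda$ charges open sets, hence $\tilde h\equiv 1$'' is invalid: take increments uniform on $\{\pm1,\pm\sqrt2\}$, so $\ZZ=\R$ but every law $\P_x(S_n\in\cdot)$ is purely atomic and singular with respect to $\lambda$. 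With $G_0:=\Z+\sqrt2\,\Z$ (countable, dense), the function $h(x):=\I(x\notin G_0)$ is bounded, harmonic at \emph{every} $x$ (the walk never leaves the coset $x+G_0$), equals $1$ $\lambda$-a.e., yet vanishes on the dense set $G_0$. Note that your $E$ is an exchangeable event, so by Hewitt--Savage $\tilde h$ is $\{0,1\}$-valued --- exactly the indicator-of-a-coset shape of this counterexample --- so nothing rules out $\tilde h<1$ on a $\lambda$-null set of starting points, which is the whole question. This is why the paper replaces the soft harmonic-function argument by a quantitative coupling: for $x\in\supp\pi$ it runs the walks from $x$ and from a nearby good point $y$ with the \emph{same} increments, bounds the discrepancy between the two ratio functionals by the visit frequencies to a $\delta$-neighbourhood of the level (a second application of Hopf's theorem, with limiting constants $O(\delta)$ as in \eqref{eq: estimate f}--\eqref{eq: constants}), and lets $\delta\to0$ using density of the good set in $\supp\pi$. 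Your proof would be complete only under an extra assumption (e.g.\ spread-out increments, which upgrades a.e.\ constancy of harmonic functions to everywhere), or by substituting some such perturbation argument for the Choquet--Deny step.
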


\begin{proof}[{\bf Proof of Theorem~\ref{thm: level-crossings}}]
Denote by $\ell_0$ the local time at $0$ at time $1$ of a standard Brownian motion. By L\'evy's theorem, $\ell_0$ has the same distribution as the absolute value of a standard normal random variable. Combining this result with Theorem~1.3  by Perkins~\cite{Perkins}, we get
\begin{equation} \label{eq: Perkins}
\lim_{n \to \infty} \P_x \bigg (\frac{1}{\sigma \sqrt{n}} \sum_{k=1}^{L_n} |\mathcal{O}_k|  \le y \bigg ) = 2\Phi (y)-1, \qquad x=0, \, y \ge 0;
\end{equation}
note that since Perkins's definition of crossing times is slightly different from the one of ours, his result shall be applied to the random walk $-S/\sigma$. On the other hand, by Proposition~\ref{prop: LLN}, 
\begin{equation} \label{eq: LLN overshoots 2}
\lim_{n \to \infty} \frac{1}{L_n'} \sum_{k=1}^{L_n} |\mathcal{O}_k| = \frac{\sigma^2}{2 \E |X_1|}, \quad \P_x \text{-a.s.}, \qquad x \in \ZZ,
\end{equation}
where $L_n':=L_n+\I(L_n = 0)$ and we used the fact that $\P_x(\lim_{n \to \infty} L_n = \infty)=1$, which holds true since $S$ oscillates. Rewriting  equality \eqref{eq: Perkins} using the identity $\frac{1}{\sqrt n} = \frac{L_n'}{\sqrt n} \cdot \frac{1}{L_n'}$ and then combining it with \eqref{eq: LLN overshoots 2} yields the assertion of  Theorem~\ref{thm: level-crossings} for $x=0$ by Slutsky's theorem.

Furthermore, the results of Perkins actually imply (by Perkins~\cite{PerkinsPrivate}) that equality \eqref{eq: Perkins} remains valid, although this is not stated in~\cite[Theorem 1.3]{Perkins}, if we replace $x = 0$ by $x_n \in \ZZ$ for any sequence $(x_n)_{n \ge 1} \subset \ZZ$ such that $\lim_{n \to \infty} x_n/\sqrt n =0$. In particular, we can take $x_n \equiv x$ for an arbitrary $x \in \ZZ$, which yields Theorem~\ref{thm: level-crossings} in full by the above argument.

Let us explain in detail the extension of \eqref{eq: Perkins} stated above. For $x=0$, Theorem 1.3 of  Perkins~\cite{Perkins}  is an immediate corollary to his Lemma 3.2 and Corollary 2.2. Our extension of \eqref{eq: Perkins}  follows in exactly the same way if we let $x$ in Lemma 3.2 be the {\it nearstandard} point in $^*\R$, the field of {\it nonstandard real numbers}, that corresponds to the sequence $(x_n)_{n \ge 1} $, in which case $st(x) = \text{\textdegree} x = 0$, i.e.\ the {\it standard part} of $x$ is $0$. We referred to Cutland~\cite{Nonstandard} to digest the  unusual notation and concepts of nonstandard analysis, which were used in~\cite{Perkins} with no explanation.
\end{proof}

\begin{proof}[{\bf Proof of Proposition~\ref{prop: LLN}}]


Denote $h:=\inf\{z \in \ZZ: z>0\}$; then either $\ZZ=h\Z$ if $h>0$ or $\ZZ=\R$ if $h=0$.
One can easily check that for $\pi_+$ (defined in \eqref{eq: inv quadrant} with $c_1=2/\E|X_1|$),
\begin{equation} \label{eq: E 1}
\int_{\ZZ} y \pi_+(dy) = \frac{2}{\E |X_1|} \int_h^\infty (y -h/2) \P(X_1 >y) dy = \frac{2}{\E |X_1|}\int_0^\infty (y -h/2) \P(X_1 >y) dy 
\end{equation}
and, similarly, 
\begin{equation} \label{eq: E 2}
-\int_{\ZZ} y \pi_-(dy) = \frac{2}{\E |X_1|} \int_0^\infty (y +h/2) \P(-X_1 >y) dy.
\end{equation}
Using that $\E X_1 = 0$ and integrating the above equality by parts, we find that the probability measure $\pi$, which, recall, satisfies $\pi = \frac12 \pi_+ + \frac12 \pi_-$, has the first absolute moment $\sigma^2/(2 \E|X_1|)$. Therefore, by Birkhoff's ergodic theorem and ergodicity of the chain of overshoots $\mathcal O$ asserted in Corollary~\ref{cor: overshoots}, the convergence in \eqref{eq: LLN overshoots} holds true for $\pi$-a.e.\ $x \in \ZZ$. We need to prove this for every $x \in \ZZ$. 

Denote by $\supp \pi$ the topological support of $\pi$ and by $N$ the set of points $x \in \supp \pi$ that satisfy \eqref{eq: LLN overshoots}. We clearly have $N = \supp \pi$ in the lattice case $h>0$, where $\ZZ$ is discrete. In the non-lattice case $h=0$, so far we only have that $N$ is dense in $\supp \pi$. This is because $N$ has full measure $\pi$, hence $N$ has full Lebesgue measure $\lambda|_{\supp \pi}$, as readily seen from definition~\eqref{eq: pi} of $\pi$. In order to prove \eqref{eq: LLN overshoots}, we need to show that $N = \supp \pi$, since the chain $\mathcal{O}$ hits the support of $\pi$ (which is a closed interval, possibly infinite) at the first step regardless of the starting point. Our argument goes as follows.

Consider the random walk $S':= (S_n')_{n \ge 0}$, where $S_n'= X_1+ \ldots + X_n$ for $n \ge 1$,  starting at $S_0':=0$. Then $\P_x(S \in \cdot) = \P\big((x+S_0', x+ S_1', \ldots) \in \cdot\big)$. For real $y_1, y_2$, define the functions
\[
g(y_1, y_2):= \I(y_1<0, y_2 \ge 0 \text{ or } y_1 \ge 0, y_2< 0), \quad f(y_1, y_2) := |y_2| g(y_1, y_2).
\]
We claim that for any $x \in \supp \pi$ and $\varepsilon \in (0,1)$, there exists a $ y \in N$ such that 
\begin{equation} \label{eq: approximation by N}
\limsup_{n \to \infty} \left | \frac{\sum_{k=1}^n f(y + S_{k-1}', y+ S_k')}{\sum_{k=1}^n g(y + S_{k-1}', y+ S_k')} -  \frac{\sum_{k=1}^n f(x + S_{k-1}', x+ S_k')}{\sum_{k=1}^n g(x + S_{k-1}', x+ S_k')} \right | \le \varepsilon, \quad \P \text{-a.s.}
\end{equation}
This will imply that $x \in N$ and hence prove Proposition~\ref{prop: LLN}, since 
\begin{equation*} 
\P \left( \lim_{n \to \infty}  \frac{\sum_{k=1}^n f(y + S_{k-1}', y + S_k')}{\sum_{k=1}^n g(y + S_{k-1}', y + S_k')} = \frac{\sigma^2}{2 \E |X_1|} \right) = \P_y \left( \lim_{n \to \infty}  \frac{1}{L_n'} \sum_{k=1}^{L_n}  |\mathcal{O}_k| = \frac{\sigma^2}{2 \E |X_1|} \right) = 1,
\end{equation*}
where $L_n'=L_n+\I(L_n = 0)$ and  the last equality holds by definition of the set $N$ and the fact that $\P_y(\lim_{n \to \infty} L_n = \infty)=1$, which is true because $S$ oscillates. Thus, it remains to prove inequality \eqref{eq: approximation by N}.

From the identity $\frac{a_1}{b_1} - \frac{a_2}{b_2}= \frac{a_1}{b_1} \bigl (1 -  \frac{a_2}{a_1} \cdot \frac{b_1}{b_2}  \bigr)$ for $a_1, a_2, b_1, b_2 >0$ and the inequality $\big| 1 -\frac{a}{b} \big| < 2|a-1| + 2|b-1|$ for $a>0, b>\frac12$, we see that \eqref{eq: approximation by N} will follow if we show that for any $x \in \supp \pi$ and $\varepsilon \in (0, \sigma^2/ (2\E|X_1|))$, there exists a $ y \in N$ such that $\P$-a.s.,
\begin{equation} \label{eq: bound}
\limsup_{n \to \infty} \left [ \left | \frac{\sum_{k=1}^n f(x+S_{k-1}', x + S_k')}{\sum_{k=1}^n f(y+S_{k-1}', y + S_k')} -1 \right| + \left | \frac{\sum_{k=1}^n g(x+S_{k-1}', x + S_k')}{\sum_{k=1}^n g(y+S_{k-1}', y + S_k')} -1 \right| \right ] \le \frac{ \varepsilon \E|X_1|}{\sigma^2}.
\end{equation}

For any $\delta >0$, integer $k \ge 1$, and any $y \in N$ such that $|x- y| \le \delta$, we have
\[
|g(x + S_{k-1}', x+ S_k') - g(y + S_{k-1}', y+ S_k')| \le \I(|y+ S_{k-1}'| \le \delta \text{ or } |y+ S_k'| \le \delta)
\]
and
\begin{multline*}
|f(x + S_{k-1}', x+ S_k') - f(y + S_{k-1}', y+ S_k')| \\
\le \delta g(y + S_{k-1}', y + S_k') + (|y + S_k'| + \delta) \I(|y+ S_{k-1}'| \le \delta \text{ or } |y+ S_k'| \le \delta).
\end{multline*}
This gives
\begin{equation} \label{eq: estimate g}
\left | \frac{\sum_{k=1}^n g(x+S_{k-1}', x + S_k')}{\sum_{k=1}^n g(y+S_{k-1}', y + S_k')} -1 \right| \\
\le \frac{\sum_{k=1}^n \bigl[ \I(|y+ S_{k-1}'| \le \delta) +  \I( |y+ S_k'| \le \delta) \bigr ]}{\sum_{k=1}^n g(y+S_{k-1}', y + S_k')}
\end{equation}
and
\begin{multline} \label{eq: estimate f}
\left | \frac{\sum_{k=1}^n f(x+S_{k-1}', x + S_k')}{\sum_{k=1}^n f(y+S_{k-1}', y + S_k')} -1 \right| \\
\le \frac{\sum_{k=1}^n \bigl[ \delta g(y + S_{k-1}', y + S_k') + (|X_k| + 2\delta) \I(|y+ S_{k-1}'| \le \delta) + 2 \delta \I( |y+ S_k'| \le \delta) \bigr ]}{\sum_{k=1}^n f(y+S_{k-1}', y + S_k')}.
\end{multline}

By Lemma~\ref{lem: Radon}, the topologically recurrent random walk $S$ on $\ZZ=\R$ is recurrent and ergodic starting under the Lebesgue measure $\lambda$. By Condition~\ref{cond: conserv} in Section~\ref{sec: setup MCs}, recurrence of $S$ starting under $\lambda$ implies conservativity of the measure preserving one-sided shift $\theta$ on $(\R^{\N_0}, \mathcal{B}(\R^{\N_0}), \PP_{\lambda}^S)$. Therefore we can apply Hopf's ratio ergodic theorem (see the Appendix) to the ratios on the r.h.s.'s of \eqref{eq: estimate g} and \eqref{eq: estimate f}. Let us explain in details, say, why 
\begin{equation}\label{eq:Hopfpart}
\P \left (\lim_{n\to \infty}\frac{\sum_{k=1}^n  g(y + S_{k-1}', y + S_k') }{\sum_{k=1}^n f(y+S_{k-1}', y + S_k')} = \frac{\E|X_1|}{\sigma^2/2}\right) =1, \qquad \lambda\text{-a.e. } y.
\end{equation}

Indeed, consider the functions on $\R^{\N_0}$ defined by $G(z):=g(z_0, z_1)$ and $F(z):=f(z_0,z_1)$ for $z=(z_0, z_1, \ldots) \in \R^{\N_0}$. Both functions are non-negative, non-zero, and $\PP_{\lambda}^S$-integrable by
\[
\EE_{\lambda}^S  G= \int_\R \E_{z_0} g(S_0, S_1)  \lambda(d z_0) 
=  \int_{-\infty}^0 \P(z_0+X_1 \ge 0) dz_0 + \int_0^\infty \P(z_0+X_1 <0) d z_0  = \E|X_1|
\]
and 
\begin{align*}
\EE_{\lambda}^S F&= \int_\R \E_{z_0} [|S_1| g(S_0, S_1)]  \lambda(d z_0) \\
&=  \int_{-\infty}^0 \E [(z_0 + X_1) \I (z_0+X_1 \ge 0)] dz_0 - \int_0^\infty \E [(z_0 + X_1) \I (z_0+X_1 < 0)] d z_0 \\
&=  \int_0^\infty \E [(| X_1| -z_0) \I (|X_1|  > z_0)] d z_0 = \E |X_1|^2/2,
\end{align*}
where the last equality follows from Fubini's theorem. Finally, we have
\begin{multline*}
\PP_{\lambda}^S \left (\limsup_{n\to \infty} \bigg| \frac{\sum_{k=0}^{n-1} G\circ \theta^k}{\sum_{k=0}^{n-1} F \circ \theta^k}  - \frac{\EE_{\lambda}^S G}{\EE_{\lambda}^S F}  \bigg| \neq 0  \right) \\
=
\int_\R \P \left (\limsup_{n\to \infty} \bigg| \frac{\sum_{k=1}^n  g(y + S_{k-1}', y + S_k') }{\sum_{k=1}^n f(y+S_{k-1}', y + S_k')} - \frac{\E|X_1|}{\sigma^2/2}\bigg| \neq 0  \right) \lambda(dy),
\end{multline*}
hence equality~\eqref{eq:Hopfpart} follows from Hopf's ratio ergodic theorem.

Similarly to~\eqref{eq:Hopfpart}, for every $\delta >0$, for $\lambda$-a.e.\ $y$ the sum of the ratios on the r.h.s.'s of \eqref{eq: estimate g} and \eqref{eq: estimate f} converges $\P$-a.s.\ as $n \to \infty$ to 
\begin{equation*} 
c(\delta):=\frac{\delta \E |X_1| + 2 \delta (\E |X_1| + 2\delta) + 4 \delta^2 }{\sigma^2/2} + \frac{4 \delta}{\E |X_1|}.
\end{equation*}
Denote by $N_\delta$ the set of $y$ where this $\P$-a.s.\ convergence holds true. Choose a $\delta>0$ such that $c(\delta)<\varepsilon \E|X_1|/ \sigma^2$. The Borel set $N \cap N_\delta$ has full measure $\lambda|_{\supp \pi}$ and hence is dense in $\supp \pi$. Therefore we can pick a $y \in N \cap N_\delta$ that satisfies $|x-y| \le \delta$. Then inequality~\eqref{eq: bound} follows from \eqref{eq: estimate g} and \eqref{eq: estimate f}, as required. 
\end{proof}

\subsection{Expected occupation times between level-crossings}
In the rest of the section we present several identities for occupation times, which are direct corollaries of Proposition~\ref{prop: Kac MC} on general Markov chains. 

Define the first up- and down-crossing times of the zero level as 
\[
 T:= \inf\{k\ge 1: S_{k-1} < 0, S_k \ge 0 \}  \quad \text{and} \quad
 T^\downarrow:= \inf\{k\ge 1: S_{k-1} \ge 0, S_k < 0 \}.
\]
Recall that $\mathcal{T}_1 = \min\{T, T^\downarrow\}$, $\pi_+=c_1 \lambda_{[0, \infty)}^{entr}$ and $\pi_- = c_1 \lambda_{(- \infty,0)}^{entr} $ (see~\eqref{eq: inv quadrant}).

\begin{proposition} \label{prop: reconstruct}
For any random walk $S$ that oscillates, for any Borel set $B \subset \ZZ$ we have
\begin{equation*} 
c_1 \lambda(B) = \E_{\pi_+} \!\!\left [ \sum_{k=0}^{T-1} \I(S_k \in B) \right ] = \E_{\pi_-} \!\!\left [ \sum_{k=0}^{T^\downarrow-1} \I(S_k \in B) \right ] = 2 \E_{\pi} \!\left [ \sum_{k=0}^{\mathcal{T}_1-1} \I(S_k \in B) \right] \!.
\end{equation*}
\end{proposition}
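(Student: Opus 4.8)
The plan is to read all three identities off the occupation-time (Kac) formula of Proposition~\ref{prop: Kac MC}(2), applied to the walk $S$ with its invariant Haar measure $\lambda$. For the first identity I take $Y=S$, $\mu=\lambda$, and $A=[0,\infty)$, so that $A^c=(-\infty,0)$, the entrance measure is $\mu_A^{entr}=\lambda_{[0,\infty)}^{entr}$, and the entrance time $T_1^{\to A}=\inf\{k\ge1:S_{k-1}<0,\ S_k\ge0\}$ is exactly the first up-crossing time $T$. Evaluating~\eqref{eq: Kac MC 2} on the one-dimensional cylinder $E=C_B$ turns the left-hand side into $\P_\lambda(S_0\in B)=\lambda(B)$ and the summand into $\I(S_k\in B)$, giving $\lambda(B)=\E_{\lambda_{[0,\infty)}^{entr}}\bigl[\sum_{k=0}^{T-1}\I(S_k\in B)\bigr]$; multiplying by $c_1$ and recalling $\pi_+=c_1\lambda_{[0,\infty)}^{entr}$ yields the first equality. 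To license the proposition I must verify its hypotheses under mere oscillation: $\P_\lambda(\tau_{[0,\infty)}'(S)=\infty)=\P_\lambda(\tau_{(-\infty,0)}'(S)=\infty)=0$ both hold because an oscillating walk a.s.\ exceeds and a.s.\ falls below any level, and the remaining condition is either Part~2a (when $S$ is topologically recurrent) or Part~2b applied to the dual chain $\hat S=-S$, which also oscillates.

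The second identity is the mirror image: running the same computation with $A=(-\infty,0)$ (equivalently, applying the first identity to the walk $-S$) replaces $T$ by the first down-crossing time $T^\downarrow=T_1^{\to(-\infty,0)}$ and $\pi_+$ by $\pi_-=c_1\lambda_{(-\infty,0)}^{entr}$.

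For the third identity I would pass from the first two by a strong-Markov splitting. Under $\pi_+$ the walk starts at an up-crossing point ($S_0\ge0$), so its very first crossing is a down-crossing; hence $\mathcal{T}_1=T^\downarrow\le T$ a.s.\ under $\P_{\pi_+}$, and the first up-crossing $T$ occurs only after this intermediate down-crossing. Splitting the block $\{0,\dots,T-1\}$ at $T^\downarrow$ and using the strong Markov property there gives
\begin{equation*}
\E_{\pi_+}\Bigl[\sum_{k=0}^{T-1}\I(S_k\in B)\Bigr]
=\E_{\pi_+}\Bigl[\sum_{k=0}^{\mathcal{T}_1-1}\I(S_k\in B)\Bigr]
+\int \E_y\Bigl[\sum_{k=0}^{T-1}\I(S_k\in B)\Bigr]\,\nu(dy),
\end{equation*}
where $\nu:=\P_{\pi_+}(S_{T^\downarrow}\in\cdot)$ is the law of the down-crossing overshoot reached from $\pi_+$; because $\nu$ is carried by $(-\infty,0)$, from each such negative start the first up-crossing coincides with the first crossing $\mathcal{T}_1$, so once I establish $\nu=\pi_-$ the integral becomes $\E_{\pi_-}\bigl[\sum_{k=0}^{\mathcal{T}_1-1}\I(S_k\in B)\bigr]$. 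The crucial point is this identity $\nu=\pi_-$, which I read off the invariance of $\pi=\tfrac12\pi_++\tfrac12\pi_-$ for the overshoot chain $\mathcal{O}$ by restricting the stationarity relation $\pi=\P_\pi(\mathcal{O}_1\in\cdot)$ to $(-\infty,0)$: the $\pi_-$-part of the input up-crosses and contributes nothing there, leaving $\tfrac12\pi_-=\tfrac12\P_{\pi_+}(\mathcal{O}_1\in\cdot)$, i.e.\ $\P_{\pi_+}(S_{T^\downarrow}\in\cdot)=\pi_-$. Feeding the first equality into the left-hand side and using $2\pi=\pi_++\pi_-$ then collapses the right-hand side to $2\E_\pi\bigl[\sum_{k=0}^{\mathcal{T}_1-1}\I(S_k\in B)\bigr]$, which is the third equality.

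The step I expect to cost the most care is this last identity $\P_{\pi_+}(S_{T^\downarrow}\in\cdot)=\pi_-$ together with the strong-Markov disintegration, precisely in the genuinely transient regime $\E|X_1|=\infty$, where $\pi_+,\pi_-,\pi$ are only $\sigma$-finite. There one cannot condition probabilistically, so both the splitting and the restriction of the invariance relation must be phrased measure-theoretically through $\P_{\pi_+}=\int\P_x\,\pi_+(dx)$. The secondary technical point is the verification of the hypotheses of Proposition~\ref{prop: Kac MC}(2) for possibly transient oscillating walks, namely the Part~2b conditions on $\hat S=-S$ linking $A_{en}$ and $A^c_{ex}$; these reduce to the oscillation of $\pm S$ but should be checked explicitly.
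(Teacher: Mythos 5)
Your proof is correct, and for the first two equalities it is exactly the paper's argument: both apply formula \eqref{eq: Kac MC 2} of Proposition~\ref{prop: Kac MC} (under Part 2a or 2b, the hypotheses holding because $S$ and its dual $-S$ oscillate, with entrance positions a.s.\ landing in $A_{en}$) with $\mu=\lambda$, $A=[0,\infty)$ resp.\ $A=(-\infty,0)$, and $E=C_B$. For the third equality you take a genuinely different route. The paper's step is pure bookkeeping: apply the first identity to $B\cap[0,\infty)$ and the second to $B\cap(-\infty,0)$, using the deterministic sign pattern of a crossing cycle --- under $\P_{\pi_+}$ the walk is nonnegative exactly on $\{0,\dots,T^\downarrow-1\}=\{0,\dots,\mathcal{T}_1-1\}$ and negative on $\{T^\downarrow,\dots,T-1\}$, and symmetrically under $\P_{\pi_-}$ --- so that for $B\subset[0,\infty)$ one has $\E_{\pi_+}\bigl[\sum_{k=0}^{T-1}\I(S_k\in B)\bigr]=\E_{\pi_+}\bigl[\sum_{k=0}^{\mathcal{T}_1-1}\I(S_k\in B)\bigr]$ while the corresponding $\pi_-$-expectation over $\{0,\dots,\mathcal T_1-1\}$ vanishes; adding the two halves of $B$ gives $2\E_\pi[\cdots]=c_1\lambda(B)$ with no new distributional input. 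You instead split the $\pi_+$-cycle at $T^\downarrow$ via the strong Markov property and feed in the cross identity $\P_{\pi_+}(S_{T^\downarrow}\in\cdot)=\pi_-$, which you extract from the invariance of $\pi$ for $\mathcal{O}$; note that this invariance (stated in the paper for oscillating walks after \eqref{eq: inv quadrant}) is in fact \emph{equivalent} to your cross identity together with its mirror $\P_{\pi_-}(S_T\in\cdot)=\pi_+$, by decomposing $\P_\pi(\mathcal O_1\in\cdot)$ according to the sign of the start, so you are invoking a result of the same strength rather than something weaker. Your handling of the $\sigma$-finite regime is right: the splitting is Tonelli over $\P_{\pi_+}=\int\P_x\,\pi_+(dx)$ with nonnegative integrands, no conditioning needed. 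If you prefer not to lean on the stated invariance of $\pi$ for $\mathcal O$, your cross identity also drops directly out of the same Kac formula by taking $E=\{x\in\ZZ^{\N_0}: x_0\ge 0,\ x_1\in B\}$ with $B\subset(-\infty,0)$: the left side is $\pi_-(B)/c_1$, and a $\pi_+$-cycle contains exactly one down-crossing, so the right side is $\P_{\pi_+}(S_{T^\downarrow}\in B)/c_1$. In sum: the paper's route is shorter and self-contained; yours costs the extra identity but also yields it as a byproduct --- at stationarity the alternating overshoots genuinely cycle between $\pi_+$ and $\pi_-$ --- which is of independent interest.
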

We have not seen these formulas in the random walks literature with exception of one particular case when $S$ is a symmetric simple random walk. Here $\pi_+=\delta_0$ and the first formula above is described by Feller~\cite[Section XII.2, Example~b]{Feller}, who praised ``the fantastic nature of this result''. In this case, the above formulas are similar to the identity
\[
1 = \E_0 \!\left [ \sum_{k=0}^{\tau'_{\{0\}}(S) - 1} \I(S_k = x) \right ], \qquad x \in \ZZ,
\]
which holds for any lattice recurrent random walk $S$ and corresponds to $A= \{0\}$ and $E = C_{\{x\}}$ in~\eqref{eq: Kac MC 1}. By the same reasoning, we get a version of this result for non-lattice recurrent walks, proved by Ornstein~\cite[Theorem~0.5]{Ornstein}: for any Borel set $A \subset \R$ of positive Lebesgue measure $\lambda$,
\[
\lambda(B) = \int_A \E_x \bigg [ \sum_{k=0}^{\tau'_A(S) - 1} \I(S_k \in B) \bigg ] \lambda(dx), \qquad B \in \mathcal{B}(\R).
\]
We stress again that in Proposition~\ref{prop: reconstruct} we do not assume recurrence of $S$.

Note also that the formulas in Proposition~\ref{prop: reconstruct} are similar to the pre-zero-crossing occupation measure representation for the renewal measure of ladder heights of a random walk starting at zero (Asmussen~\cite[Theorem~VIII.2.3b]{Asmussen}), but in our case the walk starts differently.  

\begin{proof}
The Markov chain $S$ on $\ZZ$, its invariant measure $c_1 \lambda$, and the set $\ZZ \cap [0, \infty)$ satisfy the assumptions of Part 2b of Proposition~\ref{prop: Kac MC}; see Section~\ref{Sec: Application to RWs}. Then the first equality follows from  the fact that $\pi_+=c_1 \lambda^{entr}_{ [0, \infty)} $ and formula \eqref{eq: Kac MC 2} applied to $E = C_B$. The second equality is analogous. The third one follows by applying the first two to the sets $B \cap  [0, \infty)$ and $B \cap (-\infty,0)$.
\end{proof}

Further, for the {\it number of up-crossings} of arbitrary level $a$ by time $n \ge 1$, defined as
\[
L_n^\uparrow(a):=\sum_{i=0}^{n-1} \I(S_i <a, S_{i+1} \ge a), \qquad a \in \ZZ,
\]
we obtain the following surprising result.

\begin{proposition} \label{prop: E up-crossings}
For any non-degenerate random walk $S$ satisfying $\E X_1=0$, we have $\E_{\pi_+} L_T^\uparrow(a) = 1$ and $\E_{\pi_-} L_T^\uparrow(a) = 1$ for any $a \in \ZZ$.
\end{proposition}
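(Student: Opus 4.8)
The plan is to read off both identities from the occupation-time (Kac) formula \eqref{eq: Kac MC 2} of Proposition~\ref{prop: Kac MC}, applied to the walk exactly as in the proof of Proposition~\ref{prop: reconstruct}, but with a two-dimensional test event tailored to count up-crossings rather than occupation times. First I would take $Y=S$, $\mu=c_1\lambda$ and $A=\ZZ\cap[0,\infty)$. Since $S$ oscillates, both $\tau'_{[0,\infty)}(S)$ and $\tau'_{(-\infty,0)}(S)$ are $\P_x$-a.s.\ finite for every $x$, and the dual walk $-S$ oscillates as well; hence the hypotheses of Part~2b of Proposition~\ref{prop: Kac MC} hold (this is precisely the verification already invoked in the proof of Proposition~\ref{prop: reconstruct}, see Section~\ref{Sec: Application to RWs}). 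With this choice one has $\mu^{entr}_A=c_1\lambda^{entr}_{[0,\infty)}=\pi_+$ and $T^{\to A}_1=T$.

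The conceptual step is then to choose the right event. I would apply \eqref{eq: Kac MC 2} to $E:=\{x\in\ZZ^{\N_0}:x_0<a\le x_1\}$, so that the summand $\I\bigl((S_k,S_{k+1},\ldots)\in E\bigr)$ equals $\I(S_k<a,\,S_{k+1}\ge a)$ and the sum in \eqref{eq: Kac MC 2} becomes exactly $\sum_{k=0}^{T-1}\I(S_k<a,\,S_{k+1}\ge a)=L_T^\uparrow(a)$. The formula therefore collapses to
$$
\E_{\pi_+}L_T^\uparrow(a)=\P_{c_1\lambda}(S_0<a,\,S_1\ge a).
$$

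It remains to evaluate the right-hand side. Writing it as $c_1\int_\ZZ \I(x<a)\,\P(X_1\ge a-x)\,\lambda(dx)$, substituting $u=a-x$ and using translation invariance of the Haar measure gives $c_1\int_{(0,\infty)}\P(X_1\ge u)\,\lambda(du)=c_1\,\E X_1^+$. Because $\E X_1=0$ forces $\E|X_1|<\infty$ and $\E X_1^+=\E X_1^-=\frac{1}{2}\E|X_1|$, while $c_1=2/\E|X_1|$, the value is $1$, proving the first identity. The second is obtained verbatim from \eqref{eq: Kac MC 2} applied to the lower half-line $A=\ZZ\cap(-\infty,0)$, for which $\mu^{entr}_A=\pi_-$ and the first entrance time is the down-crossing time $T^\downarrow$; the same computation again yields $1$.

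I expect the only genuinely delicate points to be bookkeeping rather than conceptual. The identity $\int_{(0,\infty)}\P(X_1\ge u)\,\lambda(du)=\E X_1^+$ must be recorded in both regimes: in the non-lattice case $\lambda$ is Lebesgue measure and this is the layer-cake formula, while in the lattice case $\ZZ=h\Z$ one has $\lambda(\{kh\})=h$ and the integral equals $h\sum_{m\ge1}\P(X_1\ge mh)=\E X_1^+$ by summation by parts. The main (still mild) obstacle is the second one, namely confirming the applicability of Proposition~\ref{prop: Kac MC}, Part~2b, for a \emph{possibly transient} oscillating walk: this reduces to finiteness of the four half-line hitting times for $S$ and for its dual $-S$, which follows from the oscillation of both and is already established in Section~\ref{Sec: Application to RWs}.
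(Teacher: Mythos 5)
Your proposal is correct and takes essentially the same route as the paper: the paper's proof likewise applies \eqref{eq: Kac MC 2} with $E=\{x \in \ZZ^{\N_0}: x_0 < a, x_1 \ge a\}$ and $A=\ZZ\cap[0,\infty)$, the only cosmetic difference being that it evaluates the left-hand side via shift-invariance, $\P_{\lambda}(S\in E_a)=\P_{\lambda}(S\in E_0)=c_1^{-1}$ (the normalization of $\pi_+$), whereas you compute $c_1\E X_1^+=1$ directly by the layer-cake formula --- the same calculation in disguise, since your substitution $u=a-x$ is exactly the translation invariance of $\lambda$. Your reading of the second identity, with the first entrance time under $\pi_-$ being the down-crossing time $T^\downarrow$, is precisely what the paper's ``analogous'' argument produces, so this matches the intended statement.
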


Thus, the expected number of up-crossings by the  time $T$ does not depend on the level (if $S$ is started under $\pi_+$ or $\pi_-$, i.e.\ at stationarity of either chain $O$ or $O^\downarrow$), and therefore equals~$1$ since $L_T^\uparrow(0)=1$ by the definition of $T$.

\begin{proof}
For the first equality, take $E=\{x \in \ZZ^{\N_0}: x_0 < a, x_1 \ge a \}$ and $A= \ZZ \cap [0, \infty)$ in \eqref{eq: Kac MC 2}, and use the facts that $\pi_+=c_1 \lambda^{entr}_{[0, \infty)} $ and $\P_{\lambda}(S \in E_a)=\P_{\lambda}(S \in E_0) =c_1^{-1}$, where the first equality follows by shift-invariance of both the measure $\lambda$ and the transition kernel of $S$. The second equality is analogous.
\end{proof}

From the idea that it is more natural to start the random walk from $0$ rather than under $\pi_+$, we can use Proposition~\ref{prop: E up-crossings} to find $\E_0 L_T^\uparrow(a)$ for two specific types of distributions of increments. We say that $X_1$ has {\it upward exponential distribution} if the conditional distribution $\P(X_1> \cdot | X_1>0)$ is exponential. For every distribution of this type we have $\pi_+(\cdot) = \P(X_1 \in \cdot | X_1>0)$, which by Proposition~\ref{prop: E up-crossings} and the memoryless property of exponential distributions easily implies (we omit the computations) that
\[
\E_0 L_T^\uparrow(a) = \frac{\P(X_1 >0)}{\P(X_1 \neq 0)} + \P(X_1 \ge a | X_1 >0) , \qquad a > 0.
\]
We say that $X_1$ has {\it upward skip-free distribution} if $\P(X_1 \in \{1, 0, -1, \ldots \}) = 1$. If the random walk $S$ has such increments, then $\pi_+=\delta_0$ and thus $\E_0 L_T^\uparrow(a) =1$ for every real $a$.

The main application of random walks with upward exponential distributions is in queuing theory, where they feature in the Lindley formula for the waiting times in GI/M/1 queues with exponential service times; see Asmussen~\cite[Section~III.6]{Asmussen}. The main application of random walks with skip-free distributions is in theory of branching processes; they also appear in queuing theory~\cite[Section~III.6]{Asmussen}.

\section*{Acknowledgements}
We thank Wolfgang Woess for discussions and Vadim Kaimanovich for providing a reference to his extremely useful paper. We thank the anonymous referees for their comments and suggestions.

\appendix
\section{Induced transformations in infinite ergodic theory} 

Here we present basic results on inducing for measure preserving transformations of infinite measure spaces; see Aaronson~\cite[Chapter 1]{Aaronson} for the introduction to infinite ergodic theory. We present few variations of Kakutani's classical results of 1943, mainly to cover inducing on sets of infinite measure. To our surprise, we did not find references for the exact statements we need, and we prove them here.

Let $T$ be a transformation of some measurable space $(X, \mathcal{F})$. For any set $A \in \mathcal{F}$, consider the {\it first hitting time} $\tau_A$ of $A$ and the {\it first hitting mapping} $\varphi_A$ defined by
\begin{equation} \label{eq: tau phi def}
\tau_A(x):=\inf\{n \ge 1: T^n x \in A\}, \,  x \in X \quad \text{and} \quad \varphi_A(x):=T^{\tau_A(x)} x, \, x \in \{\tau_A < \infty\}.
\end{equation}
and the {\it first return} or {\it induced mapping} $T_A:={(\varphi_A)|}_A$ defined on $A \cap \{\tau_A < \infty\}$. Put also
$\tilde \tau_A(x):=\inf\{n \ge 0: T^n x \in A\}$ and $\tilde \varphi_A(x):=T^{\tilde \tau_A(x)} x$. All these mappings are measurable.

From now on 
we assume that $m$ is a measure on $(X, \mathcal{F})$ and the transformation $T$ is measure preserving on $(X, \mathcal{F}, m)$. We say that a set $A \in \mathcal F$ is {\it recurrent} for $T$ if $\tau_A$ is finite $m$-a.e.\ on $A$, that is $A \subset \cup_{k \ge 1} T^{-k} A \Mod{m}$, where$\Mod{m}$ means true possibly except for a $m$-zero set. If $A$ is recurrent for $T$, then  from invariance of $m$ it follows by simple induction that all iterations of the mapping  $\varphi_A$ are defined $m$-a.e.\ on~$A$ (see~\cite[Section~1.5]{Aaronson}), that is
\begin{equation} 
\label{eq: io}
{m|}_A(\tau_A = \infty)=0 \quad \Longrightarrow \quad {m|}_A(\{T^k \in A \text{ i.o.}\}^c)=0,
\end{equation}
where ``i.o.'' stands for ``infinitely often''.

The following result on induced transformations essentially is in~\cite[Proposition~1.5.3]{Aaronson}.

\begin{taggedlemma}{\ref*{lem: induced}$'$} \label{lem: induced'}
Let $T$ be a measure preserving transformation of a measure space $(X, \mathcal{F}, m)$, and $A \in \mathcal{F}$ be any set recurrent for $T$ such that $0<m(A)<\infty$. Then the induced mapping $T_A$ is a  measure preserving transformation of the induced space $(A, \mathcal{F} \cap A, {m|}_A)$.
\end{taggedlemma}

Relaxing the condition $m(A)<\infty$ requires additional assumptions described in the next two statements.

\begin{lemma} \label{lem: induced}
Let $T$ be an invertible measure preserving transformation of a $\sigma$-finite measure space $(X, \mathcal{F}, m)$. Let $A \in \mathcal{F}$ be any set such that $m(A)>0$, $\{T^k A\}_{k \ge 1} \subset \mathcal{F}$, $A \subset \cup_{k \ge 1} T^k A \Mod{m}$, and  $A$ is recurrent for $T$. Then the induced mapping $T_A$ is a  measure preserving transformation of the induced space $(A, \mathcal{F} \cap A, {m|}_A)$.
\end{lemma}

\begin{remark} \label{rem: measurable inverse} 
a) If the mapping $T$ is invertible and $(X, \mathcal{F})$ is a {\it standard} measurable space, that is $X$ is a Polish space and $\mathcal F =\mathcal{B}(X)$ is its Borel $\sigma$-algebra, then the mapping $T^{-1}$ is always measurable (and hence $\{T^k A\}_{k \ge 1} \subset \mathcal F$); see~\cite[Theorem~1.0.3]{Aaronson}. In this case, the condition $A \subset \cup_{k \ge 1} T^k A \Mod{m}$ means recurrence of $A$ for the mapping $T^{-1}$, which preserves~$m$.

b) If $T^{-1}$ is measurable (and hence measure preserving) and $m(A)< \infty$, then $A$ is recurrent for $T$ if and only if $A$ is recurrent for $T^{-1}$, that is the assumptions $A \subset \cup_{k \ge 1} T^{-k} A \Mod{m}$ and $A \subset \cup_{k \ge 1} T^k A \Mod{m}$ are equivalent; see Kaimanovich~\cite[Proposition~1.3]{Kaimanovich}. 

The following example shows what can go wrong if we impose only the former of the two assumptions for a set $A$ of infinite measure: if $T$ is the shift on $\Z$ equipped with the counting measure and $A=\N$, then $T_A$ is not measure preserving on $A$ since $T_A^{-1}(\{1\}) = \varnothing$. In this example $T_A$ is not surjective$\Mod{m}$ but it is so if we require that $A \subset \cup_{k \ge 1} T^k A \Mod{m}$.

c) The assumption of $\sigma$-finiteness of $m$ can be relaxed to $\sigma$-finiteness of ${m|}_A$. The same is valid for Lemma~\ref{lem: induced2} below. Both claims can be verified easily by examining the proofs.
\end{remark}

\begin{proof}[{\bf Proofs of Lemmas~\ref{lem: induced} and~\ref{lem: induced'}}]
We need to show that $m(T_A^{-1} B) = m(B)$ for any measurable set $B \subset A$. By monotonicity of $m$, it suffices to prove this only for $B$ of finite measure since $m|_A$ is $\sigma$-finite. The rest is a standard argument (see the proof of \cite[Proposition~1.5.3]{Aaronson}), which we present for convenience of the reader.
Since the set $A$ is recurrent for $T$, we have 
\[
m(T^{-1}_A B) = \sum_{n=1}^\infty m(A \cap \{\tau_A = n \} \cap T^{-n} B) = \sum_{n=1}^\infty m(A \cap T^{-n} B \setminus \cup_{k=1}^{n-1} T^{-k} A) = \sum_{n=1}^\infty m(A \cap T^{-1} B_{n-1}),
\]
where $B_0:=B$ and $B_n := T^{-n} B \setminus \cup_{k=0}^{n-1} T^{-k} A$ for $n \ge 1$. The set $T^{-1} B_n$ of finite measure is a disjoint union of $A \cap T^{-1} B_n$ and $B_{n+1}$, hence $m( A \cap T^{-1} B_n) = m(B_n) - m(B_{n+1})$. Then
\begin{equation*} 
m(T^{-1}_A B)  = \sum_{n=1}^\infty m(A \cap T^{-1} B_{n-1}) = \sum_{n=1}^\infty \bigl( m(B_{n-1}) - m(B_n) \bigr) = m(B) - \lim_{n \to \infty} m(B_n),
\end{equation*}
and this gives (only under recurrence of the set $A$ for $T$ !) $m(T^{-1}_A B) \le m(B)$ and also
\begin{equation} \label{eq: lim B_n}
m(T^{-1}_A B) = m(B) \quad \Longleftrightarrow \quad \lim_{n \to \infty} m(B_n) =0.
\end{equation} 

Under the assumptions of Lemma~\ref{lem: induced'}, that is $m(A)<\infty$, we also have $m(T^{-1}_A (A \setminus B)) \le m(A \setminus B)$ since in the inequality $m(T^{-1}_A B) \le m(B)$ the set $B$ can be any measurable subset of the set $A$ of finite measure. Then
\[
m(T^{-1}_A A) - m(T^{-1}_A B ) = m(A) -  m(T^{-1}_A B ) \le m(A) - m(B)
\]
since $T^{-1}_A A = A \Mod{m}$ by recurrence of $A$. Thus $m(T^{-1}_A B ) \ge m(B)$, and $m(B)=m(T^{-1}_A B )$.


Under the assumptions of Lemma~\ref{lem: induced}, by invertibility of $T$ we have
\begin{equation} \label{eq: invertible vanish1}
B_n=T^{-n} B \setminus \cup_{k=0}^{n-1} T^{-k} A = T^{-n} B \setminus [T^{-n} (\cup_{k=1}^n T^k A)] = T^{-n} (B \setminus (\cup_{k=1}^n T^k A)).
\end{equation}
Hence 
\begin{equation} \label{eq: invertible vanish2}
\lim_{n \to \infty} m(B_n) = \lim_{n \to \infty}  m(B \setminus (\cup_{k=1}^n T^k A)) = m(B \setminus (\cup_{k \ge 1} T^k A)) =0,
\end{equation}
and so $m(B)=m(T^{-1}_A B )$ follows from \eqref{eq: lim B_n}.
\end{proof}

We say that the transformation $T$ is {\it ergodic} if its invariant $\sigma$-algebra $\mathcal{I}_T :=\{A \in \mathcal{F}: T^{-1} A = A \Mod{m} \} $ is $m$-trivial, i.e.\ for every $A \in \mathcal{I}_T$ either $m(A)=0$ or $m(A^c)=0$.

We say that $T$ is {\it conservative} if every measurable subset of $X$ is recurrent for $T$. By Poincar{\'e}'s recurrence theorem, $T$ is conservative if $\mu(X)<\infty$.

\begin{crit}
A measure preserving transformation $T$ of a $\sigma$-finite measure space $(X, \mathcal{F}, m)$ is conservative iff there exists a sequence of sets $\{ A_k\}_{k \ge 1} \subset \mathcal F$, all of finite measure and  recurrent for $T$, such that $X= \cup_{k \ge 1} A_k \Mod{m}$. In particular, this holds if $X = \cup_{k \ge 1} T^{-k} A \Mod{m}$, i.e.\ $\tau_A < \infty$ $m$-a.e., for some measurable set $A$ of finite measure. 
\end{crit}

The second assertion is known as {\bf Maharam's  recurrence theorem}. 

\begin{proof}
The direct implication in the first assertion is trivial. For the reverse one,  assume that there is a set $A \in \mathcal F$ of positive measure that is not recurrent for $T$. Then so is $A': = A \setminus \cup_{n=1}^\infty T^{-n} A$. Pick a  $k \ge 1$ such that $m(A_k \cap A')>0$. By Lemma~\ref{lem: induced'}, the induced mapping $T_{A_k}$ is measure preserving on the induced space $(A_k, \mathcal F  \cap A_k, {m|}_{A_k})$ of finite measure. This mapping is conservative by Poincar{\'e}'s recurrence theorem, hence $A_k \cap A'$ is a recurrent set for $T_{A_k}$, hence it is recurrent for $T$, which is a contradiction.
\end{proof}


Next we give a version of Lemma~\ref{lem: induced} for conservative transformations. The additional statement on ergodicity is in~\cite[Propositions~1.2.2 and~1.5.2]{Aaronson}. 

\begin{lemma} \label{lem: induced2}
Let $T$ be a measure preserving conservative transformation of a $\sigma$-finite measure space $(X, \mathcal{F}, m)$, and $A \in \mathcal{F}$ be any set with $m(A)>0$. Then $T_A$ is a measure preserving conservative transformation of the induced space $(A, \mathcal{F} \cap A, {m|}_A)$. Moreover, if $T$ is ergodic, then $T_A$ is ergodic and $X= \cup_{k \ge 1} T^{-k} A \Mod{m}$.
\end{lemma}

\begin{proof}
First of all, $T_A$ is well defined since $A$ is recurrent for $T$ by the conservativity. The latter property of $T$ trivially implies conservativity of $T_A$; see~\cite[Proposition~1.5.1]{Aaronson}.
Since $m$ is $\sigma$-finite, by its monotonicity it suffices to check that $m(T_A^{-1} B) = m(B)$ for any measurable set $B \subset A$ of finite positive measure. But $T_B^{-1} B = B \Mod{m}$ by conservativity of $T$, so $m(T_B^{-1} B) = m(B)$ and hence  $\lim_{n \to \infty} m(T^{-n} B \setminus \cup_{k=0}^{n-1} T^{-k} B) =0$ by \eqref{eq: lim B_n}. Since $B \subset A$, this gives $\lim_{n \to \infty} m(T^{-n} B \setminus \cup_{k=0}^{n-1} T^{-k} A) =0$, which by \eqref{eq: lim B_n} implies $m(T_A^{-1} B) = m(B)$.
\end{proof}

For invertible $T$, inducing can be reversed under additional assumption $X = \cup_{k \ge 1} T^{-k} A$ $ \Mod{m}$ using so-called {\it suspensions} (Kakutani towers). More generally, certain invariant measures of the induced transformation can be lifted to invariant measures of the original transformation, as follows. Denote $m_A:=m|_A$.

\begin{lemma} \label{lem: induced3}
Let $T$ be a measure preserving transformation of a $\sigma$-finite measure space $(X, \mathcal{F}, m)$, and let $A \in \mathcal{F}$ be any set recurrent for $T$ such that $m(A)>0$. Then for any $\sigma$-finite $T_A$-invariant measure $\nu$ on $(A, \mathcal F \cap A)$ such that $\nu \ll m_A$, the measure
\begin{equation} \label{eq: Kac}
\bar \nu(B):= \int_A \left [ \sum_{k=0}^{\tau_A(x) - 1} \I(T^k x \in B ) \right ]\! \nu(dx), \qquad B \in \mathcal{F},
\end{equation}
is invariant for $T$ and satisfies ${\bar \nu|}_A = \nu $. Moreover, if $X = \cup_{k \ge 1} T^{-k} A \Mod{m}$ and the assumptions of either Lemma~\ref{lem: induced},\ref{lem: induced'}, or~\ref{lem: induced2} are satisfied, then $\overbar{m_A} =m$. 
\end{lemma}
The assumption $\nu \ll m_A$ is imposed to ensure that $\tau_A$ is finite $\nu$-a.e.\ on $A$. In the case of  conservative $T$, equation~\eqref{eq: Kac} with $B=X$ is known as Kac's formula. 
\begin{proof}
The equality $\bar \nu|_A = \nu $ is trivial. Invariance of $\bar \nu$ is standard; see the proof of~\cite[Proposition~1.5.7]{Aaronson} or a similar argument in \eqref{eq: lifted invariant} below. 

It remains to prove the last assertion. By monotonicity and $\sigma$-finiteness of $m$, this can be checked only on sets of finite measure $m$. For any measurable set $B \subset X$,  
\begin{align} \label{eq: Kac proof 1}
\overbar{m_A}(B) =& \int_A  \left [ \sum_{n=1}^\infty \I(\tau_A(x) = n) \times \sum_{k=0}^{\tau_A(x) - 1} \I(T^k x \in B) \right ] \! m(dx) \notag \\
=& \int_A \left [ \sum_{n=1}^\infty \sum_{k=0}^{n - 1} \I(T^k x \in B, \tau_A(x) = n ) \right ] \! m(dx) \notag \\ 
=& \sum_{k=0}^\infty m (A \cap T^{-k} B \cap \{\tau_A > k\}),
\end{align}
and therefore, assuming that $m(B)<\infty$, we get
\[
\overbar{m_A}(B) = \sum_{k=0}^\infty m (A \cap T^{-k} B \setminus \cup_{n=1}^k T^{-n} A) = m(A \cap B) + \sum_{k=1}^\infty m(A \cap T^{-1} B_{k-1}'),
\]
where $B_k':= T^{-k} B \setminus \cup_{n=0}^k T^{-n} A$ for $k \ge 0$. The set $T^{-1} B_k'$ has finite measure and it is a disjoint union of $A \cap T^{-1} B_k'$ and $B_{k+1}'$, hence  $m( A \cap T^{-1} B_k') = m(B_k') - m(B_{k+1}')$. Then the sequence $m(B_k')$ is decreasing, and 
\begin{equation} \label{eq: lifted =}
\overbar{m_A}(B)= m(A \cap B) + m(B_0') - \lim_{k \to \infty} m(B_k') = m(B) -  \lim_{k \to \infty} m(B_k').
\end{equation}
It remains to show that the limit in the above formula is zero. 

Let the assumptions of Lemma~\ref{lem: induced} be satisfied. Then  the mapping $T_A$ is invertible$\Mod m$. Indeed, $T_A$ is surjective$\Mod m$ by the assumption $A \subset \cup_{k \ge 1} T^k A \Mod{m}$ of  Lemma~\ref{lem: induced}. To prove injectivity of $T_A$, assume that $T_A x_1 = T_A x_1$ for some $x_1, x_2 \in A$. Then $T^{\tau_A(x_1)} x_1= T^{\tau_A(x_2)} x_2$. W.l.o.g., assume $\tau_A(x_1) \ge \tau_A(x_2)$. Then  $T^{(\tau_A(x_1)-\tau_A(x_2))} x_1 = x_2$, which is, by definition of the first hitting time $\tau_A$ of $A$,  possible only if $\tau_A(x_1)=\tau_A(x_2)$. Hence $x_1=x_2$.
 
Furthermore, we claim that $X = \cup_{k \ge 1} T^k A \Mod{m}$. Then equality $\lim_{k \to \infty} m(B_k')=0$ follows exactly as $\lim_{n \to \infty} m(B_n)=0$ followed above from equalities \eqref{eq: invertible vanish1} and \eqref{eq: invertible vanish2}, and we get $\overbar{m_A}(B) = m(B)$ by \eqref{eq: lifted =}, as required. To prove the claim, put 
\[
V(x):=T_A^{-1}(\tilde \varphi_A(x)), \qquad x \in \{\tilde \tau_A < \infty\} \cap \tilde \varphi_A^{-1}(T_A(A)).
\]
The mapping $V$ is defined for $m$-a.e.\ $x \in X$ because of the following: $T_A$ on $A$ is invertible$\Mod m$; $\tilde \tau_A$ is finite$\Mod m$ since $X = \cup_{k \ge 1} T^{-k} A \Mod{m}$ by  assumption of Lemma~\ref{lem: induced3}; and 
\[
m\big( \{\tilde \tau_A < \infty\} \setminus \tilde \varphi_A^{-1}(T_A(A)) \big) = m \big( \tilde \varphi_A^{-1}(A \setminus T_A(A)) \big) \le m \big(  \cup_{k=0}^\infty T^{-k} (A \setminus T_A(A)) \big) =0.
\]
Then for $m$-a.e.\ $x$, we have $T^{k(x)} (V(x))=x$ for a positive integer $k(x)$ satisfying $k(x)=\tau_A(V(x))-\tilde \tau_A(x) $. Thus, equality $X = \cup_{k \ge 1} T^k A \Mod{m}$ holds, as claimed.

Let now the assumptions of either Lemma~\ref{lem: induced'} or~\ref{lem: induced2} be satisfied. Then the transformation $T$ is conservative. In the latter case this is by assumption. In the former case, this follows by Maharam's recurrence theorem since $m(A)<\infty$ and $X = \cup_{k \ge 1} T^{-k} A \Mod{m}$ by  assumption of Lemma~\ref{lem: induced3}. When $m(A)<\infty$, the equality $\overbar{m_A} = m$ holds by \cite[Lemma~1.5.4]{Aaronson}. The following argument covers both cases of finite and infinite $m(A)$. 

For any integer $N \ge 1$, denote $B^{(N)}:=B \cap (\cup_{n=1}^N T^{-n} A)$. Notice that for any $k \ge N$, we have $\{ \tau_{B^{(N)}} \le k-N \} \subset \{ \tau_A \le k \}$, hence
\[
T^{-k} (B^{(N)}) \setminus \cup_{n=0}^k T^{-n} A \subset \{ k-N <\tau_{B^{(N)}} \le k\}, \qquad k \ge N.
\]
Then for $k \ge N$,
\begin{align*}
m(B_k')&=m \bigl(T^{-k} (B \setminus \cup_{n=1}^N T^{-n} A) \setminus \cup_{n=1}^k T^{-n} A \bigr) +m\bigl(T^{-k} (B^{(N)}) \setminus \cup_{n=1}^k T^{-n} A \bigr) \\
&\le m \bigl(T^{-k} (B \setminus \cup_{n=1}^N T^{-n} A) \bigr) + N \sup_{n > k-N} m(\tau_{B^{(N)}} = n) \\
&= m (B \setminus \cup_{n=1}^N T^{-n} A ) + N \sup_{n > k-N} m (T^{-n}(B^{(N)}) \setminus \cup_{i=1}^{n-1} T^{-i} B^{(N)}).
\end{align*}
The first term in the last line can be made as small as necessary by choosing $N$ to be large enough, and the second term vanishes as $k \to \infty$ for any fixed $N$ by equivalence \eqref{eq: lim B_n} applied with $B^{(N)}$ substituted for $A$ and $B$. Such application of \eqref{eq: lim B_n} is possible since  the set $B^{(N)}$ is recurrent for $T$ because $T$ is conservative, as explained above, and $m(B^{(N)}) \le m(B) < \infty$. Thus, $\lim_{k \to \infty} m(B_k')=0$, and by \eqref{eq: lifted =}, this yields the required equality $\overbar{m_A}(B)= m(B)$.
\end{proof}

Finally, we recall the following classical result; see Zweim\"uller~\cite{Zweimuller}.
\begin{Hopf}
Let $T$ be a conservative ergodic measure preserving transformation of a $\sigma$-finite measure space $(X, \mathcal{F}, m)$. Then for any functions $f, g \in L^1(X, \mathcal{F}, m)$ with non-zero $g \ge 0$, 
\begin{equation*} 
\lim_{n \to \infty} \frac{\sum_{k=0}^{n-1} f \circ T^k}{\sum_{k=0}^{n-1} g \circ T^k}  = \frac{\int_X f d m}{\int_X g d m}, \quad m\text{-a.e.} 
\end{equation*}
\end{Hopf}

\bibliographystyle{plain}
\bibliography{overshoot}

\begin{thebibliography}{10}

\bibitem{Aaronson}
Jon Aaronson.
\newblock {\em An introduction to infinite ergodic theory}.
\newblock American Mathematical Society, Providence, RI, 1997.

\bibitem{Asmussen}
S{\o}ren Asmussen.
\newblock {\em Applied probability and queues}.
\newblock Springer-Verlag, New York, second edition, 2003.

\bibitem{Baxter}
Glen Baxter.
\newblock A two-dimensional operator identity with application to the change of
  sign in sums of random variables.
\newblock {\em Trans. Amer. Math. Soc.}, 96:210--221, 1960.

\bibitem{BendikovCygan1}
Alexander Bendikov and Wojciech Cygan.
\newblock {$\alpha$}-stable random walk has massive thorns.
\newblock {\em Colloq. Math.}, 138:105--130, 2015.

\bibitem{BendikovCygan2}
Alexander Bendikov and Wojciech Cygan.
\newblock On massive sets for subordinated random walks.
\newblock {\em Math. Nachr.}, 288:841--853, 2015.

\bibitem{Bogachev}
V.~I. Bogachev.
\newblock {\em Measure theory. {V}ol. {II}}.
\newblock Springer-Verlag, Berlin, 2007.

\bibitem{BorodinIbragimov}
A.~N. Borodin and I.~A. Ibragimov.
\newblock Limit theorems for functionals of random walks.
\newblock {\em Proc. Steklov Inst. Math.}, 195, 1995.

\bibitem{Borovkov}
A.A. Borovkov.
\newblock A limit distribution for an oscillating random walk.
\newblock In {\em Summary of reports presented at sessions of the seminar on
  probability theory and mathematical statistics at the {M}athematical
  {I}nstitute of the {S}iberian {S}ection of the {USSR} {A}cademy of
  {S}ciences, 1979}, volume~25 of {\em Theory Probab. Appl.}, pages 649--657.
  1981.

\bibitem{Bucy}
R.~S. Bucy.
\newblock Recurrent sets.
\newblock {\em Ann. Math. Statist.}, 36:535--545, 1965.

\bibitem{Carlsson}
Niclas Carlsson.
\newblock Some notes on topological recurrence.
\newblock {\em Electron. Comm. Probab.}, 10:82--93, 2005.

\bibitem{ChungFuchs}
K.~L. Chung and W.~H.~J. Fuchs.
\newblock On the distribution of values of sums of random variables.
\newblock {\em Mem. Amer. Math. Soc.}, No. 6:12, 1951.

\bibitem{ChungLindvall}
Kai~Lai Chung and Torgny Lindvall.
\newblock On recurrence of a random walk in the plane.
\newblock {\em Proc. Amer. Math. Soc.}, 78:285--287, 1980.

\bibitem{CsorgoRevesz}
M.~Cs\"org\H~o and P.~R\'ev\'esz.
\newblock On strong invariance for local time of partial sums.
\newblock {\em Stochastic Process. Appl.}, 20:59--84, 1985.

\bibitem{Nonstandard}
Nigel~J. Cutland.
\newblock Nonstandard real analysis.
\newblock In {\em Nonstandard analysis ({E}dinburgh, 1996)}, volume 493, pages
  51--76. Kluwer Acad. Publ., Dordrecht, 1997.

\bibitem{Doney1965}
R.~A. Doney.
\newblock Recurrent and transient sets for {$3$}-dimensional random walks.
\newblock {\em Z. Wahrscheinlichkeitstheorie und Verw. Gebiete}, 4:253--259,
  1965.

\bibitem{Doob}
J.~L. Doob.
\newblock {\em Stochastic processes}.
\newblock John Wiley \& Sons, Inc., New York; Chapman \& Hall, Limited, London,
  1953.

\bibitem{Feller}
William Feller.
\newblock {\em An introduction to probability theory and its applications.
  {V}ol. {II}}.
\newblock Second edition. John Wiley \& Sons, Inc., New York-London-Sydney,
  1971.

\bibitem{Foguel}
Shaul~R. Foguel.
\newblock {\em The ergodic theory of {M}arkov processes}.
\newblock Van Nostrand Reinhold Co., New York-Toronto, Ont.-London, 1969.

\bibitem{GPV}
Nina Gantert, Serguei Popov, and Marina Vachkovskaia.
\newblock On the range of a two-dimensional conditioned simple random walk.
\newblock {\em Annales Henri Lebesgue}.
\newblock To appear.

\bibitem{GreenwoodShaked}
Priscilla Greenwood and Moshe Shaked.
\newblock Fluctuations of random walk in {$R^{d}$} and storage systems.
\newblock {\em Advances in Appl. Probability}, 9:566--587, 1977.

\bibitem{French}
Yves Guivarc'h, Michael Keane, and Bernard Roynette.
\newblock {\em Marches al\'eatoires sur les groupes de {L}ie}.
\newblock Springer-Verlag, Berlin-New York, 1977.

\bibitem{ItoMcKean}
Kiyosi It\^o and H.~P. McKean, Jr.
\newblock Potentials and the random walk.
\newblock {\em Illinois J. Math.}, 4:119--132, 1960.

\bibitem{Kaimanovich}
Vadim~A. Kaimanovich.
\newblock Ergodicity of harmonic invariant measures for the geodesic flow on
  hyperbolic spaces.
\newblock {\em J. Reine Angew. Math.}, 455:57--103, 1994.

\bibitem{Kallenberg}
Olav Kallenberg.
\newblock {\em Foundatioins of Modern Probability}.
\newblock Springer, New York, second edition, 2002.

\bibitem{Kawata}
Tatsuo Kawata.
\newblock {\em Fourier analysis in probability theory}.
\newblock Academic Press, New York-London, 1972.

\bibitem{Kemperman}
J.~H.~B. Kemperman.
\newblock The oscillating random walk.
\newblock {\em Stochastic Process. Appl.}, 2:1--29, 1974.

\bibitem{Kesten}
Harry Kesten.
\newblock The limit points of a normalized random walk.
\newblock {\em Ann. Math. Statist.}, 41:1173--1205, 1970.

\bibitem{Knight}
Frank~B. Knight.
\newblock On the absolute difference chains.
\newblock {\em Z. Wahrsch. Verw. Gebiete}, 43:57--63, 1978.

\bibitem{Lin}
M.~Lin.
\newblock Conservative {M}arkov processes on a topological space.
\newblock {\em Israel J. Math.}, 8:165--186, 1970.

\bibitem{Lyons}
Terry Lyons.
\newblock A simple criterion for transience of a reversible {M}arkov chain.
\newblock {\em Ann. Probab.}, 11:393--402, 1983.

\bibitem{Menshikov+}
Mikhail Menshikov, Serguei Popov, and Andrew Wade.
\newblock {\em Non-homogeneous random walks}.
\newblock Cambridge University Press, Cambridge, 2017.

\bibitem{MeynTweedie}
Sean Meyn and Richard~L. Tweedie.
\newblock {\em Markov chains and stochastic stability}.
\newblock Cambridge University Press, Cambridge, second edition, 2009.

\bibitem{MijatovicVysotsky}
Aleksandar Mijatovi\'c and Vladislav Vysotsky.
\newblock Stability of overshoots of zero-mean random walks.
\newblock {\em Accepted in Electron. J. Probab.}, 2020.

\bibitem{Murdoch}
B.~H. Murdoch.
\newblock Wiener's tests for atomic {M}arkov chains.
\newblock {\em Illinois J. Math.}, pages 35--56, 1968.

\bibitem{Ornstein}
Donald~S. Ornstein.
\newblock Random walks. {I}.
\newblock {\em Trans. Amer. Math. Soc.}, 138:1--43, 1969.

\bibitem{PeigneWoessUnpub}
Marc Peign\'e and Wolfgang Woess.
\newblock On recurrence of reflected random walk on the half-line. {W}ith an
  appendix on results of martin benda.
\newblock 2006.
\newblock Unpublished, arXiv:math/0612306.

\bibitem{PeigneWoess}
Marc Peign\'e and Wolfgang Woess.
\newblock Stochastic dynamical systems with weak contractivity properties {I}.
  {S}trong and local contractivity.
\newblock {\em Colloq. Math.}, 125:31--54, 2011.

\bibitem{Perkins}
E.~Perkins.
\newblock Weak invariance principles for local time.
\newblock {\em Z. Wahrscheinlichkeitstheorie verw. Gebiete}, 60:437--451, 1982.

\bibitem{PerkinsPrivate}
E.~Perkins.
\newblock Private communication.
\newblock 2017.

\bibitem{Revuz}
D.~Revuz.
\newblock {\em Markov chains}.
\newblock North-Holland Publishing Co., Amsterdam, second edition, 1984.

\bibitem{Shepp}
L.~A. Shepp.
\newblock Recurrent random walks with arbitrarily large steps.
\newblock {\em Bull. Amer. Math. Soc.}, 70:540--542, 1964.

\bibitem{Skorokhod}
A.~V. Skorokhod.
\newblock Topologically recurrent {M}arkov chains. {E}rgodic properties.
\newblock {\em Teor. Veroyatnost. i Primenen.}, 31:641--650, 1986.

\bibitem{Szarek}
Tomasz Szarek.
\newblock Feller processes on nonlocally compact spaces.
\newblock {\em Ann. Probab.}, 34:1849--1863, 2006.

\bibitem{UchiyamaGreen}
K\^ohei Uchiyama.
\newblock Green's functions for random walks on {${\bf Z}^N$}.
\newblock {\em Proc. London Math. Soc. (3)}, 77:215--240, 1998.

\bibitem{Uchiyama}
K\^ohei Uchiyama.
\newblock Wiener's test for random walks with mean zero and finite variance.
\newblock {\em Ann. Probab.}, 26:368--376, 1998.

\bibitem{Vysotsky2010}
Vladislav Vysotsky.
\newblock On the probability that integrated random walks stay positive.
\newblock {\em Stochastic Process. Appl.}, 120:1178--1193, 2010.

\bibitem{Vysotsky2014}
Vladislav Vysotsky.
\newblock Positivity of integrated random walks.
\newblock {\em Ann. Inst. Henri Poincar\'e Probab. Stat.}, 50(1):195--213,
  2014.

\bibitem{Vysotsky2018}
Vladislav Vysotsky.
\newblock Stability of random walks with switch at zero.
\newblock {\em In progress}, 2020.

\bibitem{Zweimuller}
Roland Zweim\"uller.
\newblock Hopf's ratio ergodic theorem by inducing.
\newblock {\em Colloq. Math.}, 101:289--292, 2004.

\end{thebibliography}

\end{document}